\newcommand\real{\mathbb{R}}
\newcommand\nat{\mathbb{N}}
\newcommand{\vect}[1]{\boldsymbol{#1}}
\newcommand\mesh{\mathcal{T}_{h}}
\newcommand\face[1][]{\mathcal{F}_{h}^{\mathcal{#1}}}
\newcommand\MESH{\mathcal{T}_{H}}
\newcommand\FACE[1][]{\mathcal{F}_{H}^{\mathcal{#1}}}
\newcommand\ele{\kappa}
\newcommand\ELE{K}
\newcommand{\COVERELE}{\mathcal{K}}
\DeclarePairedDelimiter{\abs}{\lvert}{\rvert}
\DeclarePairedDelimiter{\norm}{\lVert}{\rVert}
\newcommand\dgspace{V_{hp}(\mesh,\vect{p})}
\newcommand\DGSPACE{V_{HP}(\MESH,\vect{P})}
\newcommand{\Ah}{A_{hp}}
\newcommand{\Fh}{F_{hp}}
\newcommand{\AH}{A_{HP}}
\newcommand{\AV}{\widetilde{A}_{HP}}
\newcommand{\FH}{F_{HP}}
\newcommand{\uh}{u_{hp}}
\newcommand{\uH}{u_{HP}}
\newcommand{\ut}{u_{2G}}
\newcommand{\vh}{v_{hp}}
\newcommand{\vH}{v_{HP}}
\newcommand{\wH}{w_{HP}}
\newcommand{\ip}{\sigma_{hp}}
\newcommand{\IP}{\sigma_{HP}}
\newcommand{\dx}{\, \mathrm{d}\vect{x}}
\newcommand{\ds}{\, \mathrm{d}s}
\newcommand{\avg}[1]{\{\!\!\{ #1 \}\!\!\}}
\newcommand{\jump}[1]{[\![ #1 ]\!]}
\newcommand\LProj{\vect{\Pi}_{L^2}}
\DeclareMathOperator{\diam}{diam}
\newcommand{\configfigure}{\captionsetup[subfloat]{farskip=0pt,captionskip=5pt}\centering}
\theoremstyle{plain}
\newtheorem{theorem}{Theorem}[section]
\newtheorem{lemma}[theorem]{Lemma}
\newtheorem{corollary}[theorem]{Corollary}
\theoremstyle{remark}
\newtheorem{remark}[theorem]{Remark}
\theoremstyle{definition}
\newtheorem{assumption}[theorem]{Assumption}
\newtheorem{definition}[theorem]{Definition}
\newtheorem{algo}[theorem]{Algorithm}
\newcommand{\labelref}[2]{#1~\ref{#2}}
\newcommand{\secref}[1]{\labelref{Sect.}{#1}}
\newcommand{\lemmaref}[1]{\labelref{Lemma}{#1}}
\newcommand{\theoremref}[1]{\labelref{Theorem}{#1}}
\newcommand{\algoref}[1]{\labelref{Algorithm}{#1}}
\newcommand{\assumpref}[1]{\labelref{Assumption}{#1}}
\newcommand{\defref}[1]{\labelref{Definition}{#1}}
\newcommand{\figref}[1]{\labelref{Fig.}{#1}}
\newcommand{\subfigref}[2]{\figref{#1}\subref{#1:#2}}
\title{Two-grid $hp$-version discontinuous Galerkin finite element methods for quasilinear elliptic PDEs on agglomerated coarse meshes}
\author{Scott Congreve\thanks{Charles University, Faculty of Mathematics and Physics, Department of Numerical Mathematics, Sokolovsk\'a 83,  18675 Prague, Czech Republic. \texttt{congreve@karlin.mff.cuni.cz}} \and %
Paul Houston\thanks{School of Mathematical Sciences, University of Nottingham, University Park, Nottingham, NG7 2RD, UK. \texttt{Paul.Houston@nottingham.ac.uk}}}
\date{}
\begin{document}
\maketitle
    
\begin{abstract}
This article considers the extension of two-grid $hp$-version discontinuous Galerkin finite element methods for the numerical approximation of second-order quasilinear elliptic boundary value problems of monotone type to the case when agglomerated polygonal/polyhedral meshes are employed for the coarse mesh approximation. We recall that within the two-grid setting, while it is necessary to solve a nonlinear problem on the coarse approximation space, only a linear problem must be computed on the original fine finite element space. In this article, the coarse space will be constructed by agglomerating elements from the original fine mesh. Here, we extend the existing \emph{a priori} and \emph{a posteriori} error analysis for the two-grid $hp$-version discontinuous Galerkin finite element method from \cite{CongreveTG} for coarse meshes consisting of standard element shapes to include arbitrarily agglomerated coarse grids. Moreover, we develop an $hp$-adaptive two-grid algorithm to adaptively design the fine and coarse finite element spaces; we stress that this is undertaken in a fully automatic manner, and hence can be viewed as blackbox solver. Numerical experiments are presented for two- and three-dimensional problems to demonstrate the computational performance  of the proposed $hp$-adaptive two-grid method.
\\

\noindent\textbf{Keywords:}\quad discontinuous Galerkin finite element methods, polytopic elements, $hp$-finite element methods, two-grid methods, quasilinear PDEs
\\

\noindent\textbf{Mathematics Subject Classification (2010):}\quad 35J62, 65N30, 65N50
\end{abstract}

\section{Introduction}

In recent years there has been a tremendous amount of interest in the mathematical development and application of discretisation methods for the numerical approximation of partial differential equations (PDEs) which employ general polygonal/polyhedral (collectively referred to as polytopic) meshes; see, for example, \cite{dg_cfes_2012,Bassi2014,MimeticBook2014,Cangiani,cangiani2013hp,UnifiedVEM,di2015hybrid,Fries:Belytschko:2009,hackbusch_sauter_cfe_nm}, and the references cited therein. The exploitation of such general  meshes is highly advantageous for the efficient approximation of localized geometrical features present in the underlying geometry. Indeed, in many geophysical and engineering applications, the numerical study of fluid-structure interaction, crack and wave propagation phenomena, and flow in fractured porous media, for example, are typically characterized by a strong complexity of the physical domain, cf. \cite{Antonietti2021}. Furthermore, the ability to utilise polytopic meshes offers a number of advantages also in the context of multilevel linear solvers, such as Schwarz-based domain decomposition preconditioners and multigrid solvers, see, for example, \cite{AnHoHuSaVe2017,AnPe2018,AHPS_2020}, and the references cited therein. In this context, embedded coarse meshes can very easily be constructed using mesh agglomeration techniques. Here, collections of elements present in the original fine mesh are `glued' together to form coarse polytopic elements; a very simple approach to define these coarse elements is to employ graph partitioning software, such as METIS \cite{Karypis1999}, for example. 

In the present paper we consider the application of polytopic meshes to design black box two-grid methods for the numerical approximation of nonlinear PDE problems. Two-grid methods were originally introduced by \cite{Xu_92,Xu.J1994,Xu.J1996b} in the context of standard Galerkin finite element methods for both non-symmetric linear and nonlinear problems, cf., for example, \cite{Axelsson_Layton_1996,Bi2007,BiGinting,Wheeler98atwo-grid, Marion_Xu_1995,Utnes_1997,Wu_Allen_1999}, and the references cited therein. Extensions to discontinuous Galerkin methods (DGFEMs) have been undertaken in \cite{BiGinting,CongreveNonNewtonTG,CongreveTG}, for example. Indeed, in our own previous work \cite{CongreveNonNewtonTG,CongreveTG} we have studied both the \emph{a priori} and \emph{a posteriori} error analysis for the two-grid variant of the $hp$-version interior penalty DGFEM for the numerical solution of strongly monotone second order quasilinear elliptic partial differential equations on so called standard meshes; by this we mean meshes comprising of simplicial, quadrilateral, and hexahedral elements. 

We recall that the construction of two-grid methods for nonlinear PDE problems relies on the definition of a coarse finite element space $X$ and fine space $Y$, where the coarse space is, hopefully, considerably coarser compared to the fine space. The method first solves the (expensive) full nonlinear problem on the coarse space $X$ before utilizing this solution to linearize the underlying PDE problem on the fine space $Y$. Obtaining a solution on the fine space then only requires solving a \emph{linear} problem, which is computationally cheaper than solving the full nonlinear problem. Previous work on two-grid methods generally assume that coarse and fine meshes can be easily constructed based on employing standard shaped elements in such a manner that $X\subset Y$; i.e., that the coarse mesh is embedded within the fine one. In practice, it may be necessary to construct a coarse mesh from an unstructured fine mesh, in which case, this condition may be difficult to satisfy when standard elements are employed. To that end, we shall consider development of two-grid methods whereby the coarse mesh is constructed by agglomerating fine mesh elements. In this way, the agglomerated coarse elements will consist of general polytopic elements. In particular, in this article, we study the $hp$-version of the two-grid incomplete interior penalty DGFEM. Here we generalise the analysis presented in \cite{CongreveTG} to the current setting, whereby the coarse mesh may be constructed via agglomeration. Moreover, we develop a general purpose black box two-grid adaptive algorithm which automatically refines both the fine and coarse spaces to ensure that the discretisation error is controlled in a computationally efficient manner.

The outline of this article is as follows. In \secref{section:tg} we introduce the underlying model problem, together with its two-grid $hp$-version DGFEM approximation. Next, in \secref{section:error_analysis} we derive an \emph{a priori} error bound for the proposed numerical scheme. \secref{sec:apost_and_adaptivity} is devoted to the development of $hp$-version adaptive algorithms for automatically refining both the coarse and fine meshes. In \secref{section:numerics}, we perform numerical experiments to demonstrate the performance of the proposed adaptive strategy. Finally, in \secref{sec:concluding_remarks} we summarise the work presented in this article and discuss potential extensions.

\section{Model problem and two-grid \texorpdfstring{$hp$}{hp}-version DGFEM} 
\label{section:tg}

In this section we introduce a model second-order quasilinear elliptic boundary value problem and discuss its numerical approximation based on employing the two-grid DGFEM on a fine mesh comprising of standard  elements, but with a coarse mesh consisting of elements constructed by agglomerating fine elements.

\subsection{Model problem}
In this article we consider the numerical approximation of the following quasilinear elliptic boundary value problem: find $u\in H^1(\Omega)$ such that
\begin{equation}
\label{eqn:quasilinear_eqn}
\begin{aligned}
-\nabla\cdot(\mu(\vect{x},\abs{\nabla u})\nabla u) &= f(\vect{x}) && \text{in }\Omega, \\
u &= 0 && \text{on }\Gamma.
\end{aligned}
\end{equation}
where $\Omega$ is a bounded polygonal/polyhedral Lipschitz domain in $\Omega\subset\real^d$, $d=2,3$ with boundary $\Gamma\coloneqq\partial\Omega$ and $f\in L^2(\Omega)$. 
\begin{assumption}
We assume that the nonlinearity $\mu$ satisfies the following conditions:
\begin{description}
\item[(A1)] $\mu\in C^0(\overline\Omega \times [0,\infty))$ and \label{eqn:mu_continuous}
\item[(A2)] there exists positive constants $m_{\mu}$ and $M_{\mu}$ such that the following monotonicity property is satisfied:
\begin{equation}
\label{eqn:monotonicity}
m_{\mu}(t-s) \leq \mu(\vect{x},t)t - \mu(\vect{x},s)s \leq M_{\mu}(t-s), \qquad t\geq s \geq 0, \vect{x}\in\overline\Omega.
\end{equation}
\end{description}
\end{assumption}
From \cite[Lemma 2.1]{Liu1994} we note that, as $\mu$ satisfies \eqref{eqn:monotonicity}, there exists constants $C_1$ and $C_2$, $C_1\geq C_2 > 0$, such that for all vectors $\vect{v}, \vect{w}\in\real^d$ and all $\vect{x}\in\overline\Omega$,
\begin{align}
\abs{\mu(\vect{x},\abs{\vect{v}})\vect{v}-\mu(\vect{x},\abs{\vect{w}})\vect{w}} &\leq C_1 \abs{\vect{v}-\vect{w}}, \label{eqn:nl_bound:upper}\\
C_2 \abs{\vect{v}-\vect{w}}^2 & \leq (\mu(\vect{x},\abs{\vect{v}})\vect{v}-\mu(\vect{x},\abs{\vect{w}})\vect{w}) \cdot (\vect{v}-\vect{w}). \label{eqn:nl_bound:lower}
\end{align}
For ease of notation we shall suppress the dependence of $\mu$ on $\vect{x}$ and write $\mu(t)$ instead of $\mu(\vect{x},t)$.

\subsection{Meshes, spaces, and trace operators}
Following \cite{Congreve_PhD} we consider a \emph{fine} mesh $\mesh$ which partitions $\Omega\subset\real^d$, $d=2,3$, into disjoint open-element domains $\ele$ such that $\overline\Omega = \bigcup_{\ele\in\mesh} \overline{\ele}$. We assume shape-regularity of the mesh and that each element $\ele\in\mesh$ is an affine image of a reference element $\hat \ele$; i.e, for each $\ele\in\mesh$ there exists an affine mapping $T_{\ele} : \hat \ele \to \ele$ such that $\ele=T_{\ele}(\hat \ele)$, where $\hat \ele$ is the open cube $(-1,1)^3$ in $\real^3$ and either the open triangle $\{(x,y) : -1 < x < 1, -1 < y < -x \}$ or the open square $(-1,1)^2$ in $\real^2$. We denote by $h_{\ele}$ the element diameter of $\ele\in\mesh$ and $\vect{n}_{\ele}$ signifies the unit outward normal vector to $\ele$. We allow the meshes to be 1-irregular, i.e., each face of any one element $\ele\in\mesh$ contains at most one hanging node and each edge of each face contains at most one hanging node. Under this assumption we can construct an auxiliary 1-irregular mesh by subdividing all quadrilateral and hexahedral elements $\ele\in\mesh$ whose edges contain at least one hanging node into $2^d$ sub-elements. We assume that triangular elements are \emph{regularly reducible}, cf. \cite{Ortner2007}, to eliminate hanging nodes in triangular elements on the auxiliary mesh.  
We point out that these conditions are necessary to ensure that Theorem~\ref{theorem:posteriori} holds, cf. \cite{CongreveTG}. Additionally, we note that these assumptions imply that the family $\{\mesh\}_{h>0}$ is of \emph{bounded local variation}, i.e., there exists a constant $\rho_1\geq 1$, independent of element sizes, such that $\rho_1^{-1} \leq \nicefrac{h_{\ele}}{h_{\ele'}} \leq \rho_1$, for any pair of elements $\ele,\ele'\in\mesh$ which share a common face $F=\partial \ele \cap \partial \ele'$.

For a non-negative integer $p$, we denote by $\mathcal{P}_p(\hat \ele)$ the space of polynomials of total degree $p$ on $\hat \ele$. When $\hat \ele$ is a hypercube, we also consider $\mathcal{Q}_p(\hat \ele)$, the set of all tensor-product polynomials on $\hat \ele$ of degree $p$ in each coordinate direction. To each $\ele\in\mesh$ we assign a polynomial degree $p_{\ele}$ and construct the vector $\vect{p} = \{ p_{\ele} : \ele\in\mesh \}$. We suppose that $\vect{p}$ is also of bounded local variation, i.e., there exists a constant $\rho_2\geq 1$, independent of the element sizes and $\vect{p}$, such that, for any pair of neighboring elements $\ele,\ele'\in\mesh$, $\rho_2^{-1} \leq \nicefrac{p_{\ele}}{p_{\ele'}} \leq \rho_2$.
With this notation, we introduce the \emph{fine} finite element space
\[
\dgspace = \{ v\in L^2(\Omega) : v\vert_{\ele} \circ T_{\ele} \in \mathcal{R}_{p_{\ele}}(\hat \ele), \ele\in\mesh \},
\]
where $\mathcal{R}$ is either $\mathcal{P}$ or $\mathcal{Q}$.

Next, we introduce a \emph{coarse} mesh partition $\MESH$, consisting of general polytopes $\ELE$ constructed by agglomerating elements $\ele\in\mesh$ from the fine mesh, such that $\overline{\Omega}=\bigcup_{\ELE\in\MESH}\overline{\ELE}$; i.e., for all $\ELE\in\MESH$, there exists a set $\mesh(\ELE) \subseteq\mesh$ of fine mesh elements such that $\overline{\ELE} = \bigcup_{\ele\in\mesh(\ELE)} \overline{\ele}$, $\mesh = \bigcup_{\ELE\in\MESH} \mesh(\ELE)$, and, for all $\ele\in\mesh$,
\[
\ele\in\mesh(\ELE) \quad \implies \quad \ele\not\in\mesh(\ELE')\quad\forall \ELE'\in\MESH\setminus\{\ELE\}.
\]
We denote by $H_{\ele}$ the diameter of the coarse element $K\in\MESH$; i.e., $H_K=\diam(K)$. To each $\ELE\in\MESH$ we assign a polynomial degree $P_{\ELE}$ and construct the vector $\vect{P} = \{ P_{\ELE} : \ELE\in\MESH \}$. We also assume that the polynomial degree of the coarse mesh element is less than or equal to the polynomial degree of its constituent fine mesh elements; i.e., $P_{\ELE} \leq p_{\ele}$ for all $\ELE\in\MESH$ and $\ele\in\mesh(\ELE)$. We can then introduce the finite element space on the coarse mesh by
\[
\DGSPACE = \{ v\in L^2(\Omega) : v\vert_{\ELE} \in \mathcal{P}_{P_{\ELE}}(\ELE), \ELE\in\MESH \}.
\]
We note that due to the assumptions on the polynomial degree that $\DGSPACE\subseteq\dgspace$.

We shall now define some suitable face operators required for the definition of the proposed DGFEM. To this end we denote by $\face[I]$ the set of all interior faces of the fine mesh partition $\mesh$ of $\Omega$, and by $\face[B]$ the set of all boundary faces of the fine mesh $\mesh$. Additionally, we let $\face=\face[I]\cup\face[B]$ denote the set of all faces in the mesh $\mesh$. We similarly denote by $\FACE[I]$, $\FACE[B]$, and $\FACE=\FACE[I]\cup\FACE[B]$ the faces on the coarse mesh $\MESH$ following \cite{Cangiani}. Due to the construction of the coarse mesh via agglomeration of fine mesh elements we note that $\FACE\subset\face$, $\FACE[I]\subset\face[I]$, and $\FACE[B]\subset\face[B]$.

Let $v$ and $\vect{q}$ be scalar- and vector-valued functions, respectively, which are smooth inside each element $\ele\in\mesh$. Given two adjacent elements, $\ele^+,\ele^-\in\mesh$ which share a common face $F\in\face[I]$, i.e., $F=\partial\ele^+\cap\partial\ele^-$, we write $v^\pm$ and $\vect{q}^\pm$ to denote the traces of the functions $v$ and $\vect{q}$, respectively, on the faces $F$, taken from the interior of $\ele^\pm$, respectively. Using this notation, we define the averages of $v$ and $\vect{q}$ at $\vect{x}\in F$ by
\[
\avg{v} = \frac12(v^++v^-), \qquad
\avg{\vect{q}}=\frac12(\vect{q}^++\vect{q}^-),
\]
respectively. Similarly, we define the jumps of $v$ and $\vect{q}$ at $\vect x\in F$ as
\[
\jump{v} = v^+\vect{n}_{\ele^+}+v^-\vect{n}_{\ele^-}, \qquad 
\jump{\vect q} = \vect{q}^+\cdot\vect{n}_{\ele^+}+\vect{q}^-\cdot\vect{n}_{\ele^-},
\]
respectively. On a boundary face $F\in\face[B]$, we set $\avg{v}=v$, $\avg{\vect q}=\vect q$, $\jump{v}=v\vect{n}$, and $\jump{\vect q}=\vect{q}\cdot\vect{n}$, where $\vect n$ denotes the unit outward normal vector on $\Gamma$. We define $\avg{\,\cdot\,}$ and $\jump{\,\cdot\,}$ analogously on $\FACE$.

For a face $F\in\face$ of the fine mesh, we define $h_F$ to be the diameter of the face and the face polynomial degree $p_F$ to be defined by
\[
p_F =
\begin{cases}
    \max(p_{\ele},p_{\ele'}), & \text{if } F=\partial \ele \cap \partial \ele' \in\face[I], \\
    p_{\ele},& \text{if } F=\partial \ele \cap \Gamma \in\face[B].
\end{cases}
\]

\subsection{DGFEM discretization}

With the notation introduced in the previous section we first define, for comparison with the proposed two-grid method (see below), 
the following \emph{standard} DGFEM, based on employing an incomplete interior penalty formulation, on the fine space $\dgspace$, for the numerical approximation of the problem 
\eqref{eqn:quasilinear_eqn}: find $\uh\in\dgspace$ such that
\begin{equation}
\label{eqn:standard_fem}
\Ah(\uh;\uh,\vh) = \Fh(\vh)
\end{equation}
for all $\vh\in\dgspace$, where
\begin{align*}
\Ah(\phi;u,v) &= \sum_{\ele\in\mesh} \int_{\ele} \mu(\abs{\nabla\phi})\nabla u\cdot\nabla v\dx
    -\sum_{F\in\face} \int_F \avg{ \mu(\abs{\nabla_h\phi}) \nabla_h u}\cdot\jump{v} \ds \\
    &\qquad+ \sum_{F\in\face} \int_F \ip \jump{u}\cdot\jump{v}\ds, \\
\Fh(v) &= \sum_{\ele\in\mesh} \int_{\ele} fv\dx,
\end{align*}
and $\nabla_h$ is used to denote the broken gradient operator, defined element-wise. Here, the \emph{fine grid interior penalty parameter} $\ip$ is defined as 
\begin{equation}
\label{eqn:ip:fine}
\ip=\gamma_{hp} \frac{p_F^2}{h_F},
\end{equation}
where $\gamma_{hp}>0$ is a sufficiently large constant; cf. \lemmaref{lemma:coercivity} below.

On the class of spaces $H^1(\Omega)+\dgspace$, we introduce the following DGFEM \emph{energy norm}
\begin{equation}\label{eqn:norm}
\norm{v}_{hp}^2 = \norm{\nabla_h v}_{L^2(\Omega)}^2 + \sum_{F\in\face} \int_F \ip \abs{\jump{v}}^2 \ds.
\end{equation}
Following \cite{CongreveTG}, we recall that the form $\Ah(\phi;\cdot,\cdot)$, $\phi\in\dgspace$, is coercive, in the sense that the following lemma holds.
\begin{lemma}
\label{lemma:coercivity}
There exists a positive constant $\gamma_{\min}$, such that for any $\gamma_{hp}>\gamma_{\min}$, there exists a coercivity constant $C_c > 0$, independent of $h$ and $\vect{p}$, such that
\[
\Ah(\phi;v,v) \geq C_c\norm{v}_{hp}^2
\]
for all $\phi,v\in\dgspace$.
\end{lemma}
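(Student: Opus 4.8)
The plan is to establish the bound term by term after setting $u=v$ in the definition of $\Ah$, so that
\begin{align*}
\Ah(\phi;v,v) &= \sum_{\ele\in\mesh}\int_\ele \mu(\abs{\nabla\phi})\abs{\nabla v}^2\dx - \sum_{F\in\face}\int_F \avg{\mu(\abs{\nabla_h\phi})\nabla_h v}\cdot\jump{v}\ds \\
&\qquad + \sum_{F\in\face}\int_F\ip\abs{\jump{v}}^2\ds \eqqcolon I - II + III.
\end{align*}
First I would extract pointwise bounds on the nonlinearity from Assumption (A2): setting $s=0$ in \eqref{eqn:monotonicity} (and invoking continuity (A1) at $t=0$) yields $m_{\mu} \leq \mu(\vect{x},t) \leq M_{\mu}$ for all $t\geq 0$ and $\vect{x}\in\overline\Omega$; equivalently, these follow from \eqref{eqn:nl_bound:upper}--\eqref{eqn:nl_bound:lower} with $\vect{w}=\vect{0}$. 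Consequently $I \geq m_{\mu}\norm{\nabla_h v}_{L^2(\Omega)}^2$, while $III = \sum_{F\in\face}\int_F\ip\abs{\jump{v}}^2\ds$ is exactly the jump contribution to $\norm{v}_{hp}^2$.

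The heart of the argument is to absorb the consistency term $II$ into $I$ and $III$. Applying the Cauchy--Schwarz inequality facewise, inserting the weight $\ip^{\pm 1/2}$, and using the upper bound $\mu\leq M_{\mu}$ together with the elementary estimate $\abs{\avg{\vect{q}}}^2\leq\tfrac12(\abs{\vect{q}^+}^2+\abs{\vect{q}^-}^2)$, I would bound, for any $\varepsilon>0$,
\[
\abs{II} \leq \frac{1}{2\varepsilon}\sum_{F\in\face}\ip^{-1}\norm{\avg{\mu(\abs{\nabla_h\phi})\nabla_h v}}_{L^2(F)}^2 + \frac{\varepsilon}{2}\,III.
\]
The first sum is controlled by the $hp$-version discrete trace (inverse) inequality $\norm{\nabla v}_{L^2(\partial\ele)}^2 \leq C_{\mathrm{tr}}\,p_\ele^2 h_\ele^{-1}\norm{\nabla v}_{L^2(\ele)}^2$, valid on the standard shape-regular fine elements $\ele\in\mesh$. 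Recalling $\ip = \gamma_{hp}\,p_F^2/h_F$ with $p_F=\max(p_{\ele^+},p_{\ele^-})$, and using the bounded local variation of both $\{h_\ele\}$ (constant $\rho_1$) and $\vect{p}$ (constant $\rho_2$) to compare $p_\ele^2/h_\ele$ against $p_F^2/h_F$ across a shared face, the weight $\ip^{-1}p_\ele^2/h_\ele$ is bounded by $C/\gamma_{hp}$ with $C=C(\rho_1,\rho_2,C_{\mathrm{tr}})$. Summing over faces and reorganising the sum over elements -- here the uniformly bounded number of faces per standard element is used -- gives
\[
\sum_{F\in\face}\ip^{-1}\norm{\avg{\mu(\abs{\nabla_h\phi})\nabla_h v}}_{L^2(F)}^2 \leq \frac{C_\ast M_{\mu}^2}{\gamma_{hp}}\norm{\nabla_h v}_{L^2(\Omega)}^2.
\]

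Combining the three estimates yields
\[
\Ah(\phi;v,v) \geq \Big(m_{\mu} - \frac{C_\ast M_{\mu}^2}{2\varepsilon\gamma_{hp}}\Big)\norm{\nabla_h v}_{L^2(\Omega)}^2 + \Big(1-\frac{\varepsilon}{2}\Big)\sum_{F\in\face}\int_F\ip\abs{\jump{v}}^2\ds,
\]
and choosing, say, $\varepsilon=1$ and then $\gamma_{\min} \coloneqq C_\ast M_{\mu}^2/(2m_{\mu})$ guarantees that for every $\gamma_{hp}>\gamma_{\min}$ both coefficients are strictly positive; setting $C_c$ equal to their minimum gives the claim, with $C_c$ independent of $h$ and $\vect{p}$ since every constant entering it depends only on $m_{\mu}$, $M_{\mu}$, $\rho_1$, $\rho_2$, $C_{\mathrm{tr}}$ and $\gamma_{hp}$. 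The main obstacle -- and the only place the geometry really enters -- is the facewise trace estimate and the passage from a sum over faces to a sum over elements: this is precisely where shape-regularity of the fine mesh, the scaling $\ip\sim p_F^2/h_F$, and the bounded-local-variation hypotheses are indispensable, ensuring that $C_\ast$ (and hence $\gamma_{\min}$) is uniform across the whole family $\{\mesh\}_{h>0}$. Note that coercivity is asserted only on the fine space $\dgspace$ built from standard elements, so the agglomeration of the coarse mesh plays no role here and the argument is identical to the one in \cite{CongreveTG}.
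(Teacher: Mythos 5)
Your argument is correct and is precisely the standard coercivity proof for the $hp$-version interior penalty form that the paper itself does not reproduce but simply recalls from \cite{CongreveTG}: pointwise bounds $m_{\mu}\leq\mu\leq M_{\mu}$ from (A2), Cauchy--Schwarz with the weight $\ip^{\pm\nicefrac12}$, the discrete trace inequality on standard shape-regular elements, and bounded local variation to absorb the consistency term for $\gamma_{hp}$ large enough. No gaps; your closing remark that agglomeration plays no role here (since the lemma concerns only the fine space) is also the reason the paper can quote the result verbatim from the standard-mesh setting.
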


Finally, we introduce the following $hp$-version of the two-grid DGFEM approximation to \eqref{eqn:quasilinear_eqn} based on employing the fine and coarse finite element spaces $\dgspace$ and $\DGSPACE$, respectively, cf. \cite[Algorithm 1]{BiGinting} and \cite[Sect. 2.3]{CongreveTG}:
\begin{enumerate}
\item (Nonlinear solve) Compute the coarse grid approximation $\uH\in\DGSPACE$ such that
    \begin{equation}
    \label{eqn:tg_fem:coarse}
    \AH(\uH; \uH,\vH) = \FH(\vH)
    \end{equation}
    for all $\vH\in\DGSPACE$.
\item (Linear solve) Determine the fine grid solution $\ut\in\dgspace$ such that
    \begin{equation}
    \label{eqn:tg_fem:fine}
    \Ah(\uH;\ut,\vh) = \Fh(\vh)
    \end{equation}
    for all $\vh\in\dgspace$.
\end{enumerate}
Here, we note that $\FH(\cdot)$ is defined analogously to $\Fh(\cdot)$ and $\AH(\cdot; \cdot,\cdot)$ is defined on the coarse mesh partition $\MESH$ analogously to $\Ah(\cdot; \cdot,\cdot)$, but with a different \emph{coarse mesh interior penalty parameter} $\IP$; the definition of $\IP$ is given below in \eqref{eqn:ip:coarse}.

\begin{remark}
	While the above DGFEM formulation is based on employing the incomplete interior penalty method, we stress that the proceeding error analysis naturally generalises to other DGFEMs commonly employed within the literature.
\end{remark}

\subsection{Inverse estimates and approximation results for the coarse space}

Before embarking on the error analysis of the two-grid DGFEM defined in \eqref{eqn:tg_fem:coarse}--\eqref{eqn:tg_fem:fine}, we first derive some preliminary results. In particular, we revisit some inverse estimates and polynomial approximation results which are valid on general polytopic elements and are hence required to analyze the coarse grid approximation \eqref{eqn:tg_fem:coarse}. To this end, we first introduce the necessary definitions and assumptions from \cite[Section 3.2 \& 4.3]{Cangiani} required for the inverse inequality \lemmaref{lemma:inverse_ineq}.

\begin{definition}\label{def:element_simplices}
For each element $\ELE\in\MESH$ we define the family $\mathcal{F}_{\flat}^{\ELE}$ of all possible $d$-dimensional simplices contained in $\ELE$ and having at least one face in common with $\ELE$. Moreover, we write $\ELE_{\flat}^{F}$ to denote a simplex belonging to $\mathcal{F}_{\flat}^{\ELE}$ which shares with $\ELE\in\MESH$ the specific face $F\subset\partial\ELE$.
\end{definition}
\begin{assumption}\label{assumption:coarse:shape_regular}
For any $\ELE\in\MESH$, there exists a set of non-overlapping $d$-dimensional simplices $\{\ELE_{\flat}^F\}\subset\mathcal{F}_\flat^{\ELE}$ contained within $\ELE$, such that for all $F\subset\partial\ELE$, the following condition holds
\[
h_{\ELE} \leq C_s \frac{d\abs{\ELE_{\flat}^F}}{\abs{F}},
\]
where $C_s$ is a positive constant, which is independent of the discretization parameters, the number of faces that the element possesses, and the measure of $F$.
\end{assumption}

Equipped with these definitions we state the following inverse inequality from \cite[Lemma 32]{Cangiani}.

\begin{lemma}\label{lemma:inverse_ineq}
Let $\ELE\in\MESH$; then, for all $F\subset\partial\ELE$, assuming \assumpref{assumption:coarse:shape_regular} is satisfied, the inverse inequality
\[
\norm{v}_{L^2(\partial K)}^2 \leq C_s C_{\textrm{INV}}d\frac{P^2}{H_{\ELE}} \norm{v}_{L^2(\ELE)}^2
\]
holds, for each $v\in\mathcal{P}_P(\ELE)$, where $C_s$ is the constant from \assumpref{assumption:coarse:shape_regular} which is independent of $\nicefrac{\abs{\ELE}}{\sup_{\ELE_{\flat}^F\subset\ELE}\abs{\ELE_{\flat}^F}}$, $\abs{F}$, $P$, and $v$; moreover, $C_{\textrm{INV}}$ is a positive constant arising from a standard inverse inequality on simplices, and is independent of $v$, $P$, and $H_{\ELE}$.
\end{lemma}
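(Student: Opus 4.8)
The plan is to reduce the boundary estimate on the polytope $\ELE$ to a collection of standard inverse trace inequalities posed on the individual simplices $\ELE_\flat^F$ furnished by \assumpref{assumption:coarse:shape_regular}, and then to sum the resulting face contributions. First I would split the boundary norm face-by-face, writing $\norm{v}_{L^2(\partial\ELE)}^2 = \sum_{F\subset\partial\ELE}\norm{v}_{L^2(F)}^2$, so that it suffices to estimate each single-face term $\norm{v}_{L^2(F)}^2$ individually.

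The heart of the argument is a sharp inverse trace inequality on a single simplex. For the simplex $S=\ELE_\flat^F\subset\ELE$ sharing the face $F$ with $\ELE$, the restriction $v|_S$ lies in $\mathcal{P}_P(S)$, and I would establish
\[
\norm{v}_{L^2(F)}^2 \leq C_{\textrm{INV}}\, P^2\, \frac{\abs{F}}{\abs{S}}\, \norm{v}_{L^2(S)}^2,
\]
where $C_{\textrm{INV}}$ depends only on $d$ and is otherwise independent of the shape and size of $S$. I would prove this in two steps: first, on a fixed reference simplex $\hat{S}$ with a distinguished face $\hat{F}$, invoke the classical polynomial inverse trace inequality $\norm{\hat{v}}_{L^2(\hat{F})}^2 \leq c(d)\,(P+1)(P+d)\,\norm{\hat{v}}_{L^2(\hat{S})}^2$ for $\hat{v}\in\mathcal{P}_P(\hat{S})$ (e.g.\ the Warburton--Hesthaven estimate), which already supplies the $\mathcal{O}(P^2)$ growth; second, transfer this to the physical simplex $S$ via an affine map $\hat{S}\to S$ carrying $\hat{F}$ to $F$, tracking the Jacobian factors. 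The crucial point, and what I expect to be the main obstacle, is that the mapping must be arranged so that only the ratio $\abs{F}/\abs{S}$ (equivalently the height $d\abs{S}/\abs{F}$ of $S$ above $F$) enters the constant, with no shape-regularity measure of $S$ appearing; this is essential because the simplices produced by agglomeration may be arbitrarily anisotropic or degenerate. This shape-independence is obtained by choosing coordinates aligned with $F$ and its opposite vertex, so that the affine transformation decouples into a volume-preserving shear within the hyperplane of $F$, which leaves the norms invariant up to area/volume scaling, and a one-dimensional stretch in the height direction, which contributes precisely the factor $\abs{F}/\abs{S}$.

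With the per-simplex estimate in hand, I would apply \assumpref{assumption:coarse:shape_regular} in the rearranged form $\abs{F}/\abs{\ELE_\flat^F}\leq C_s d/H_{\ELE}$, obtained directly from $H_{\ELE}\leq C_s\,d\,\abs{\ELE_\flat^F}/\abs{F}$, to deduce, for each face $F\subset\partial\ELE$,
\[
\norm{v}_{L^2(F)}^2 \leq C_s C_{\textrm{INV}}\, d\, \frac{P^2}{H_{\ELE}}\, \norm{v}_{L^2(\ELE_\flat^F)}^2.
\]
Finally, summing over all faces and exploiting the fact that the simplices $\{\ELE_\flat^F\}$ are non-overlapping and contained in $\ELE$ (again by \assumpref{assumption:coarse:shape_regular}), I obtain $\sum_{F\subset\partial\ELE}\norm{v}_{L^2(\ELE_\flat^F)}^2 \leq \norm{v}_{L^2(\ELE)}^2$, which collapses the right-hand side to the single bound $C_s C_{\textrm{INV}}\,d\,(P^2/H_{\ELE})\norm{v}_{L^2(\ELE)}^2$, completing the proof. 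The only genuinely delicate ingredient remains the transfer step above: guaranteeing that $C_{\textrm{INV}}$ is blind to the geometry of the possibly sliver-like simplices $\ELE_\flat^F$, since this is exactly the feature that allows the estimate to hold uniformly over arbitrarily agglomerated coarse elements.
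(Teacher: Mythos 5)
Your argument is correct and is essentially the proof given in the source the paper cites for this result (the paper itself offers no proof of Lemma~\ref{lemma:inverse_ineq}, simply quoting it from \cite[Lemma 32]{Cangiani}): a single-face Warburton--Hesthaven-type inverse trace inequality on each simplex $\ELE_{\flat}^{F}$, the bound $\abs{F}/\abs{\ELE_{\flat}^{F}}\leq C_s d/H_{\ELE}$ from \assumpref{assumption:coarse:shape_regular}, and summation over the non-overlapping simplices contained in $\ELE$. The one step you flag as delicate is in fact automatic: for \emph{any} affine bijection of a reference simplex onto $\ELE_{\flat}^{F}$ carrying the reference face onto $F$, the face and volume integrals scale by exactly $\abs{F}/\abs{\hat F}$ and $\abs{\ELE_{\flat}^{F}}/\abs{\hat S}$ respectively, so only the ratio $\abs{F}/\abs{\ELE_{\flat}^{F}}$ can enter and no shape-regularity of the (possibly degenerate) simplex is needed.
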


We now turn our attention to the derivation of suitable $hp$--version approximation results for the coarse finite element space. To this end, we define a covering for the coarse mesh as follows; cf. \cite[Definition 17]{Cangiani}.
\begin{definition}\label{def:covering}
We define the \emph{covering} $\MESH^\sharp=\{\COVERELE\}$ related to the coarse mesh $\MESH$ as a set of open shape-regular $d$-simplices $\COVERELE$, such that, for each $\ELE\in\MESH$, there exists a $\COVERELE\in\MESH^\sharp$, such that $\ELE\subset\COVERELE$. 
\end{definition}

With this notation, we make the following assumption.
\begin{assumption}\label{assumption:coarse:covering}
We assume there exists a covering such that $h_{\COVERELE} \coloneqq \diam(\COVERELE) \leq C_D H_{\ELE}$, for each pair $\ELE\in\MESH$, $\COVERELE\in\MESH^\sharp$, with $\ELE\subset\COVERELE$, for a constant $C_D>0$, uniformly with respect to the mesh size.
\end{assumption}

Furthermore, we require the definition of the following classical extension operator, cf. \cite{Stein}.
\begin{theorem} \label{thm-extension}
Let $D$ be a domain with minimally smooth boundary $\partial D$. Then, there exists a linear operator $\mathfrak{E} : H^s(D)\to H^s(\real^d), s \in\nat_0$, such that $\mathfrak{E}v\vert_D =v$ and
\[
    \norm{\mathfrak{E}v}_{H^s(\real^d)} \leq C_{\mathfrak{E}} \norm{v}_{H^s(D)},
\]
where $C_\mathfrak{E}$ is a positive constant depending only on $s$ and parameters which characterize the
boundary $\partial D$.
\end{theorem}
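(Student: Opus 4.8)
The plan is to follow Stein's classical construction, which produces a single \emph{total} extension operator that simultaneously bounds every integer Sobolev order $s$. First I would localize: since $\partial D$ is minimally smooth, it admits a locally finite open cover $\{U_i\}$ of uniformly bounded overlap, on each piece of which, after a rigid motion, $D$ coincides with the region above the graph of a Lipschitz function $\phi$, i.e.\ a \emph{special Lipschitz domain} $D_\phi = \{(x',x_d) : x_d > \phi(x')\}$. Subordinating a partition of unity $\{\chi_i\}$ to $\{U_i\}$ reduces the problem to constructing and estimating an extension on a single $D_\phi$, after which the pieces are glued back together; the bounded-overlap property guarantees that summing the local contributions costs only a fixed multiplicative constant in the $H^s$ norm.

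For the model domain $D_\phi$ the core step is an explicit extension below the graph. Following Stein I would set, for $x_d < \phi(x')$,
\[
(\mathfrak{E}v)(x',x_d) = \int_1^\infty v\bigl(x', x_d + \lambda\, \delta^*(x)\bigr)\, \psi(\lambda)\, \mathrm{d}\lambda,
\]
where $\delta^*$ is a \emph{regularized distance}---a function smooth off $\partial D$, comparable to $\dist(\,\cdot\,,\partial D)$, and with derivatives that scale correctly with the distance---and $\psi$ is a fixed weight on $[1,\infty)$ chosen so that
\[
\int_1^\infty \psi(\lambda)\,\mathrm{d}\lambda = 1, \qquad \int_1^\infty \lambda^k \psi(\lambda)\,\mathrm{d}\lambda = 0 \quad (k=1,2,\ldots),
\]
together with rapid decay $\psi(\lambda)=O(\lambda^{-N})$ for every $N$. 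The normalization makes $\mathfrak{E}v$ match $v$ continuously across $\partial D$, while the infinitely many vanishing moments force all higher-order weak derivatives to match as well, so that no singular part is created on the interface; this is precisely why one and the \emph{same} operator remains bounded on every $H^s$, in contrast to the finite-order reflection extensions.

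The estimate $\norm{\mathfrak{E}v}_{H^s(\real^d)} \leq C_{\mathfrak{E}} \norm{v}_{H^s(D)}$ is then obtained by differentiating the integral formula up to order $s$. Each derivative falling on $\delta^*$ or on the argument $x_d + \lambda\,\delta^*$ produces, via the chain rule, factors of $\lambda$ together with bounded derivative terms of $\delta^*$; the rapid decay of $\psi$ makes the resulting $\lambda$-integrals convergent, and after the change of variables $t = \lambda\,\delta^*(x)$ a combination of Minkowski's integral inequality and Hardy-type estimates bounds the $L^2$ norms of all derivatives of $\mathfrak{E}v$ by $\norm{v}_{H^s(D_\phi)}$, with a constant depending on $s$, on the Lipschitz constant of $\phi$, and on the covering data, but not on $v$. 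Reassembling through the partition of unity and invoking the uniform overlap yields the global bound; linearity of $\mathfrak{E}$ is immediate from the construction.

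I expect the main obstacles to be twofold. First is the construction and careful estimation of the regularized distance $\delta^*$: building it from a Whitney-type decomposition and establishing the scale-invariant derivative bounds $\abs{\partial^\alpha \delta^*(x)} \lesssim \dist(x,\partial D)^{1-\abs{\alpha}}$ underpins the entire norm estimate. Second is verifying that a weight $\psi$ with \emph{all} moments beyond the zeroth vanishing and with rapid decay actually exists---a genuine analytic construction, in which one exhibits such a $\psi$ via a function whose transform vanishes to infinite order at a point. It is exactly this feature that upgrades the classical finite-order extensions to an operator valid for every Sobolev order at once, which is the property we require in order to apply the extension at arbitrary polynomial degrees on the coarse agglomerated elements.
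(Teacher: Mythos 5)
Your sketch is a faithful reconstruction of Stein's classical construction (localization to special Lipschitz domains, the regularized distance, the weight $\psi$ with all higher moments vanishing), which is exactly the source the paper relies on: the paper offers no proof of this theorem, only the citation to \cite{Stein}. The approach is therefore the same as the paper's, and the sketch correctly identifies the two genuinely delicate points (the scale-invariant derivative bounds for $\delta^*$ and the existence of $\psi$).
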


With this notation we state the following approximation result from \cite[Lemmas 23 \& 33]{Cangiani}.
\begin{lemma}\label{lemma:coarse:approximation}
Let $\ELE\in\MESH$ and $\COVERELE\in\MESH^\sharp$ be the corresponding simplex, such that $\ELE\subset\COVERELE$, cf. \defref{def:covering}. Suppose that $v\in L^2(\Omega)$ is such that $\mathfrak{E}v\vert_{\COVERELE}\in H^{L_{\ELE}}(\mathcal{K)}$, for some $L_{\ELE}\geq 0$. Then, given \assumpref{assumption:coarse:covering} is satisfied, there exists $\Pi v \in \DGSPACE$, such that $\Pi v\vert_K \in \mathcal{P}_{P_K}(K)$ and the following bound holds
\[
\norm{v-\Pi v}_{H^q(\ELE)} \leq C_{I,1} \frac{H_{\ELE}^{S_{\ELE}-q}}{P_{\ELE}^{L_{\ELE}-q}}\norm{\mathfrak{E}v}_{H^{L_{\ELE}}(\COVERELE)}, \qquad L_{\ELE} \geq 0,
\]
for $0 \leq q \leq L_{\ELE}$. Furthermore, if $v\in H^1(\Omega)$ and \assumpref{assumption:coarse:shape_regular} is satisfied then the following bound also holds
\[
\norm{v-\Pi v}_{L^2(\partial\ELE)} \leq C_{I,2} \frac{H_{\ELE}^{S_{\ELE}-\nicefrac12}}{P_{\ELE}^{L_{\ELE}-\nicefrac12}}\norm{\mathfrak{E}v}_{H^{L_{\ELE}}(\COVERELE)}, \qquad L_{\ELE} > \nicefrac12.
\]
Here, $S_{\ELE} = \min(P_{\ELE}+1, L_{\ELE})$ and $C_{I,1}$ and $C_{I,2}$ are positive constants which depend on the shape-regularity of $\COVERELE$ and the constant $C_s$ from \assumpref{assumption:coarse:shape_regular}, but are independent of $v$, $H_{\ELE}$, and $P_{\ELE}$.
\end{lemma}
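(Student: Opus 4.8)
The plan is to push the classical $hp$-version approximation theory from the shape-regular covering simplex $\COVERELE$ down onto the polytope $\ELE$, and then to obtain the trace estimate on $\partial\ELE$ by exploiting the sub-simplex structure guaranteed by \assumpref{assumption:coarse:shape_regular}. First, since $\mathfrak{E}v\vert_\COVERELE\in H^{L_\ELE}(\COVERELE)$ and $\COVERELE$ is a shape-regular simplex of diameter $h_\COVERELE$, I would invoke the standard Babu\v{s}ka--Suri-type $hp$-approximation estimate on $\COVERELE$: there exists a polynomial $\widetilde{\Pi}(\mathfrak{E}v)\in\mathcal{P}_{P_\ELE}(\COVERELE)$ with
\[
\norm{\mathfrak{E}v-\widetilde{\Pi}(\mathfrak{E}v)}_{H^q(\COVERELE)}\leq C\,\frac{h_\COVERELE^{S_\ELE-q}}{P_\ELE^{L_\ELE-q}}\,\norm{\mathfrak{E}v}_{H^{L_\ELE}(\COVERELE)},\qquad 0\leq q\leq L_\ELE,
\]
where $S_\ELE=\min(P_\ELE+1,L_\ELE)$ and $C$ depends only on the shape-regularity of $\COVERELE$. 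I then define $\Pi v\coloneqq\widetilde{\Pi}(\mathfrak{E}v)\vert_\ELE$; as $\ELE\subset\COVERELE$ and $\widetilde{\Pi}(\mathfrak{E}v)$ is a single polynomial of degree $P_\ELE$, its restriction lies in $\mathcal{P}_{P_\ELE}(\ELE)$, so $\Pi v\in\DGSPACE$, as required.

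For the interior bound, the extension property $\mathfrak{E}v\vert_\Omega=v$ together with $\ELE\subset\Omega$ gives $v-\Pi v=(\mathfrak{E}v-\widetilde{\Pi}(\mathfrak{E}v))\vert_\ELE$ on $\ELE$, so monotonicity of the $H^q$-norm under the inclusion $\ELE\subset\COVERELE$ yields $\norm{v-\Pi v}_{H^q(\ELE)}\leq\norm{\mathfrak{E}v-\widetilde{\Pi}(\mathfrak{E}v)}_{H^q(\COVERELE)}$. Combining this with the simplex estimate and the covering bound $h_\COVERELE\leq C_D H_\ELE$ of \assumpref{assumption:coarse:covering} produces the first inequality, with $C_{I,1}$ absorbing $C$, $C_D^{S_\ELE-q}$, and the shape-regularity constants.

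The main obstacle is the trace estimate, since $\partial\ELE$ is the boundary of an arbitrary polytope carrying possibly many faces of disparate sizes, and no polynomial inverse trace inequality such as \lemmaref{lemma:inverse_ineq} is available because $w\coloneqq\mathfrak{E}v-\widetilde{\Pi}(\mathfrak{E}v)$ is not a polynomial. The remedy is to argue face-by-face over the non-overlapping simplices $\{\ELE_\flat^F\}\subset\ELE$ supplied by \assumpref{assumption:coarse:shape_regular}. Assuming for the moment that $L_\ELE\geq1$, for each face $F\subset\partial\ELE$ I would apply the continuous (multiplicative) trace inequality on $\ELE_\flat^F$,
\[
\norm{w}_{L^2(F)}^2\leq C_t\frac{\abs{F}}{\abs{\ELE_\flat^F}}\norm{w}_{L^2(\ELE_\flat^F)}^2+C_t\,\norm{w}_{L^2(\ELE_\flat^F)}\,\abs{w}_{H^1(\ELE_\flat^F)},
\]
and replace the geometric factor using $\abs{F}/\abs{\ELE_\flat^F}\leq C_s d/H_\ELE$, which is exactly \assumpref{assumption:coarse:shape_regular}; this is the crucial step that renders the constant independent of the number and measure of the faces. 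Summing over $F\subset\partial\ELE$ and using the non-overlapping property together with the discrete Cauchy--Schwarz inequality bounds the right-hand side by $C_t C_s d\,H_\ELE^{-1}\norm{w}_{L^2(\ELE)}^2+C_t\,\norm{w}_{L^2(\ELE)}\abs{w}_{H^1(\ELE)}$, into which I insert the interior estimate with $q=0$ and $q=1$.

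The half-powers $H_\ELE^{S_\ELE-\nicefrac12}$ and $P_\ELE^{-(L_\ELE-\nicefrac12)}$ of the target then emerge from the geometric-mean term, which scales as $\norm{w}_{L^2(\ELE)}\abs{w}_{H^1(\ELE)}\sim H_\ELE^{2S_\ELE-1}/P_\ELE^{2L_\ELE-1}$; the first term scales as $H_\ELE^{-1}\norm{w}_{L^2(\ELE)}^2\sim H_\ELE^{2S_\ELE-1}/P_\ELE^{2L_\ELE}$ and is dominated by it since $P_\ELE\geq1$, so taking square roots completes the second bound with $C_{I,2}$ collecting $C_t$, $C_s$, $C_D$, and the shape-regularity constants. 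Finally, the remaining regularity range $\nicefrac12<L_\ELE<1$, where the $H^1$-seminorm is unavailable, is handled by the analogous fractional trace inequality (equivalently, by Sobolev interpolation between the $L^2$- and $H^1$-based trace estimates), which yields the same scaling with $S_\ELE=L_\ELE$.
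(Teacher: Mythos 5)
The paper does not actually prove this lemma but imports it verbatim from \cite[Lemmas 23 \& 33]{Cangiani}, and your argument is a correct reconstruction of precisely the proof given there: Babu\v{s}ka--Suri $hp$-approximation of $\mathfrak{E}v$ on the shape-regular covering simplex $\COVERELE$ followed by restriction to $\ELE$ and \assumpref{assumption:coarse:covering} for the interior bound, and the face-by-face multiplicative trace inequality on the non-overlapping sub-simplices $\ELE_{\flat}^{F}$ combined with \assumpref{assumption:coarse:shape_regular} (which is exactly what makes the constant independent of the number and measure of the faces) for the boundary bound. The only cosmetic imprecision is that the cross term of the multiplicative trace inequality on $\ELE_{\flat}^{F}$ in fact carries the geometric factor $h_{\ELE_{\flat}^{F}}\abs{F}/(d\abs{\ELE_{\flat}^{F}})$, which is not universally $O(1)$ for a degenerate simplex but is bounded by $C_s$ precisely via \assumpref{assumption:coarse:shape_regular}; since you invoke that assumption at this step anyway, the conclusion stands.
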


From the proof of Theorem~\ref{thm-extension} one can establish that the constant 
$C_{\mathfrak{E}}$ is independent of the measure of the underlying domain $D$, cf. \cite{AHPS_2020}.
Hence, employing Theorem~\ref{thm-extension}, Lemma~\ref{lemma:coarse:approximation} may be stated in the following simplified form.
\begin{corollary}\label{corollary:coarse:approximation}
	Under the assumptions of Lemma~\ref{lemma:coarse:approximation}, the following bounds hold
	\begin{align*}
\norm{v-\Pi v}_{H^q(\ELE)} 
&\leq C_{I,1}^\prime \frac{H_{\ELE}^{S_{\ELE}-q}}{P_{\ELE}^{L_{\ELE}-q}}\norm{v}_{H^{L_{\ELE}}(\ELE)}, && L_{\ELE} \geq 0, \qquad 0 \leq q \leq L_{\ELE}, \\
\norm{v-\Pi v}_{L^2(\partial\ELE)} 
&\leq C_{I,2}^\prime \frac{H_{\ELE}^{S_{\ELE}-\nicefrac12}}{P_{\ELE}^{L_{\ELE}-\nicefrac12}}\norm{v}_{H^{L_{\ELE}}(\ELE)}, && L_{\ELE} > \nicefrac12.
\end{align*}
\end{corollary}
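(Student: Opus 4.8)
The plan is to obtain the corollary as an immediate consequence of Lemma~\ref{lemma:coarse:approximation}, the only additional ingredient being the stability of the extension operator supplied by Theorem~\ref{thm-extension}. The idea is to bound the covering norm $\norm{\mathfrak{E}v}_{H^{L_{\ELE}}(\COVERELE)}$ appearing on the right-hand side of both estimates in Lemma~\ref{lemma:coarse:approximation} by the local Sobolev norm $\norm{v}_{H^{L_{\ELE}}(\ELE)}$, and then to absorb the extension constant into the generic constants to obtain $C_{I,1}'$ and $C_{I,2}'$. Crucially, I would apply the extension operator \emph{elementwise}, taking $D=\ELE$, so that the resulting bound is genuinely local; the approximant $\Pi v$ in Lemma~\ref{lemma:coarse:approximation} is understood to be the one built from this local extension of $v\vert_{\ELE}$.

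First I would fix a coarse element $\ELE\in\MESH$ with associated covering simplex $\COVERELE\in\MESH^\sharp$, $\ELE\subset\COVERELE$. Since $\ELE$ is a polytope, its boundary $\partial\ELE$ is Lipschitz and hence minimally smooth, so Theorem~\ref{thm-extension} applies with $D=\ELE$ and $s=L_{\ELE}$, yielding an extension $\mathfrak{E}v\in H^{L_{\ELE}}(\real^d)$ of $v\vert_{\ELE}$ with $\norm{\mathfrak{E}v}_{H^{L_{\ELE}}(\real^d)}\leq C_{\mathfrak{E}}\norm{v}_{H^{L_{\ELE}}(\ELE)}$. Because $\ELE\subset\COVERELE\subset\real^d$, monotonicity of the Sobolev norm with respect to the integration domain gives $\norm{\mathfrak{E}v}_{H^{L_{\ELE}}(\COVERELE)}\leq\norm{\mathfrak{E}v}_{H^{L_{\ELE}}(\real^d)}\leq C_{\mathfrak{E}}\norm{v}_{H^{L_{\ELE}}(\ELE)}$. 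Substituting this into the two bounds of Lemma~\ref{lemma:coarse:approximation} and setting $C_{I,1}'=C_{\mathfrak{E}}C_{I,1}$ and $C_{I,2}'=C_{\mathfrak{E}}C_{I,2}$ then produces exactly the claimed inequalities.

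The hard part will not be the algebraic substitution but verifying that the constants $C_{I,1}'$ and $C_{I,2}'$ remain independent of $H_{\ELE}$, $P_{\ELE}$, and uniform across the whole mesh family; this reduces entirely to controlling $C_{\mathfrak{E}}$ uniformly over all elements $\ELE\in\MESH$. The classical Stein constant depends on the geometric parameters of $\partial D$, and a direct application would allow it to blow up as $H_{\ELE}\to 0$. This is precisely the point settled by the remark preceding the corollary: as may be extracted from the proof of Theorem~\ref{thm-extension}, and as recorded in \cite{AHPS_2020}, $C_{\mathfrak{E}}$ can be taken independent of the measure of $D$. Morally this follows from a scaling argument, rescaling $\ELE$ to unit diameter and tracking how each term of the $H^{L_{\ELE}}$-norm transforms under the dilation, so that, in combination with the shape-regularity encoded in \assumpref{assumption:coarse:shape_regular} and \assumpref{assumption:coarse:covering}, the extension constant stays bounded uniformly in $h$ and $\vect{p}$. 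With this measure-independence in hand, the substitution above completes the argument.
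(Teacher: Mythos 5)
Your argument is correct and is essentially the paper's own justification: the paper dispenses with the corollary via the one-line observation that $C_{\mathfrak{E}}$ in Theorem~\ref{thm-extension} can be taken independent of the measure of $D$ (citing \cite{AHPS_2020}), so that applying the extension elementwise with $D=\ELE$ and using $\ELE\subset\COVERELE$ converts $\norm{\mathfrak{E}v}_{H^{L_{\ELE}}(\COVERELE)}$ into $C_{\mathfrak{E}}\norm{v}_{H^{L_{\ELE}}(\ELE)}$. You have correctly identified the measure-independence of the Stein constant as the only nontrivial ingredient and located its justification where the paper does.
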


Hence, the condition employed in \cite{Cangiani,cangiani2013hp} regarding the amount of overlap of the simplices $\mathcal{K}$ is not required.

\subsection{Stability analysis of the coarse grid approximation}

Based on the inverse inequality stated in \lemmaref{lemma:inverse_ineq}, following \cite[Lemma 35]{Cangiani}, we define the coarse mesh interior penalty parameter $\IP$ as follows:
\begin{equation}
\label{eqn:ip:coarse}
\IP = \begin{cases}
    \gamma_{HP} \max_{\ELE\in\{\ELE^+,\ELE^-\}} \left( C_{\textrm{INV}} \frac{P_{\ELE}^2}{H_{\ELE}}\right), & \vect{x}\in F\in\FACE[I], F=\partial\ELE^+\cap\partial\ELE^-, \\
    \gamma_{HP} C_{\textrm{INV}} \frac{P_{\ELE}^2}{H_{\ELE}}, & \vect{x}\in F\in\FACE[B], F=\partial\ELE^+\cap\Gamma,    
\end{cases}
\end{equation}
where $C_{\textrm{INV}}(P_K,K)$ is the constant from the inverse inequality \lemmaref{lemma:inverse_ineq} and $\gamma_{HP}>0$ is a sufficiently large constant; cf. \lemmaref{lemma:coarse:monotonicity_continuity}.
We also define the DGFEM norm $\norm{\cdot}_{HP}$ on the coarse mesh analogously to the fine mesh norm $\norm{\cdot}_{hp}$ from \eqref{eqn:norm} using the coarse mesh interior penalty parameter $\IP$ defined above.

To analyze the $hp$--version DGFEM defined on the coarse mesh, cf. \eqref{eqn:tg_fem:coarse}, in the case when general polytopic elements are employed, without introducing unnecessary regularity assumptions on the analytical solution $u$ and only utilizing the $hp$-approximation results available on polytopic elements, the analysis presented in \cite{CongreveTG} must be generalized in a suitable manner. To this end, we introduce the following extension of the form $\AH(\cdot; \cdot,\cdot)$, cf. \cite{Cangiani, Perugia2002}, to $\mathcal{V}\times\mathcal{V}$, where $\mathcal{V}=H^1(\Omega)+\DGSPACE$:
\begin{align*}
    \AV(u,v) = &\sum_{\ELE\in\MESH} \int_{\ELE} \mu(\abs{\nabla u})\nabla u\cdot\nabla v\dx + \sum_{F\in\FACE} \int_F \IP \jump{u}\cdot\jump{v}\ds \\*
    &-\sum_{F\in\FACE} \int_F \avg{ \mu(\abs{\LProj(\nabla_h u)}) \LProj(\nabla_h u)}\cdot\jump{v} \ds.
\end{align*}
Here, $\LProj : [L^2(\Omega)]^d \to [\DGSPACE]^d$ denotes the orthogonal $L^2$-projection onto the finite element space $[\DGSPACE]^d$. We note, that
\[
    \AV(u,v) = \AH(u; u,v) \qquad \text{for all } u,v \in \DGSPACE.
\]
The Lipschitz continuity and strong monotonicity for the extended form $\AV(\cdot,\cdot)$ are determined in the following lemma.
\begin{lemma}\label{lemma:coarse:monotonicity_continuity}
Let $\gamma_{HP}>C_1^2C_2^{-1}C_sd\epsilon$, where $\epsilon>\nicefrac14$; then, given \assumpref{assumption:coarse:shape_regular} holds, we have that the nonlinear form
$\AV(\cdot,\cdot)$ is strongly monotone in the sense that 
\begin{equation}\label{lemma:coarse:monotonicity_continuity:monotone}
\AV(v_1,v_1-v_2)-\AV(v_2,v_1-v_2) \geq C_{\mathrm{mono}}\norm{v_1-v_2}_{HP}^2 \quad \text{for all } v_1,v_2\in\mathcal{V},
\end{equation}
and Lipschitz continuous in the sense that
\begin{equation}\label{lemma:coarse:monotonicity_continuity:lipschitz}
\abs{\AV(v_1,w)-\AV(v_2,w)} \leq C_{\mathrm{cont}}\norm{v_1-v_2}_{HP}\norm{w}_{HP} \quad \text{for all } v_1,v_2,w\in\mathcal{V},
\end{equation}
where $C_{\mathrm{mono}}$ and $C_{\mathrm{cont}}$ are positive constants independent of the discretization parameters.
\end{lemma}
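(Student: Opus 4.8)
\section*{Proof proposal}

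The plan is to prove both statements from a single decomposition. Writing $w=v_1-v_2$, I would expand $\AV(v_1,\cdot)-\AV(v_2,\cdot)$ into three contributions: the elementwise volume term containing $\mu(\abs{\nabla v_1})\nabla v_1-\mu(\abs{\nabla v_2})\nabla v_2$, the (bilinear) interior-penalty term, and the face-flux term containing the $L^2$-projected broken gradients $\LProj(\nabla_h v_i)$. Both \eqref{lemma:coarse:monotonicity_continuity:monotone} and \eqref{lemma:coarse:monotonicity_continuity:lipschitz} then reduce to estimating these three pieces; the only genuinely new ingredient compared with \cite{CongreveTG} is the face-flux term on polytopic elements, which is precisely why the projected form $\AV$ was introduced.

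For the strong monotonicity I would test with the second argument $w=v_1-v_2$. The volume term is bounded below by $C_2\norm{\nabla_h w}_{L^2(\Omega)}^2$ via the lower bound \eqref{eqn:nl_bound:lower} with $\vect{v}=\nabla v_1$, $\vect{w}=\nabla v_2$, while the penalty term reproduces exactly $\sum_{F\in\FACE}\int_F\IP\abs{\jump{w}}^2\ds$. It then remains to absorb the face-flux term. Using \eqref{eqn:nl_bound:upper} to replace the flux difference by $C_1\abs{\avg{\LProj(\nabla_h w)}}$, together with a weighted Young inequality (weights $\IP^{\pm1/2}$), this term is controlled by $\tfrac{C_1}{2\alpha}\sum_F\int_F\IP^{-1}\abs{\avg{\LProj(\nabla_h w)}}^2\ds+\tfrac{C_1\alpha}{2}\sum_F\int_F\IP\abs{\jump{w}}^2\ds$ for a free parameter $\alpha>0$.

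The crux is bounding $\sum_F\int_F\IP^{-1}\abs{\avg{\LProj(\nabla_h w)}}^2\ds$. Since $\LProj(\nabla_h w)$ is a polynomial of degree $P_\ELE$ on each $\ELE\in\MESH$, the inverse inequality \lemmaref{lemma:inverse_ineq} applies and produces exactly the factor $C_sC_{\textrm{INV}}d\,P_\ELE^2/H_\ELE$; this is designed to cancel against the $H_\ELE/(C_{\textrm{INV}}P_\ELE^2)$ arising from $\IP^{-1}$ in \eqref{eqn:ip:coarse}, where the maximum in the interior-face definition of $\IP$ guarantees that $\IP^{-1}$ is dominated by the reciprocal associated with either adjacent element. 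Combining this with the factor $\tfrac12$ from the average and the elementwise $L^2$-stability $\norm{\LProj(\nabla_h w)}_{L^2(\ELE)}\leq\norm{\nabla_h w}_{L^2(\ELE)}$ yields $\sum_F\int_F\IP^{-1}\abs{\avg{\LProj(\nabla_h w)}}^2\ds\leq\tfrac{C_sd}{2\gamma_{HP}}\norm{\nabla_h w}_{L^2(\Omega)}^2$. Substituting back, the lower bound reads $\bigl(C_2-\tfrac{C_1C_sd}{4\alpha\gamma_{HP}}\bigr)\norm{\nabla_h w}_{L^2(\Omega)}^2+\bigl(1-\tfrac{C_1\alpha}{2}\bigr)\sum_F\int_F\IP\abs{\jump{w}}^2\ds$; the choice $\alpha=1/C_1$ fixes the jump coefficient at $\tfrac12$ and renders the volume coefficient positive precisely when $\gamma_{HP}>C_1^2C_2^{-1}C_sd\,\tfrac14$, which is the stated threshold with $\epsilon>\tfrac14$. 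Taking $C_{\mathrm{mono}}$ to be the minimum of the two positive coefficients completes this part.

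The Lipschitz continuity follows from the same decomposition but is simpler, since no absorption is required: the volume term is bounded by $C_1\norm{\nabla_h(v_1-v_2)}_{L^2(\Omega)}\norm{\nabla_h w}_{L^2(\Omega)}$ via \eqref{eqn:nl_bound:upper} and Cauchy--Schwarz, the penalty term by $\bigl(\sum_F\int_F\IP\abs{\jump{v_1-v_2}}^2\bigr)^{1/2}\bigl(\sum_F\int_F\IP\abs{\jump{w}}^2\bigr)^{1/2}$, and the face-flux term by the same inverse-inequality estimate applied to $\LProj(\nabla_h(v_1-v_2))$; each piece is bounded by a constant multiple of $\norm{v_1-v_2}_{HP}\norm{w}_{HP}$, giving $C_{\mathrm{cont}}$ depending only on $C_1$, $C_s$, $d$, and $\gamma_{HP}$. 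The main obstacle throughout is the careful constant tracking in the face-flux term — ensuring that the inverse-inequality constant, the penalty scaling, and the averaging factor combine to reproduce the sharp threshold $\epsilon>\tfrac14$ — together with the justification that \lemmaref{lemma:inverse_ineq} may legitimately be invoked on the polynomial $\LProj(\nabla_h w)$ rather than on $\nabla_h w$ itself.
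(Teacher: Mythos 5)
Your argument follows essentially the same route as the paper's own proof: the identical volume/penalty/face-flux decomposition, the bounds \eqref{eqn:nl_bound:lower}--\eqref{eqn:nl_bound:upper}, a Young inequality with a free parameter (your $\alpha$ is just a reparametrisation of the paper's $\epsilon$, via $\alpha = \nicefrac{1}{(2\epsilon C_1)}$), and the combination of \lemmaref{lemma:inverse_ineq}, the definition \eqref{eqn:ip:coarse} of $\IP$, and the $L^2$-stability of $\LProj$ to absorb the projected face-flux term, with the Lipschitz bound obtained from the same estimates without absorption. The one small slip is the factor $\nicefrac12$ you credit to the average: on boundary faces $\avg{\vect q}=\vect q$, so the uniform constant is $C_sd/\gamma_{HP}$ rather than $C_sd/(2\gamma_{HP})$, which shifts your threshold for $\gamma_{HP}$ by a factor of two but does not affect the validity of the lemma, whose conclusion only requires $\gamma_{HP}$ sufficiently large.
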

\begin{proof}
The following proof proceeds in a similar manner to \cite[Lemma 27]{Cangiani} with modifications to account for the nonlinearity. Starting with the bound \eqref{lemma:coarse:monotonicity_continuity:monotone}, we note by applying
\eqref{eqn:nl_bound:lower} that
\begin{align}
&\AV(v_1,v_1-v_2)-\AV(v_2,v_1-v_2) \nonumber\\*
&\quad\geq C_2 \sum_{\ELE\in\MESH}\norm{\nabla(v_1-v_2)}_{L^2(\ELE)}^2 +\sum_{F\in\FACE}\norm{\IP^{\nicefrac12}\jump{v_1-v_2}}_{L^2(F)}^2 \nonumber\\*
&\qquad\begin{multlined}[t][0.8\textwidth]-\sum_{F\in\FACE} \int_F \abs{\avg{ \mu(\abs{\LProj(\nabla_h v_1)}) \LProj(\nabla_h v_1) \nonumber\\*- \mu(\abs{\LProj(\nabla_h v_2)}) \LProj(\nabla_h v_2) }\cdot\jump{v_1-v_2}} \ds \end{multlined}\nonumber\\*
&\quad\equiv \mbox{I}+\mbox{II}+\mbox{III}.\label{lemma:coarse:monotonicity_continuity:monotone:step1}
\end{align}
We now proceed by considering Term III. To this end, for $F\in\FACE[I]$, such that $F\subset \partial K^+ \cap \partial K^-$, upon application of \eqref{eqn:nl_bound:upper}, the Cauchy-Schwarz inequality, and the arithmetic-geometric mean inequality $ab\leq  a^2\epsilon + \nicefrac{b^2}{(4\epsilon)}$, $\epsilon>0$, we deduce that
\begin{align*}
& \int_F \abs{\avg{ \mu(\abs{\LProj(\nabla_h v_1)}) \LProj(\nabla_h v_1) - \mu(\abs{\LProj(\nabla_h v_2)}) \LProj(\nabla_h v_2) }\cdot\jump{v_1-v_2}} \ds \\*
&\quad\leq C_1 \int_F \avg{\abs{\LProj(\nabla_h (v_1-v_2))}} \abs{\jump{v_1-v_2}} \ds \\*
&\quad\leq C_1^2\epsilon \left( \sum_{\ELE\in\{\ELE^+,\ELE^-\}} \norm{\IP^{-\nicefrac12} \LProj(\nabla_h (v_1-v_2)\vert_{\ELE})}_{L^2(F)}^2 \right) \\*
&\qquad+ \frac{1}{8\epsilon}\norm{\IP^{\nicefrac12}\jump{v_1-v_2}}_{L^2(F)}^2.
\end{align*}
Analogously, for $F\in\FACE[B]$, we have that
\begin{multline*}
\int_F \abs{\avg{ \mu(\abs{\LProj(\nabla_h v_1)}) \LProj(\nabla_h v_1) - \mu(\abs{\LProj(\nabla_h v_2)}) \LProj(\nabla_h v_2) }\cdot\jump{v_1-v_2}} \ds \\ 
\leq C_1^2 \epsilon \norm{\IP^{-\nicefrac12} \LProj(\nabla_h (v_1-v_2))}_{L^2(F)}^2 + \frac{1}{4\epsilon}\norm{\IP^{\nicefrac12}\jump{v_1-v_2}}_{L^2(F)}^2.
\end{multline*}
Combining these results, employing the inverse inequality \lemmaref{lemma:inverse_ineq}, the definition of $\IP$ from \eqref{eqn:ip:coarse}, and the $L^2$-stability of $\LProj$, we have that
\begin{align*}
\mbox{III}
&\leq C_1^2 \epsilon \sum_{\ELE\in\MESH} \IP^{-1} \norm{\LProj(\nabla (v_1-v_2))}_{L^2(\partial\ELE)}^2 + \frac{1}{4\epsilon}\sum_{F\in\FACE}\norm{\IP^{\nicefrac12}\jump{v_1-v_2}}_{L^2(F)}^2 \\*
&\leq \frac{C_1^2 \epsilon C_s d}{\gamma_{HP}} \sum_{\ELE\in\MESH} \norm{\nabla (v_1-v_2)}_{L^2(\ELE)}^2 + \frac{1}{4\epsilon}\sum_{F\in\FACE}\norm{\IP^{\nicefrac12}\jump{v_1-v_2}}_{L^2(F)}^2.
\end{align*}
Inserting this result into \eqref{lemma:coarse:monotonicity_continuity:monotone:step1} gives
\begin{align*}
\AV(v_1,v_1-v_2)-\AV(v_2,v_1-v_2) \geq &\left(C_2-\frac{C_1^2 \epsilon C_s d}{\gamma_{HP}}\right) \sum_{\ELE\in\MESH}\norm{\nabla(v_1-v_2)}_{L^2(\ELE)}^2
\\&+\left(1-\frac{1}{4\epsilon}\right) \sum_{F\in\FACE}\norm{\IP^{\nicefrac12}\jump{v_1-v_2}}_{L^2(F)}^2.
\end{align*}
Therefore, the nonlinear form $\AV(\cdot,\cdot)$ is strongly monotone over $\mathcal{V}\times\mathcal{V}$, assuming that $\epsilon>\nicefrac14$ and $\gamma_{HP}>C_1^2C_2^{-1}C_sd\epsilon$.

We now consider the second bound \eqref{lemma:coarse:monotonicity_continuity:lipschitz}. By applying \eqref{eqn:nl_bound:upper} and Cauchy-Schwarz, we get that
\begin{equation*}\begin{aligned}
&\abs{\AV(v_1,w)-\AV(v_2,w)} \\*
&\leq C_1 \sum_{\ELE\in\MESH}\norm{\nabla(v_1-v_2)}_{L^2(\ELE)}\norm{\nabla w}_{L^2(\ELE)}\\*
&\quad+\sum_{F\in\FACE}\norm{\IP^{\nicefrac12}\jump{v_1-v_2}}_{L^2(F)}\norm{\IP^{\nicefrac12}\jump{w}}_{L^2(F)} \\*
&\quad+ \sum_{F\in\FACE} \int_F \abs{\avg{ \mu(\abs{\LProj(\nabla_h v_1)}) \LProj(\nabla_h v_1) - \mu(\abs{\LProj(\nabla_h v_2)}) \LProj(\nabla_h v_2) }\cdot\jump{w}} \ds.
\end{aligned}\end{equation*}
Following a similar proof to the bound for \eqref{lemma:coarse:monotonicity_continuity:monotone}, without the need to employ the arithmetic-geometric mean inequality, we deduce that
\begin{multline*}
\sum_{F\in\FACE}\int_F \abs{\avg{ \mu(\abs{\LProj(\nabla_h v_1)}) \LProj(\nabla_h v_1) - \mu(\abs{\LProj(\nabla_h v_2)}) \LProj(\nabla_h v_2) }\cdot\jump{w}} \ds \\ 
\leq C_1\left( \frac{C_sd}{\gamma_{HP}} \sum_{\ELE\in\MESH}\norm{\nabla(v_1-v_2)}_{L^2(\ELE)}^2\right)^{\nicefrac12}
\left(\sum_{F\in\FACE} \norm{\IP^{\nicefrac12}\jump{w}}_{L^2(F)}^2\right)^{\nicefrac12}.
\end{multline*}
A simple application of Cauchy-Schwarz completes the proof of Lipschitz continuity.
\end{proof}

\begin{theorem}
Suppose that $\gamma_{hp}$ and $\gamma_{HP}$ are sufficiently large; cf. \lemmaref{lemma:coercivity} and \lemmaref{lemma:coarse:monotonicity_continuity}, respectively. Then, there exists a unique solution $\ut\in\dgspace$ to the two-grid DGFEM \eqref{eqn:tg_fem:coarse}--\eqref{eqn:tg_fem:fine}.
\end{theorem}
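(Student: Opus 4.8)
The plan is to exploit the fact that the two-grid method decouples into a nonlinear coarse solve followed by a linear fine solve, and to establish well-posedness of each in turn. Well-posedness of $\ut$ presupposes that the coarse datum $\uH$ entering \eqref{eqn:tg_fem:fine} is itself uniquely determined, so I would first treat the coarse problem \eqref{eqn:tg_fem:coarse} and then the fine problem \eqref{eqn:tg_fem:fine}.

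For the coarse solve, the key observation is that $\AV(u,v) = \AH(u;u,v)$ for all $u,v\in\DGSPACE$, so \eqref{eqn:tg_fem:coarse} is equivalent to seeking $\uH\in\DGSPACE$ with $\AV(\uH,\vH)=\FH(\vH)$ for all $\vH\in\DGSPACE$. By \lemmaref{lemma:coarse:monotonicity_continuity} the form $\AV(\cdot,\cdot)$ is strongly monotone and Lipschitz continuous on $\mathcal{V}\times\mathcal{V}$, and since $\DGSPACE\subset\mathcal{V}$ these properties are inherited on the coarse space; moreover $\FH(\cdot)$ is a bounded linear functional on $\DGSPACE$ because $f\in L^2(\Omega)$. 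Existence and uniqueness of $\uH$ then follows from the standard theory of monotone operators (the Browder--Minty theorem), which I would invoke directly; alternatively, since $\DGSPACE$ is finite dimensional, existence can be obtained from a corollary of Brouwer's fixed point theorem applied to the continuous, coercive operator induced by $\AV$, with uniqueness following immediately from strong monotonicity.

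For the fine solve, I would fix the now-determined $\uH\in\DGSPACE\subseteq\dgspace$ and observe that the map $(\ut,\vh)\mapsto\Ah(\uH;\ut,\vh)$ is \emph{bilinear} on $\dgspace\times\dgspace$, since the nonlinear coefficient $\mu(\abs{\nabla_h\uH})$ is frozen. Coercivity with constant $C_c$ is immediate from \lemmaref{lemma:coercivity} on taking $\phi=\uH$. For continuity I would note that the bound \eqref{eqn:nl_bound:upper} with $\vect{w}=\vect{0}$ gives $\mu(\abs{\vect{v}})\abs{\vect{v}}\leq C_1\abs{\vect{v}}$, so the frozen coefficient is uniformly bounded; combining this with the Cauchy--Schwarz inequality and standard $hp$-trace inverse estimates on the fine mesh to control the consistency term, one obtains $\abs{\Ah(\uH;u,v)}\leq C\norm{u}_{hp}\norm{v}_{hp}$. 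The Lax--Milgram theorem then delivers a unique $\ut\in\dgspace$ solving \eqref{eqn:tg_fem:fine}, which completes the proof.

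I expect the argument to be largely routine once the two stages are separated. The only points requiring genuine care are the verification that the monotone-operator machinery applies to $\AV$ restricted to $\DGSPACE$ (straightforward here thanks to \lemmaref{lemma:coarse:monotonicity_continuity} together with finite dimensionality) and the continuity estimate for the frozen-coefficient fine-grid bilinear form, where the consistency term must be controlled by trace inequalities; neither constitutes a real obstacle, and, crucially, no additional regularity of the exact solution $u$ is needed.
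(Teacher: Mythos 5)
Your proposal is correct and follows essentially the same route as the paper: the coarse nonlinear problem is handled via the identity $\AV(\uH,\vH)=\AH(\uH;\uH,\vH)$ together with the strong monotonicity and Lipschitz continuity of $\AV$ from \lemmaref{lemma:coarse:monotonicity_continuity} (the paper delegates this to a cited monotone-operator argument, which is exactly the Browder--Minty/Brouwer machinery you invoke), and the fine problem is treated as a linear interior penalty discretization with frozen coefficient, whose well-posedness the paper likewise attributes to standard linear DGFEM theory. You simply spell out the Lax--Milgram details that the paper leaves to references.
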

\begin{proof}
Given that \lemmaref{lemma:coarse:monotonicity_continuity} demonstrates Lipschitz continuity and strong monotonicity of the semi-linear form $\AV(\cdot,\cdot)$, and 
\[
\AV(\uH,\vH) = \AH(\uH; \uH,\vH) = \FH(\vH)
\]
for all $\vH \in \DGSPACE$, we can follow the proof of \cite[Theorem 2.5]{Houston_QL_priori} to show that $\uH$ is a unique solution of \eqref{eqn:tg_fem:coarse}. Furthermore, as the fine grid formulation \eqref{eqn:tg_fem:fine} is an interior penalty discretization of a linear elliptic PDE, where the coefficient $\mu(\abs{\nabla_h \uH})$ is a known function, the existence and uniqueness of the solution $\ut$ to this problem follows immediately; cf., for example, \cite{StammWihler,Wihler2003}.
\end{proof}

\section{Error analysis}\label{section:error_analysis}
In this section, we derive an \emph{a priori} error bound for the two-grid DGFEM \eqref{eqn:tg_fem:coarse}--\eqref{eqn:tg_fem:fine}. To this end, we first establish an \emph{a priori} error bound for the coarse mesh approximation defined by \eqref{eqn:tg_fem:coarse}.

\subsection{Coarse mesh \emph{a priori} error bound}
We first state and prove the following abstract error bound, derived in a similar manner to \cite[Lemma 4.8]{Cangiani}.
\begin{lemma}\label{lemma:abstract:priori}
Let $u\in H^1(\Omega)$ be the weak solution to \eqref{eqn:quasilinear_eqn} and $\uH\in\DGSPACE$ be the coarse mesh approximation defined by \eqref{eqn:tg_fem:coarse}. Assuming that $\gamma_{HP}$ is sufficiently large, cf.  \lemmaref{lemma:coarse:monotonicity_continuity}, the following abstract error bound holds.
\begin{multline*}
\norm{u-\uH}_{HP} \leq \left(1+\frac{C_{\mathrm{cont}}}{C_{\mathrm{mono}}}\right) \inf_{\vH\in\DGSPACE}\norm{u-\vH}_{HP} \\
    + \frac1{C_{\mathrm{mono}}} \sup_{\wH\in\DGSPACE\setminus\{0\}} \frac{\abs{\AV(u,\wH)-\FH(\wH)}}{\norm{\wH}_{HP}}.
\end{multline*}
\begin{proof}
The proof follows almost identically to \cite[Lemma 28]{Cangiani}, using the strong monotonicity \eqref{lemma:coarse:monotonicity_continuity:monotone} and Lipschitz continuity \eqref{lemma:coarse:monotonicity_continuity:lipschitz} from \lemmaref{lemma:coarse:monotonicity_continuity}.
\end{proof}
\end{lemma}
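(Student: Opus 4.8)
The plan is to follow the standard best-approximation (Strang/C\'ea-type) argument for strongly monotone problems, exactly in the spirit of \cite[Lemma 28]{Cangiani}, but taking care to retain the non-consistency term induced by the projected flux appearing in $\AV(\cdot,\cdot)$. First I would fix an arbitrary $\vH\in\DGSPACE$ and split the error by the triangle inequality, $\norm{u-\uH}_{HP}\leq\norm{u-\vH}_{HP}+\norm{\vH-\uH}_{HP}$, so that the whole task reduces to bounding the discrete quantity $\norm{\vH-\uH}_{HP}$; the infimum over $\vH$ is then taken at the very end.

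Next I would apply the strong monotonicity \eqref{lemma:coarse:monotonicity_continuity:monotone} of \lemmaref{lemma:coarse:monotonicity_continuity} with $v_1=\vH$ and $v_2=\uH$ to obtain $C_{\mathrm{mono}}\norm{\vH-\uH}_{HP}^2\leq\AV(\vH,\vH-\uH)-\AV(\uH,\vH-\uH)$. The key reduction is the observation that, since $\vH-\uH\in\DGSPACE$, the identity $\AV(u,v)=\AH(u;u,v)$ on $\DGSPACE\times\DGSPACE$ together with the coarse discrete equation \eqref{eqn:tg_fem:coarse} gives $\AV(\uH,\vH-\uH)=\AH(\uH;\uH,\vH-\uH)=\FH(\vH-\uH)$. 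I would then insert $\pm\AV(u,\vH-\uH)$ and rewrite the right-hand side as $[\AV(\vH,\vH-\uH)-\AV(u,\vH-\uH)]+[\AV(u,\vH-\uH)-\FH(\vH-\uH)]$.

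The first bracket is controlled by the Lipschitz continuity \eqref{lemma:coarse:monotonicity_continuity:lipschitz}, applied with $w=\vH-\uH$, yielding $C_{\mathrm{cont}}\norm{u-\vH}_{HP}\norm{\vH-\uH}_{HP}$; the second bracket is the residual, bounded by the supremum $\sup_{\wH\in\DGSPACE\setminus\{0\}}\abs{\AV(u,\wH)-\FH(\wH)}\,\norm{\wH}_{HP}^{-1}$ multiplied by $\norm{\vH-\uH}_{HP}$. Cancelling one factor of $\norm{\vH-\uH}_{HP}$ (the case $\vH=\uH$ being trivial) and dividing by $C_{\mathrm{mono}}$ produces the sought estimate for $\norm{\vH-\uH}_{HP}$; substituting this into the triangle inequality above and passing to the infimum over $\vH\in\DGSPACE$ delivers the claimed bound.

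The main obstacle, and the essential departure from a textbook C\'ea argument, is that full Galerkin orthogonality is \emph{not} available here: the extended form $\AV(\cdot,\cdot)$ replaces the true flux by its projected counterpart $\mu(\abs{\LProj(\nabla_h u)})\LProj(\nabla_h u)$ on the faces, so the residual $\AV(u,\wH)-\FH(\wH)$ need not vanish even for the exact solution $u$. The care required is therefore to keep this quantity explicit as the consistency/supremum term rather than discarding it; in the subsequent \emph{a priori} analysis it will have to be estimated separately using the $L^2$-stability and approximation properties of $\LProj$ together with \lemmaref{lemma:coarse:approximation}. All remaining steps are routine applications of strong monotonicity, Lipschitz continuity, and the triangle inequality.
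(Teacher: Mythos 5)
Your argument is correct and is precisely the Strang-type estimate that the paper invokes by reference to \cite[Lemma 28]{Cangiani}: triangle inequality, strong monotonicity applied to $\vH-\uH$, the identity $\AV(\uH,\cdot)=\FH(\cdot)$ on $\DGSPACE$, insertion of $\pm\AV(u,\cdot)$, Lipschitz continuity, and the residual kept as the supremum term. No discrepancy with the paper's (cited) proof.
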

We now state the following $hp$--version \emph{a priori} error bound for the coarse mesh approximation defined by \eqref{eqn:tg_fem:coarse}.
\begin{theorem}\label{theorem:coarse_priori}
Let $\MESH$ be a coarse mesh, constructed by agglomerating elements from a shape-regular fine mesh $\mesh$, satisfying Assumptions \ref{assumption:coarse:shape_regular} and \ref{assumption:coarse:covering}, with $\MESH^{\sharp}=\{\COVERELE\}$ an associated covering of $\MESH$ consisting of $d$-simplices; cf. \defref{def:covering}. Let $\uH\in\DGSPACE$ be the coarse mesh approximation defined by \eqref{eqn:tg_fem:coarse}. If the analytical solution $u\in H^1(\Omega)$ to \eqref{eqn:quasilinear_eqn} satisfies $u\vert_{\ELE}\in H^{L_{\ELE}}(\ELE)$, $L_{\ELE} \geq \nicefrac32$, for $\ELE\in\MESH$, such that $\mathfrak{E}u\vert_{\COVERELE}\in H^{L_{\ELE}}(\COVERELE)$, where $\COVERELE\in\MESH^{\sharp}$ with $\ELE\subset\COVERELE$; then,
\[
\norm{u-\uH}_{HP}^2 \leq C_3 \sum_{\ELE\in\MESH} \frac{H_{\ELE}^{2S_{\ELE}-2}}{P_{\ELE}^{2L_{\ELE}-2}} (1+\mathcal{G}_{\ELE}(H_{\ELE},P_{\ELE})) \norm{u}_{H^{L_{\ELE}}(\ELE)}^2,
\]
where $S_{\ELE} = \min(P_{\ELE}+1,L_{\ELE})$,
\[
    \mathcal{G}_{\ELE}(H_{\ele},P_{\ele})\coloneqq \frac{P_{\ELE}+P_{\ELE}^2}{H_{\ELE}}\max_{F\subset\partial\ELE} \left.\IP^{-1}\right\vert_F + \frac{H_{\ELE}}{P_{\ELE}}\max_{F\subset\partial\ELE} \left.\IP\right\vert_F
\]
and $C_3$ is a positive constant, independent on the discretization parameters, but dependent on the constants $m_{\mu}$, $M_{\mu}$, $C_1$, and $C_2$ from the monotonicity properties of $\mu(\cdot)$.
\end{theorem}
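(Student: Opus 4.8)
The plan is to invoke the abstract error bound of Lemma~\ref{lemma:abstract:priori}, which splits $\norm{u-\uH}_{HP}$ into a best-approximation term $\inf_{\vH\in\DGSPACE}\norm{u-\vH}_{HP}$ and an inconsistency (residual) term $\sup_{\wH}\abs{\AV(u,\wH)-\FH(\wH)}/\norm{\wH}_{HP}$. I would estimate each separately, choosing the quasi-interpolant $\vH=\Pi u$ from Corollary~\ref{corollary:coarse:approximation} for the first, and then match the two resulting contributions against the two summands of $\mathcal{G}_{\ELE}$.

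For the best-approximation term, take $\vH=\Pi u$ and expand the coarse energy norm (defined analogously to \eqref{eqn:norm}) into the broken-gradient volume part $\norm{\nabla_h(u-\Pi u)}_{L^2(\Omega)}^2$ and the penalised jump part. Since $u\in H^1(\Omega)$ vanishes on $\Gamma$, all jumps $\jump{u}$ vanish, so $\jump{u-\Pi u}=-\jump{\Pi u}$ and the jump term reduces to $\sum_{\ELE\in\MESH}(\max_{F\subset\partial\ELE}\IP\vert_F)\norm{u-\Pi u}_{L^2(\partial\ELE)}^2$. Applying the volume bound of Corollary~\ref{corollary:coarse:approximation} with $q=1$ produces the leading factor $H_{\ELE}^{2S_{\ELE}-2}P_{\ELE}^{2-2L_{\ELE}}$ (the ``$1$'' in $1+\mathcal{G}_{\ELE}$), while the boundary bound, after extracting $H_{\ELE}/P_{\ELE}$ from $H_{\ELE}^{2S_{\ELE}-1}/P_{\ELE}^{2L_{\ELE}-1}$, yields exactly the second summand $(H_{\ELE}/P_{\ELE})\max_{F\subset\partial\ELE}\IP\vert_F$ of $\mathcal{G}_{\ELE}$.

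For the inconsistency term I would first integrate the strong form of \eqref{eqn:quasilinear_eqn} by parts element-wise against $\wH\in\DGSPACE$; using the single-valuedness of the exact flux $\mu(\abs{\nabla u})\nabla u$ across interior faces together with $\jump{u}=0$, the standard DG flux identity gives $\FH(\wH)=\sum_{\ELE\in\MESH}\int_{\ELE}\mu(\abs{\nabla u})\nabla u\cdot\nabla\wH\dx-\sum_{F\in\FACE}\int_F\avg{\mu(\abs{\nabla u})\nabla u}\cdot\jump{\wH}\ds$. Subtracting from $\AV(u,\wH)$ and using $\nabla_h u=\nabla u$, the residual collapses to the single face term $\sum_{F\in\FACE}\int_F\avg{\mu(\abs{\nabla u})\nabla u-\mu(\abs{\LProj(\nabla_h u)})\LProj(\nabla_h u)}\cdot\jump{\wH}\ds$. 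The nonlinearity is then disposed of cleanly by \eqref{eqn:nl_bound:upper}, and a Cauchy--Schwarz step inserting $\IP^{\pm\nicefrac12}$ bounds the residual by $C_1(\sum_{\ELE\in\MESH}\max_{F\subset\partial\ELE}\IP^{-1}\vert_F\,\norm{\nabla u-\LProj(\nabla u)}_{L^2(\partial\ELE)}^2)^{\nicefrac12}\norm{\wH}_{HP}$.

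The crux --- and the main obstacle --- is the polytopic boundary estimate of $\norm{\nabla u-\LProj(\nabla u)}_{L^2(\partial\ELE)}$, since Corollary~\ref{corollary:coarse:approximation} only controls traces of a function, not of its $L^2$-projected gradient. I would insert the componentwise quasi-interpolant of $\nabla u$ (applying Corollary~\ref{corollary:coarse:approximation} to $\nabla u\in[H^{L_{\ELE}-1}(\ELE)]^d$, whence the hypothesis $L_{\ELE}\geq\nicefrac32$), so that $\norm{\nabla u-\LProj(\nabla u)}_{L^2(\partial\ELE)}$ splits into a direct trace-approximation term and a fully discrete term; the latter is estimated by the polytopic inverse inequality of Lemma~\ref{lemma:inverse_ineq} (which is where Assumption~\ref{assumption:coarse:shape_regular} enters) followed by $L^2$-stability and optimality of $\LProj$. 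Careful bookkeeping of the powers of $H_{\ELE}$ and $P_{\ELE}$ shows the trace term contributes the $P_{\ELE}/H_{\ELE}$ factor and the inverse-inequality term the $P_{\ELE}^2/H_{\ELE}$ factor, together reproducing $(P_{\ELE}+P_{\ELE}^2)H_{\ELE}^{-1}\max_{F\subset\partial\ELE}\IP^{-1}\vert_F$, i.e.\ the first summand of $\mathcal{G}_{\ELE}$. Combining the two estimates via Lemma~\ref{lemma:abstract:priori}, bounding $\norm{\nabla u}_{H^{L_{\ELE}-1}(\ELE)}\leq\norm{u}_{H^{L_{\ELE}}(\ELE)}$, and absorbing all generic constants (which depend on $m_\mu,M_\mu,C_1,C_2$ through $C_{\mathrm{mono}},C_{\mathrm{cont}}$) into $C_3$ gives the asserted bound.
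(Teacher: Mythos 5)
Your proposal follows the paper's proof essentially step for step: the abstract bound of \lemmaref{lemma:abstract:priori}, the choice $\vH=\Pi u$ for the best-approximation term, integration by parts reducing the residual to the face term involving $\nabla_h u-\LProj(\nabla_h u)$, and the add-and-subtract of $\Pi(\nabla u)$ combined with the polytopic inverse inequality of \lemmaref{lemma:inverse_ineq} and the $L^2$-stability of $\LProj$ to control the projected gradient on $\partial\ELE$. Your power bookkeeping reproduces exactly the paper's $H_{\ELE}^{2S_{\ELE}-3}/P_{\ELE}^{2L_{\ELE}-3}$ and $H_{\ELE}^{2S_{\ELE}-3}/P_{\ELE}^{2L_{\ELE}-4}$ contributions, i.e.\ the two summands $P_{\ELE}/H_{\ELE}$ and $P_{\ELE}^2/H_{\ELE}$ in $\mathcal{G}_{\ELE}$, so the argument is correct and is the same as the paper's.
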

\begin{proof}
Following the proof in \cite{Cangiani}, upon application of Corollary~\ref{corollary:coarse:approximation} we get
\begin{align} \label{theorem:coarse_priori:proof:step1}
\inf_{\vH\in\DGSPACE}&\norm{u-\vH}_{HP} \nonumber\\*
&\leq \norm{u-\Pi u}_{HP}^2 \nonumber\\*
&\leq \sum_{\ELE\in\MESH} \left( \norm{\nabla(u-\Pi u)}_{L^2(\ELE)}^2+2 \IP\norm{(u-\Pi u)\vert_{\ELE}}^2_{L^2(\partial\ELE)}\right) \nonumber \\*
&\leq C\sum_{\ELE\in\MESH} \frac{H_{\ELE}^{2S_{\ELE}-2}}{P_{\ELE}^{2L_{\ELE}-2}}\left(1+\frac{H_{\ELE}}{P_{\ELE}}\max_{F\subset\partial\ELE}\left.\IP\right\vert_F\right)\norm{u}_{H^{L_{\ELE}}(\ELE)}^2.
\end{align}
Employing integration by parts, recalling the fact that $u$ is the analytical solution to \eqref{eqn:quasilinear_eqn}, $\mu\in C^0(\overline\Omega \times [0,\infty))$, and the bound \eqref{eqn:nl_bound:upper}, we deduce that 
\begin{align*}
&\abs{\AV(u,\wH)-\FH(\wH)} \\
&\qquad\begin{multlined}= \Bigg\vert \sum_{\ELE\in\MESH} \int_{\partial\ELE} \mu(\abs{\nabla u})\nabla u\cdot\vect{n}_{\ELE} \wH \ds \\\qquad- \sum_{F\in\FACE}\int_F \avg{\mu(\abs{\LProj(\nabla_h u)})\LProj(\nabla_h u)}\cdot\jump{\wH}\ds \Bigg\vert\end{multlined} \\
&\qquad= \abs*{ \sum_{F\in\FACE}\int_F \avg{\mu(\abs{\nabla_h u})\nabla_h u - \mu(\abs{\LProj(\nabla_h u)})\LProj(\nabla_h u)}\cdot\jump{\wH}\ds } \\
&\qquad\leq C_1 \left( \sum_{F\in\FACE}\int_F \IP^{-1}\avg{\abs{\nabla_h u - \LProj(\nabla_h u)}}\ds\right)^{\nicefrac12}\norm{\wH}_{HP}.
\end{align*}
By adding and subtracting $\Pi (\nabla_h u)$ gives
\begin{multline*}
\sum_{F\in\FACE}\int_F \IP^{-1}\avg{\abs{\nabla_h u - \LProj(\nabla_h u)}}^2\ds \\ 
 \leq2 \sum_{\ELE\in\MESH} \left(\max_{F\subset\partial\ELE}  \left.\IP^{-1}\right\vert_{F}\right)\left(\norm{\nabla u - \Pi (\nabla u)}_{L^2(\partial\ELE)}^2 + \norm{\LProj(\Pi (\nabla u) - \nabla u)}_{L^2(\partial\ELE)}^2 \right).
\end{multline*}
Applying the inverse inequality from \lemmaref{lemma:inverse_ineq} to the second term, followed by the approximation bounds stated in Corollary~\ref{corollary:coarse:approximation}, together with the $L^2$-stability of $\LProj$, we get
\begin{multline*}
\sum_{F\in\FACE}\int_F \IP^{-1}\avg{\abs{\nabla_h u - \LProj(\nabla_h u)}}^2\ds
\\ \leq  2 \sum_{\ELE\in\MESH} \left(\max_{F\subset\partial\ELE} \left.\IP^{-1}\right\vert_{F}\right)\left(C_{I,2}^\prime \frac{H_{\ELE}^{2S_{\ELE}-3}}{P_{\ELE}^{2L_{\ELE}-3}} + C_{I,1}^\prime C_sC_{\mathrm{INV}}\, d \frac{H_{\ELE}^{2S_{\ELE}-3}}{P_{\ELE}^{2L_{\ELE}-4}}\right)\norm{u}_{H^{L_{\ELE}}(\ELE)}^2.
\end{multline*}
Combining \eqref{theorem:coarse_priori:proof:step1} and the above bounds with \lemmaref{lemma:abstract:priori} completes the proof.
\end{proof}

\subsection{Two-grid \emph{a priori} error bound}
We now consider the derivation of an \emph{a priori} error bound for the two-grid DGFEM \eqref{eqn:tg_fem:coarse}--\eqref{eqn:tg_fem:fine}. To this end, we first recall the following \emph{a priori} error bound for the standard DGFEM approximation \eqref{eqn:standard_fem} of the quasilinear problem \eqref{eqn:quasilinear_eqn}.
\begin{lemma}
\label{lemma:fine_priori}
Assuming that $u\in C^1(\Omega)$ and $u\vert_{\ele}\in H^{l_{\ele}}(\ele)$, $l_{\ele} \geq 2$, for $\ele\in\mesh$ then the solution $\uh\in\dgspace$ of \eqref{eqn:standard_fem} satisfies the error bound
\[
\norm{u-\uh}_{hp}^2 \leq C_4 \sum_{\ele\in\mesh} \frac{h_{\ele}^{2s_{\ele}-2}}{p_{\ele}^{2l_{\ele}-3}} \norm{u}_{H^{l_{\ele}}(\ele)}^2,
\]
with $1\leq s_{\ele} \leq \min(p_{\ele}+1,l_{\ele})$, $p_{\ele}\geq 1$, for $\ele\in\mesh$, and $C_4$ is a positive constant independent of $u$, $h$, and $\vect p$, but depends on the constants $m_{\mu}$, $M_{\mu}$, $C_1$, and $C_2$ from the monotonicity properties of $\mu(\cdot)$.
\end{lemma}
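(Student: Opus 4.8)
The plan is to treat this as the genuinely consistent, standard-element special case of the coarse-mesh analysis, so the argument mirrors that of \theoremref{theorem:coarse_priori} but is in fact simpler. First I would record the fine-mesh analogues of \lemmaref{lemma:coarse:monotonicity_continuity}: using the coercivity of \lemmaref{lemma:coercivity} together with the pointwise bounds \eqref{eqn:nl_bound:upper}--\eqref{eqn:nl_bound:lower}, the form $\Ah(\cdot;\cdot,\cdot)$ is strongly monotone and Lipschitz continuous on $H^1(\Omega)+\dgspace$. On standard (simplicial/tensor-product) elements the interior penalty flux term is controlled directly by the local inverse inequalities underlying \eqref{eqn:ip:fine}, so no $L^2$-projection $\LProj$ need be inserted into the definition of $\Ah$, in contrast to the extended coarse form $\AV(\cdot,\cdot)$.

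The key simplification compared to \theoremref{theorem:coarse_priori} is exact consistency. Because $u\in C^1(\Omega)$, the jumps $\jump{u}$ vanish on all faces and the flux $\mu(\abs{\nabla u})\nabla u$ is single-valued across interior faces; integrating the volume term by parts element-by-element and invoking $-\nabla\cdot(\mu(\abs{\nabla u})\nabla u)=f$ then yields the Galerkin orthogonality $\Ah(u;u,\vh)=\Fh(\vh)$ for all $\vh\in\dgspace$. Consequently the consistency (supremum) term that appears for the agglomerated coarse form drops out entirely, and the monotone-operator argument (the fine-mesh counterpart of \lemmaref{lemma:abstract:priori}) reduces to $\norm{u-\uh}_{hp}\leq(1+C_{\mathrm{cont}}/C_{\mathrm{mono}})\inf_{\vh\in\dgspace}\norm{u-\vh}_{hp}$.

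It then remains to bound the best-approximation error. I would take $\vh$ to be a standard $hp$-quasi-interpolant of $u$ on $\mesh$ and insert the classical $hp$-approximation estimates on standard element shapes, splitting $\norm{u-\vh}_{hp}^2$ into the broken-gradient and penalty contributions as in \eqref{eqn:norm}. The bulk term scales as $h_\ele^{2s_\ele-2}/p_\ele^{2l_\ele-2}$, whereas the penalty term, combining $\ip=\gamma_{hp}p_F^2/h_F$ with the trace approximation bound $\norm{u-\vh}_{L^2(\partial\ele)}^2\lesssim (h_\ele^{2s_\ele-1}/p_\ele^{2l_\ele-1})\norm{u}_{H^{l_\ele}(\ele)}^2$, scales as $h_\ele^{2s_\ele-2}/p_\ele^{2l_\ele-3}$. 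Summing over $\ele\in\mesh$ and absorbing neighbour ratios into the constant via the bounded local variation of $h$ and $\vect p$ yields the stated estimate.

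The main obstacle is purely the exponent bookkeeping: the penalty/trace contribution decays one power of $p$ slower than the bulk gradient term, so it is the penalty term that dictates the final $p_\ele^{2l_\ele-3}$ rate, and care is needed to track the half-integer $h$- and $p$-powers through the face estimates. Since this is precisely the standard-element bound established in \cite{Houston_QL_priori} and recalled in \cite{CongreveTG}, I would ultimately defer to those references for the routine details.
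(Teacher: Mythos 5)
Your proposal is correct and follows essentially the same route as the paper, which for this lemma simply defers to \cite{Houston_QL_priori}: consistency of the incomplete interior penalty form for $u\in C^1(\Omega)$, a monotonicity/Lipschitz-continuity (C\'ea-type) best-approximation bound, and standard $hp$-approximation and trace estimates, with the penalty/trace contribution accounting for the half-power suboptimality in $p$ reflected in the exponent $2l_{\ele}-3$. The only caveat is that the face terms for the non-polynomial argument $u$ are controlled by the trace estimates afforded by $l_{\ele}\geq 2$ (via an augmented norm) rather than by inverse inequalities, which apply only to polynomials; this does not affect your bookkeeping or the conclusion.
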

\begin{proof}
See \cite{Houston_QL_priori}.
\end{proof}

Employing \theoremref{theorem:coarse_priori} and \lemmaref{lemma:fine_priori}, we now deduce the following error bound for the two-grid approximation defined in \eqref{eqn:tg_fem:coarse}--\eqref{eqn:tg_fem:fine}.
\begin{theorem}
\label{theorem:tg_priori}
Let $\MESH$ be a coarse mesh, constructed by agglomerating elements from a shape-regular fine mesh $\mesh$, satisfying Assumptions \ref{assumption:coarse:shape_regular} and \ref{assumption:coarse:covering}, with $\MESH^{\sharp}=\{\COVERELE\}$ an associated covering of $\MESH$ consisting of $d$-simplices; cf. \defref{def:covering}. If the analytical solution $u\in H^1(\Omega)$ to \eqref{eqn:quasilinear_eqn} satisfies $u\vert_{\ele}\in H^{l_{\ele}}(\ele)$, $l_{\ele} \geq 2$ and $u\vert_{\ELE}\in H^{L_{\ELE}}(\ELE)$, $L_{\ELE} \geq \nicefrac32$, for $\ELE\in\MESH$, such that $\mathfrak{E}u\vert_{\COVERELE}\in H^{L_{\ELE}}(\COVERELE)$, where $\COVERELE\in\MESH^{\sharp}$ with $\ELE\subset\COVERELE$; then,
the solution $\ut\in\dgspace$ of \eqref{eqn:tg_fem:fine} satisfies the error bounds
\begin{align*}
\norm{\uh-\ut}_{hp}^2 & \leq C_4C_5 \sum_{\ele\in\mesh} \frac{h_{\ele}^{2s_{\ele}-2}}{p_{\ele}^{2l_{\ele}-3}} \norm{u}_{H^{l_{\ele}}(\ele)}^2 \\*
    &\qquad+ C_3C_5 \sum_{\ELE\in\MESH} \frac{H_{\ELE}^{2S_{\ELE}-2}}{P_{\ELE}^{2L_{\ELE}-2}} (1+\mathcal{G}_{\ELE}(H_{\ELE},P_{\ELE})) \norm{u}_{H^{L_{\ELE}}(\ELE)}^2,\\
\norm{u-\ut}_{hp}^2 &\leq (1+C_5)C_4 \sum_{\ele\in\mesh} \frac{h_{\ele}^{2s_{\ele}-2}}{p_{\ele}^{2l_{\ele}-3}} \norm{u}_{H^{l_{\ele}}(\ele)}^2 \\*
&\qquad+ C_3 C_5 \sum_{\ELE\in\MESH} \frac{H_{\ELE}^{2S_{\ELE}-2}}{P_{\ELE}^{2L_{\ELE}-2}} (1+\mathcal{G}_{\ELE}(H_{\ELE},P_{\ELE})) \norm{u}_{H^{L_{\ELE}}(\ELE)}^2,
\end{align*}
where $S_{\ELE} = \min(P_{\ELE}+1,L_{\ELE})$, for $\ELE\in\MESH$, $s_{\ele} = \min(p_{\ele}+1,l_{\ele})$, for $\ele\in\mesh$,
\[
    \mathcal{G}_{\ELE}(H_{\ELE},P_{\ELE})\coloneqq \frac{P_{\ELE}+P_{\ELE}^2}{H_{\ELE}}\max_{F\subset\partial\ELE} \left.\IP^{-1}\right\vert_F + \frac{H_{\ELE}}{P_{\ELE}}\max_{F\subset\partial\ELE} \left.\IP\right\vert_F
\]
and $C_5$ is a positive constant independent of $u$, $h$, $H$, $\vect p$, and $\vect P$, but depends on the constants $m_{\mu}$, $M_{\mu}$, $C_1$, and $C_2$ from the monotonicity properties of $\mu(\cdot)$.
\end{theorem}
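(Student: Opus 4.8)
The plan is to estimate the two-grid perturbation $\norm{\uh-\ut}_{hp}$ by exploiting the coercivity of the linearised form, and then to obtain the second bound by a triangle inequality. First I would set $w=\uh-\ut\in\dgspace$ and apply \lemmaref{lemma:coercivity} to the form $\Ah(\uH;\cdot,\cdot)$ (this is legitimate since $\DGSPACE\subseteq\dgspace$, so $\uH\in\dgspace$), which gives $C_c\norm{w}_{hp}^2\leq\Ah(\uH;w,w)$. Testing the nonlinear fine problem \eqref{eqn:standard_fem} and the linear fine problem \eqref{eqn:tg_fem:fine} with the same $w$, both right-hand sides equal $\Fh(w)$, so by linearity in the second argument $\Ah(\uH;w,w)=\Ah(\uH;\uh,w)-\Ah(\uh;\uh,w)$. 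Since the interior-penalty contribution of $\Ah(\phi;\cdot,\cdot)$ does not depend on the first argument $\phi$, the penalty terms cancel and this difference reduces to the volume and face contributions involving only the nonlinearity evaluated at the two distinct linearisation points, namely the quantity $[\mu(\abs{\nabla_h\uH})-\mu(\abs{\nabla_h\uh})]\nabla_h\uh$.

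The key algebraic step is to bound this nonlinearity difference pointwise. Writing $\vect{a}=\nabla_h\uH$ and $\vect{b}=\nabla_h\uh$, I would use the identity $[\mu(\abs{\vect a})-\mu(\abs{\vect b})]\vect b=[\mu(\abs{\vect a})\vect a-\mu(\abs{\vect b})\vect b]-\mu(\abs{\vect a})(\vect a-\vect b)$ and then combine the Lipschitz bound \eqref{eqn:nl_bound:upper} on the first bracket with the uniform upper bound $\mu\leq M_\mu$, which follows from the monotonicity property \eqref{eqn:monotonicity} on setting $s=0$. This yields the pointwise estimate $\abs{[\mu(\abs{\nabla_h\uH})-\mu(\abs{\nabla_h\uh})]\nabla_h\uh}\leq(C_1+M_\mu)\abs{\nabla_h(\uH-\uh)}$, thereby replacing the nonlinearity difference by the broken gradient of $\uH-\uh$.

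It then remains to control the volume and face terms. The volume term is handled directly by Cauchy--Schwarz, producing a factor $\norm{\nabla_h(\uH-\uh)}_{L^2(\Omega)}\norm{w}_{hp}$. For each fine face $F\in\face$ I would insert the weight $\ip$ via Cauchy--Schwarz, bound the resulting $\ip^{-1}$-weighted trace of $\nabla_h(\uH-\uh)$ using the standard fine-mesh inverse inequality (both $\uH,\uh\in\dgspace$, so their gradients are elementwise polynomial) together with the bounded local variation of $h$ and $\vect{p}$, and absorb the remaining factor into $\norm{w}_{hp}$. Crucially, this produces only the broken-gradient norm $\norm{\nabla_h(\uH-\uh)}_{L^2(\Omega)}$ and no penalty contribution, so after dividing through we arrive at $\norm{\uh-\ut}_{hp}\leq C\norm{\nabla_h(\uH-\uh)}_{L^2(\Omega)}$.

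Finally, I would insert $u$ and use $\norm{\nabla_h(\uH-\uh)}_{L^2(\Omega)}\leq\norm{\nabla_h(u-\uH)}_{L^2(\Omega)}+\norm{\nabla_h(u-\uh)}_{L^2(\Omega)}\leq\norm{u-\uH}_{HP}+\norm{u-\uh}_{hp}$, since the broken gradient of any $\DGSPACE$ function coincides on the fine and coarse partitions and each gradient term is dominated by the corresponding energy norm. Substituting the coarse bound of \theoremref{theorem:coarse_priori} and the fine bound of \lemmaref{lemma:fine_priori} then gives the first estimate after squaring, with $C_5$ absorbing the numerical constants. The second estimate follows from $\norm{u-\ut}_{hp}\leq\norm{u-\uh}_{hp}+\norm{\uh-\ut}_{hp}$, squaring, and reapplying \lemmaref{lemma:fine_priori}. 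I expect the main obstacle to be the face contribution: it must be estimated without incurring the mismatch between the fine penalty $\ip$ and the coarse penalty $\IP$ on the coarse faces. This is resolved precisely by the observation that the nonlinearity-difference estimate produces only $\norm{\nabla_h(\uH-\uh)}_{L^2(\Omega)}$, which carries no penalty weight and is therefore controlled directly by the two \emph{a priori} gradient bounds.
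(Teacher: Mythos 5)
Your argument is essentially the paper's own proof: coercivity of $\Ah(\uH;\cdot,\cdot)$ applied to $\uh-\ut$, the identity $\Ah(\uh;\uh,\cdot)=\Ah(\uH;\ut,\cdot)$, the pointwise bound $\abs{(\mu(\abs{\nabla_h\uh})-\mu(\abs{\nabla_h\uH}))\nabla_h\uh}\leq(C_1+M_\mu)\abs{\nabla_h(\uh-\uH)}$ obtained by adding and subtracting $\mu(\abs{\nabla_h\uH})\nabla_h\uH$, a weighted Cauchy--Schwarz treatment of the face terms, insertion of $u$, the coarse and fine \emph{a priori} bounds, and a final triangle inequality for the second estimate. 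The only (harmless, arguably cleaner) deviation is the ordering: you apply the polynomial inverse inequality to the discrete function $\nabla_h(\uH-\uh)$ and only then insert $u$ at the level of $L^2$ gradient norms, whereas the paper first inserts $u$ and then invokes an $hp$-version trace inequality on the non-polynomial quantities $\nabla_h(u-\uh)$ and $\nabla_h(u-\uH)$.
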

\begin{remark}
Assuming that $\vect{P}$ is of local bounded variation, we note that due to the definition of $\IP$, and the fact that $P_{\ELE}\geq 1$ for all $\ELE\in\MESH$, we have that $\mathcal{G}_{\ELE}(H_{\ELE},P_{\ELE}) \leq C P_{\ELE}$, for some positive constant $C$, independent of mesh size and polynomial degree. Therefore, we note that the terms in the second error bound have the same order for the polynomial degree and mesh size in both the coarse and fine mesh discretization parameters, which is analogous to the case when non-agglomerated coarse meshes, i.e. coarse meshes consisting of standard element types, are employed; cf. \cite{CongreveTG}
\end{remark}
\begin{proof}
By application of the triangle inequality, we get
\[
\norm{u-\ut}_{hp} \leq \norm{u-\uh}_{hp} + \norm{\ut-\uh}_{hp};
\]
hence, once the first bound stated in Theorem~\ref{theorem:tg_priori} has been established, then together with Lemma~\ref{lemma:fine_priori}, the second bound follows immediately.

For the first bound, we follow the proof in \cite[Theorem 3.1]{CongreveTG}. From the definition of the standard DGFEM formulation \eqref{eqn:standard_fem} and the fine grid approximation \eqref{eqn:tg_fem:fine} we have that
\[
    \Ah(\uh;\uh,\vh) = \Ah(\uH;\ut,\vh) \qquad \text{for all }\vh\in\dgspace.
\]
Therefore, letting $\phi=\ut-\uh\in\dgspace$, from  \lemmaref{lemma:coercivity}, we deduce that
\begin{align*}
C_c\norm{\phi}_{hp}^2 &\leq \Ah(\uh; \uh, \phi) - \Ah(\uH; \uh, \phi) \\*
&\leq \sum_{\ele\in\mesh} \int_{\ele} \abs{(\mu(\abs{\nabla\uh})-\mu(\abs{\nabla\uH}))\nabla\uh}\abs{\nabla\phi}\dx \\*
    &\qquad +\sum_{F\in\face}\int_F \avg{\abs{(\mu(\abs{\nabla_h\uh})-\mu(\abs{\nabla_h\uH}))\nabla_h\uh}}\abs{\jump{\phi}}\ds .
\end{align*}
Adding and subtracting $\mu(\abs{\nabla \uH})\nabla\uH$ to both terms on the right-hand side, then applying the triangle inequality, together with \eqref{eqn:monotonicity} and \eqref{eqn:nl_bound:upper} gives
\begin{align*}
&\norm{\phi}_{hp}^2 \\
&\leq \frac{C_1+M_\mu}{C_c}\left(\sum_{\ele\in\mesh} \int_{\ele} \abs{\nabla(\uh-\uH)}\abs{\nabla\phi}\dx +\sum_{F\in\face}\int_F \avg{\abs{\nabla_h(\uh-\uH)}}\abs{\jump{\phi}}\ds\right) \\
&\begin{multlined}
\leq \frac{C_1+M_\mu}{C_c} \left( \left(\sum_{\ele\in\mesh}\norm{\nabla(u-\uh)}_{L^2(\ele)}^2+\sum_{F\in\face}\int_F \ip^{-1}\abs{\avg{\abs{\nabla_h(u-\uh)}}}^2\ds\right)^{\nicefrac12} \right. \\
+\left.\left(\sum_{\ELE\in\MESH}\norm{\nabla(u-\uH)}_{L^2(\ELE)}^2+\sum_{F\in\face}\int_F \ip^{-1}\abs{\avg{\abs{\nabla_h(u-\uH)}}}^2\ds\right)^{\nicefrac12}  \right)\norm{\phi}_{hp}.
\end{multlined}
\end{align*}
Therefore, applying a standard $hp$--version trace inequality, along with local bounded variation of the mesh parameters, we deduce that
\begin{multline*}
\norm{\ut-\uh}_{hp} \\\leq \frac{\sqrt{C_5}}{2} \left(\left( \sum_{\ele\in\mesh}\norm{\nabla(u-\uh)}_{L^2(\ele)}^2 \right)^{\nicefrac12} + \left( \sum_{\ELE\in\MESH}\norm{\nabla(u-\uH)}_{L^2(\ELE)}^2 \right)^{\nicefrac12} \right),
\end{multline*}
where $C_5$ is a positive constant independent of $u$, $h$, $H$, $\vect p$, and $\vect P$, but depends on the constants $m_{\mu}$, $M_{\mu}$, $C_1$, and $C_2$.
Applying \lemmaref{lemma:fine_priori} and \theoremref{theorem:coarse_priori} completes the proof.
\end{proof}
A numerical example validating these bounds can be found in our conference article \cite{CongreveGAMM2019}.

\section{\emph{A posteriori} error estimation and two-grid \texorpdfstring{$hp$}{hp}-adaptive refinement}
\label{sec:apost_and_adaptivity}

We note that the existing \emph{a posteriori} error bound \cite[Theorem 3.2]{CongreveTG} still holds for an agglomerated coarse mesh, as the only requirement on the coarse mesh is the fact that $\DGSPACE\subset\dgspace$, which is still true in the current setting. For completeness we reproduce the error bound here.

\begin{theorem}
\label{theorem:posteriori}
Let $u\in H^1(\Omega)$ be the analytical solution of~\eqref{eqn:quasilinear_eqn}, $\uH\in\DGSPACE$ the numerical approximation obtained from~\eqref{eqn:tg_fem:coarse}, and $\ut\in\dgspace$ the numerical approximation computed from~\eqref{eqn:tg_fem:fine}; then, the following $hp$-\emph{a posteriori} error bound holds
\[
    \norm{u-\ut}_{hp} \leq C_6 \left( \sum_{\ele\in\mesh} (\eta_{\ele}^2 + \xi_{\ele}^2) + \sum_{\ele\in\mesh} \norm{f-\Pi_{L^2}f}_{L^2(\ele)}^2\right)^{\nicefrac12},    
\]
with a constant $C_6>0$, which is independent of $\vect{h}$, $\vect{H}$, $\vect{p}$, and $\vect{P}$. Here, $\Pi_{L^2}$ is the $L^2$-projection onto the fine grid finite element space $\dgspace$, the local \emph{fine grid error indicators} $\eta_{\ele}$ are defined, for all $\ele\in\mesh$, by
\begin{multline}
\label{eqn:error_indicators:fine}
\eta_{\ele}^2 = h_{\ele}^2p_{\ele}^{-2}\norm{\Pi_{L^2}f+\nabla\cdot(\mu(\abs{\nabla \uH})\nabla \ut)}_{L^2(\ele)}^2 \\
    +h_{\ele} p_{\ele}^{-1} \norm{\jump{\mu(\abs{\nabla \uH})\nabla \ut}}_{L^2(\partial \ele\setminus\Gamma)}^2
    + \gamma^2 h_{\ele}^{-1}p_{\ele}^3 \norm{\jump{\ut}}_{L^2(\partial \ele)}^2,
\end{multline}
and the local \emph{two-grid error indicators} $\xi_{\ele}$ are defined, for all $\ele\in\mesh$, by
\begin{equation}
\label{eqn:error_indicators:twogrid}
\xi_{\ele}^2 = \norm{(\mu(\abs{\nabla \uH})-\mu(\abs{\nabla \ut}))\nabla \ut}_{L^2(\ele)}^2.
\end{equation}
\end{theorem}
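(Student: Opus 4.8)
The plan is to carry over the residual-based argument of \cite[Theorem 3.2]{CongreveTG} essentially unchanged, the crucial structural fact being that the sole property of the coarse space used there is $\DGSPACE\subseteq\dgspace$, which remains valid for agglomerated coarse meshes. In particular, $\uH$ is itself a legitimate fine-grid DGFEM function, so $\mu(\abs{\nabla\uH})$ is well defined element-wise on $\mesh$ and possesses no spurious jumps across fine faces interior to a coarse element; consequently every fine-mesh error indicator in \eqref{eqn:error_indicators:fine}--\eqref{eqn:error_indicators:twogrid} is computable and the fine-grid residual machinery applies verbatim. I outline the main steps below.

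First I would work with a consistent extension of the fine-grid semilinear form to $H^1(\Omega)+\dgspace$, establishing its strong monotonicity and Lipschitz continuity on this enlarged space by arguing exactly as in \lemmaref{lemma:coarse:monotonicity_continuity}, now on the fine mesh and using the structural bounds \eqref{eqn:nl_bound:lower}--\eqref{eqn:nl_bound:upper}. Testing the strong monotonicity bound with $e=u-\ut$ and invoking consistency of the incomplete interior penalty discretisation together with the fact that $u$ solves \eqref{eqn:quasilinear_eqn}, the estimate reduces to bounding the residual of the original nonlinear problem evaluated at the computed solution $\ut$ and tested against $e$, normalised by $\norm{e}_{hp}$.

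Next I would split the residual into a conforming and a nonconforming contribution. Using an averaging (Oswald-type) operator mapping $\dgspace$ into its conforming subspace $\dgspace\cap H_0^1(\Omega)$, I write $\ut=\ut^c+\ut^\perp$; the nonconforming remainder $\ut^\perp$ is bounded in the energy norm solely by the interelement jumps $\jump{\ut}$, which yields the jump term $\gamma^2 h_{\ele}^{-1}p_{\ele}^3\norm{\jump{\ut}}_{L^2(\partial\ele)}^2$ of $\eta_{\ele}$. For the conforming contribution I would exploit the Galerkin orthogonality of the two-grid scheme, $\Ah(\uH;\ut,\vh)=\Fh(\vh)$ for all $\vh\in\dgspace$: subtracting a quasi-interpolant of Cl\'ement/Scott--Zhang type (with the standard $hp$ scaling) of the conforming test function and integrating by parts element-wise generates the interior residual $\Pi_{L^2}f+\nabla\cdot(\mu(\abs{\nabla\uH})\nabla\ut)$ and the flux jumps $\jump{\mu(\abs{\nabla\uH})\nabla\ut}$, weighted by the interpolation error. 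Standard $hp$-approximation and trace inequalities then supply the $h_{\ele}^2 p_{\ele}^{-2}$ and $h_{\ele}p_{\ele}^{-1}$ factors in $\eta_{\ele}$, while replacing $f$ by $\Pi_{L^2}f$ produces the data-oscillation term $\norm{f-\Pi_{L^2}f}_{L^2(\ele)}$.

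The step genuinely specific to the two-grid setting is that the discrete equation freezes the diffusion coefficient at $\uH$, whereas the monotone form against which the error is measured carries $\mu(\abs{\nabla\ut})$. I would therefore add and subtract $\mu(\abs{\nabla\ut})\nabla\ut$, so that the coefficient discrepancy appears explicitly as $(\mu(\abs{\nabla\uH})-\mu(\abs{\nabla\ut}))\nabla\ut$; a direct Cauchy--Schwarz estimate of this quantity gives precisely the two-grid indicator $\xi_{\ele}$ of \eqref{eqn:error_indicators:twogrid}. Collecting all contributions, applying Cauchy--Schwarz over the elements and faces of $\mesh$, and dividing through by $\norm{e}_{hp}$ then yields the asserted bound, with a reliability constant $C_6$ independent of $\vect{h}$, $\vect{H}$, $\vect{p}$, and $\vect{P}$. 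I expect the principal obstacle to be the parameter-uniform construction of the conforming reconstruction and the accompanying $hp$-interpolation and trace estimates, which must be tracked carefully so that the powers of $p_{\ele}$ in $\eta_{\ele}$ emerge correctly; by comparison, isolating and bounding the frozen-coefficient term $\xi_{\ele}$ is routine once the decomposition is in place.
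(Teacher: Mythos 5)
Your proposal is correct and follows essentially the same route as the paper: the paper's entire proof of Theorem~\ref{theorem:posteriori} is precisely the observation in your opening paragraph, namely that \cite[Theorem 3.2]{CongreveTG} applies verbatim because the only property of the coarse space used there is $\DGSPACE\subseteq\dgspace$, which survives agglomeration. The remainder of your outline is a faithful reconstruction of the cited residual-based argument (extended monotone form, Oswald-type conforming/nonconforming splitting, $hp$-quasi-interpolation of the conforming part, and the frozen-coefficient term yielding $\xi_{\ele}$), so no further comment is needed.
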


For the two-grid DGFEM discretization defined by \eqref{eqn:tg_fem:coarse}--\eqref{eqn:tg_fem:fine} it is necessary to refine both fine and coarse meshes, together with their corresponding polynomial degree vectors, in order to decrease the error between $u$ and $\ut$ with respect to the energy norm $\norm{\cdot}_{hp}$. We note that, from \theoremref{theorem:posteriori}, we have, for each fine element $\ele\in\mesh$, a local error indicator $\eta_{\ele}$ and a local two-grid error indicator $\xi_{\ele}$. The local error indicator $\eta_{\ele}$ is similar to the one which arises within the analysis of the standard DGFEM discretization and, hence, represents the error arising from the linear fine grid solve \eqref{eqn:tg_fem:fine}; whereas, the local two-grid error indicator $\xi_{\ele}$ represents the error stemming from the approximation of the nonlinear coefficient $\mu(\abs{\nabla \uh})$ on the fine mesh by the nonlinear coefficient evaluated with respect to the coarse grid solution $\uH$. To this end, we can consider a modified version of the two-grid mesh refinement algorithm \cite[Algorithm 1 \& Algorithm 2]{CongreveNonNewtonTG} to allow for a coarse mesh  consisting of agglomerated fine mesh elements.
\begin{algo}
\label{algo:refinement}
The fine and coarse finite element spaces $\dgspace$ and $\DGSPACE$ are refined as follows.
\begin{enumerate}
\item Initial Step: Select an initial fine mesh $\mesh$ and initial fine mesh polynomial degree distribution $\vect{p}$. Create a coarse mesh $\MESH$ by element agglomeration/graph partitioning (e.g., by METIS \cite{Karypis1999}) and assign a polynomial degree distribution $\vect{P}$, such that $\DGSPACE\subseteq\dgspace$.
\item Solve~\eqref{eqn:tg_fem:coarse}--\eqref{eqn:tg_fem:fine} to determine $\uH$ and $\ut$, respectively. \label{algo:refinement:initial_step}
\item Select elements for refinement based on the local fine grid error indicators $\eta_{\ele}$ and the local two-grid error indicators $\xi_{\ele}$, from~\eqref{eqn:error_indicators:fine} and~\eqref{eqn:error_indicators:twogrid}, respectively:
    \begin{enumerate}
    \item Determine the set $\mathfrak{R}(\mesh)\subseteq\mesh$ of \emph{potential} elements to refine based on $\sqrt{\eta_{\ele}^2+\xi_{\ele}^2}$ using a standard refinement strategy, e.g., the fixed fraction strategy.
    \item For all elements selected for refinement decided whether to perform refinement on the fine or coarse mesh. For all $\ele\in\mathfrak{R}(\mesh)$:
        \begin{itemize}
        \item if $\lambda_F\xi_{\ele}\leq\eta_{\ele}$ refine the fine element $\ele$, and
        \item if $\lambda_C\eta_{\ele}\leq\xi_{\ele}$ refine the coarse element $\ELE\in\MESH$, where $\ele\in\mesh(\ELE)$.
        \end{itemize}
    \end{enumerate}
\item Perform $hp$-refinement on the fine mesh $\mesh$ using a standard refinement method; see, for example, \cite{HoustonSuli, Mitchell2011, Mitchell2014, Wihler2011}.
\item For elements marked for refinement in the coarse mesh $\MESH$ determine whether to perform $h$- or $p$-refinement; see, for example, \cite{HoustonSuli, Mitchell2011, Mitchell2014, Wihler2011}.
\item Perform mesh smoothing of the fine mesh to ensure that for any coarse element $\ELE\in\mesh$ marked for $h$-refinement that $\mesh(\ELE)$ contains at least $2^d$ fine mesh elements. Also, for any coarse mesh element $\ELE\in\MESH$ marked for $p$-refinement, if there exists a fine mesh element $\ele\in\mesh(\ELE)$ such that $p_{\ele}=P_{\ELE}$ do not perform $p$-refinement on $\ELE$.
\item Perform $hp$-refinement on the coarse mesh.
\end{enumerate}
Here, $\lambda_F, \lambda_C \in (0,\infty)$ are steering parameters selected such that $\lambda_F\lambda_C \leq 1$.
\end{algo}
\begin{remark}
For the purposes of the numerical experiments in the following section the initial coarse mesh, in Step~\ref{algo:refinement:initial_step} above, is selected by agglomerating the fine mesh into $\lceil\nicefrac{N}{2^d}\rceil$ coarse elements, where $N$ is the number of fine mesh elements, and the initial polynomial degrees for all fine and coarse elements are set to the same polynomial degree; i.e., for a polynomial degree $p$ we set $p_{\ele}=p$ for all $\ele\in\mesh$ and $P_{\ELE}=p$ for all $\ELE\in\MESH$.
\end{remark}

In order to perform refinement on the coarse element we need an algorithm for $h$-refinement of agglomerated elements. The first potential algorithm is a na\"ive approach based on simply agglomerating the sub-elements on a coarse element marked for refinement into smaller elements; cf. \cite{Collis2016}.
\begin{algo}[Na\"ive (Unweighted) Coarse Refinement]
\label{algo:naive}
For each $\ELE\in\MESH$ marked for refinement partition the sub-patch $\mesh(\ELE)$ into $2^d$ elements using graph partitioning (e.g., by METIS \cite{Karypis1999}).
\end{algo}
The standard graph partitioning algorithm subdivides the elements into partitions containing a roughly equal number of elements. However, given that we have information on the likely local error size for each fine mesh element, it should be possible to refine the coarse mesh elements to equidistribute the magnitude of the error indicators to the new elements. To this end, we note that METIS provides a means of performing graph partitioning based on allocating weights for each vertex, cf. \cite{Karypis1998}. Exploiting this procedure, we propose the following alternative algorithm.
\begin{algo}[Weighted Coarse Refinement]
\label{algo:errors}
For each coarse element $\ELE\in\MESH$ marked for refinement, we allocate some weight $\omega_{\ele}\in\real$ to its fine sub-elements $\ele\in\mesh(\ELE)$ based on the error indicators; i.e, we set
\[
    \omega_{\ele}=\eta_{\ele}^2+\xi_{\ele}^2.
\]
We then refine the coarse element $\ELE\in\MESH$ as follows:
\begin{enumerate}
\item Construct an adjacency graph for $\mesh(\ELE)$, with a vertex $\mathcal{N}_{\ele}$ for each element $\ele\in\mesh(\ELE)$, and an edge $\mathcal{E}_F$ connecting the vertices $\mathcal{N}_{\ele}, \mathcal{N}_{\ele'}$ of each pair of elements $\ele,\ele'$ which share a common face $F\in\face[I](\ELE)\coloneqq\{ F\in\face[I] : F=\partial\ele\cap\partial\ele', \ele,\ele'\in\mesh(\ELE) \}$.
\item Assign the weights $\omega_{\ele}$ to the vertex $\mathcal{N}_{\ele}$ for each element $\ele\in\mesh(\ELE)$.
\item Perform graph partitioning on the adjacency graph to partition the graph into $2^d$ sub-graphs such that the sum of the weights in each sub-graph is (roughly) equal.
\item Construct the new refined elements from the sub-graphs.
\end{enumerate}
\end{algo}

\begin{figure}[t]
    \configfigure
    \subfloat[$\mesh$ (gray) and $\MESH$ (black)]{\label{fig:coarse_refine:step1}\includegraphics[width=0.35\textwidth]{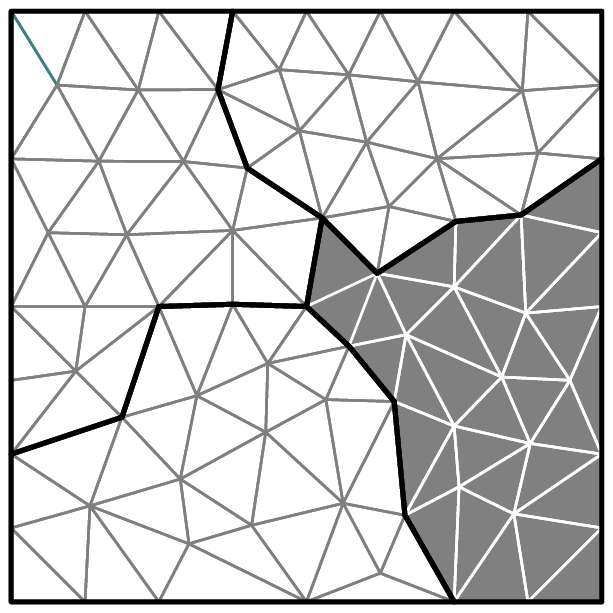}}
    \subfloat[$\mesh(\ELE)$, $\ELE\in\MESH$]{\label{fig:coarse_refine:step2}\includegraphics[width=0.35\textwidth]{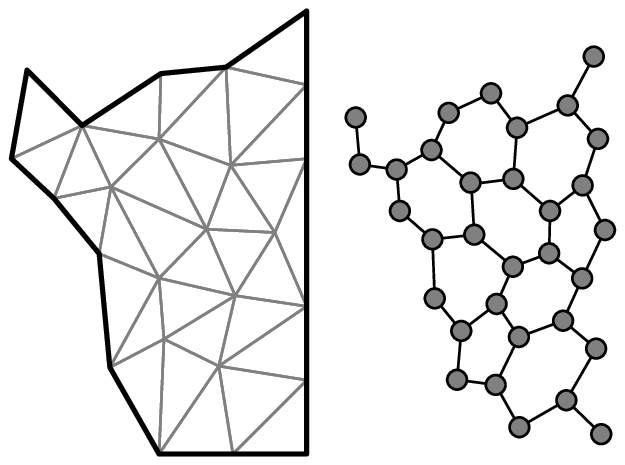}} \\
    \subfloat[\algoref{algo:naive}]{\label{fig:coarse_refine:step3}\includegraphics[width=0.35\textwidth]{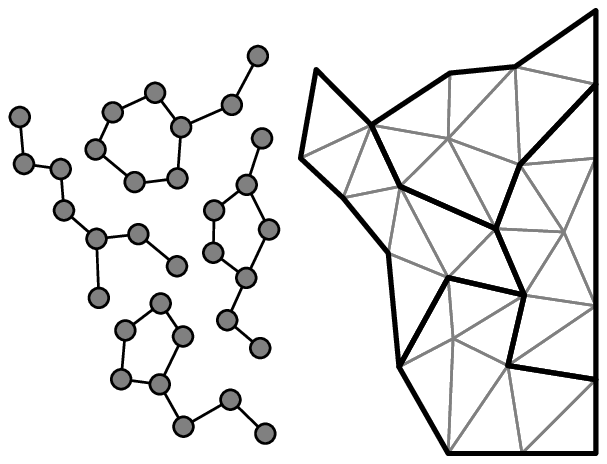}}
    \subfloat[\algoref{algo:errors}]{\label{fig:coarse_refine:step4}\includegraphics[width=0.35\textwidth]{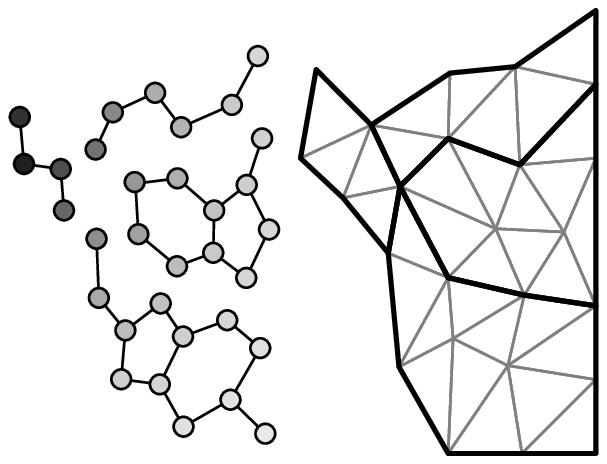}}
    \caption{Coarse element refinement example. \protect\subref{fig:coarse_refine:step1} Fine mesh (gray) agglomerated into 4 coarse elements (black), with shaded element $\ELE\in\MESH$ marked for $h$-refinement; \protect\subref{fig:coarse_refine:step2} The element $\ELE\in\MESH$ marked for $h$-refinement, the constituent fine elements $\mesh(\ELE)$, and the adjacency graph for these fine elements. \protect\subref{fig:coarse_refine:step3} \algoref{algo:naive}. Graph partitioning of the adjacency graph, and the resulting coarse element refinement. \protect\subref{fig:coarse_refine:step3} \algoref{algo:errors}. Graph partitioning of the adjacency graph with vertex weights $\omega_\ele=\eta_{\ele}^2+\xi_{\ele}^2$, $\ele\in\mesh(\ELE)$, denoted by vertex color (black $=0.5$, white $=0$), and the resulting coarse element refinement}
    \label{fig:coarse_refine}
\end{figure}

Note, this algorithm is performed after fine mesh refinement; therefore, we divide the error indicators $\eta_{\ele}$ and $\xi_{\ele}$ of a refined fine mesh element $\ele\in\mesh$ between its new elements. To that end, we change the fine mesh refinement algorithm to compute new \emph{effective} error indicators:
\begin{algo}
We calculate the \emph{effective} error indicators $\eta_{\ele}'$ and $\xi_{\ele}'$ on the fine mesh after mesh refinement from $\eta_{\ele}$ and $\xi_{\ele}$ as follows.
\begin{algorithmic}
\ForAll{$\ele\in\mesh$}
    \If{$\ele$ is marked for $h$-refinement}
        \State Perform $h$-refinement, dividing the element into $N$ children: $\ele_{1},\dots,\ele_{N}$
        \State $\eta_{\ele_{i}}' \gets \frac{\eta_{\ele}}{\sqrt{N}}$, for $i=1,\dots,N$
        \State $\xi_{\ele_{i}}' \gets \frac{\xi_{\ele}}{\sqrt{N}}$, for $i=1,\dots,N$
    \Else
        \State $\eta_{\ele}' \gets \eta_{\ele}$
        \State $\xi_{\ele}' \gets \xi_{\ele}$
    \EndIf
\EndFor
\end{algorithmic}
\end{algo}

\figref{fig:coarse_refine} demonstrates Algorithms~\ref{algo:naive} \&~\ref{algo:errors} for an example coarse element refinement. \subfigref{fig:coarse_refine}{step1} shows an example fine mesh $\mesh$ with corresponding coarse mesh $\MESH$ constructed by agglomerating the fine mesh into four elements, and highlights one coarse element $\ELE\in\MESH$ for $h$-refinement. In \subfigref{fig:coarse_refine}{step2} we take the constituent fine elements $\mesh(\ELE)$ of the element $\ELE\in\MESH$ marked for $h$-refinement and create the matching adjacency graph for these elements. Figs.~\ref{fig:coarse_refine}\subref{fig:coarse_refine:step3} \&~\ref{fig:coarse_refine}\subref{fig:coarse_refine:step4} show how \algoref{algo:naive} and \algoref{algo:errors}, respectively, partition the adjacency graph into $2^d=4$ sub-graphs, and the matching coarse element refinement.

\section{Numerical experiments}\label{section:numerics}

In this section we perform a series of numerical experiments to demonstrate the performance of the \emph{a posteriori} error bound stated in \theoremref{theorem:posteriori}, the $hp$-adaptive mesh refinement strategy outlined in \algoref{algo:refinement}, and the coarse mesh refinement strategies presented in Algorithms~\ref{algo:naive} \&~\ref{algo:errors}, using both $h$- and $hp$-adaptive mesh refinement. We set the interior penalty parameters $\gamma_{hp}$ and $\gamma_{HP}$ in \eqref{eqn:ip:fine} and \eqref{eqn:ip:coarse}, respectively, equal to $10$. For the two steering parameters from \algoref{algo:refinement} we set $\lambda_C=\nicefrac12$ and $\lambda_F=1$. The nonlinear equations are solved by employing a damped Newton iteration \cite[Sect. 14.4]{Ortega}. The solution of the resulting set of linear equations, from either the fine mesh or at each step of the iterative nonlinear solver, is computed using either the direct MUMPS solver \cite{Amestoy}, for two-dimensional problems or an ILU preconditioned GMRES algorithm \cite{Saad}, for the three-dimensional problems presented here. We also calculate effectivity indices by dividing the error bound stated in \theoremref{theorem:posteriori}, with the constant $C_6$ set to 1, by the error computed in the DGFEM energy norm.

For comparison purposes, for each example presented below, in addition to the $h$- and $hp$-version adaptive two-grid algorithms presented in Section~\ref{sec:apost_and_adaptivity}, we also perform $h$- and $hp$-adaptive refinement using the standard DGFEM formulation \eqref{eqn:standard_fem}.

\subsection{Example 1: Smooth analytical solution}
In this example, we let $\Omega$ be the unit square $(0,1)^2\subset\real^2$, define the nonlinear coefficient by
\begin{equation}
\label{eqn:simple_mu}
\mu(\abs{\nabla u}) = 2 + \frac{1}{1+\abs{\nabla u}},
\end{equation}
and select the forcing function $f$ such that the analytical solution to \eqref{eqn:quasilinear_eqn} is given by
\[
u(x,y) = x(1-x)y(1-y)(1-2y)\mathrm{e}^{-20(2x-1)^2}.
\]

\begin{figure}[t]
    \configfigure
    \subfloat[]{\label{fig:square:err}\includegraphics[width=0.45\textwidth]{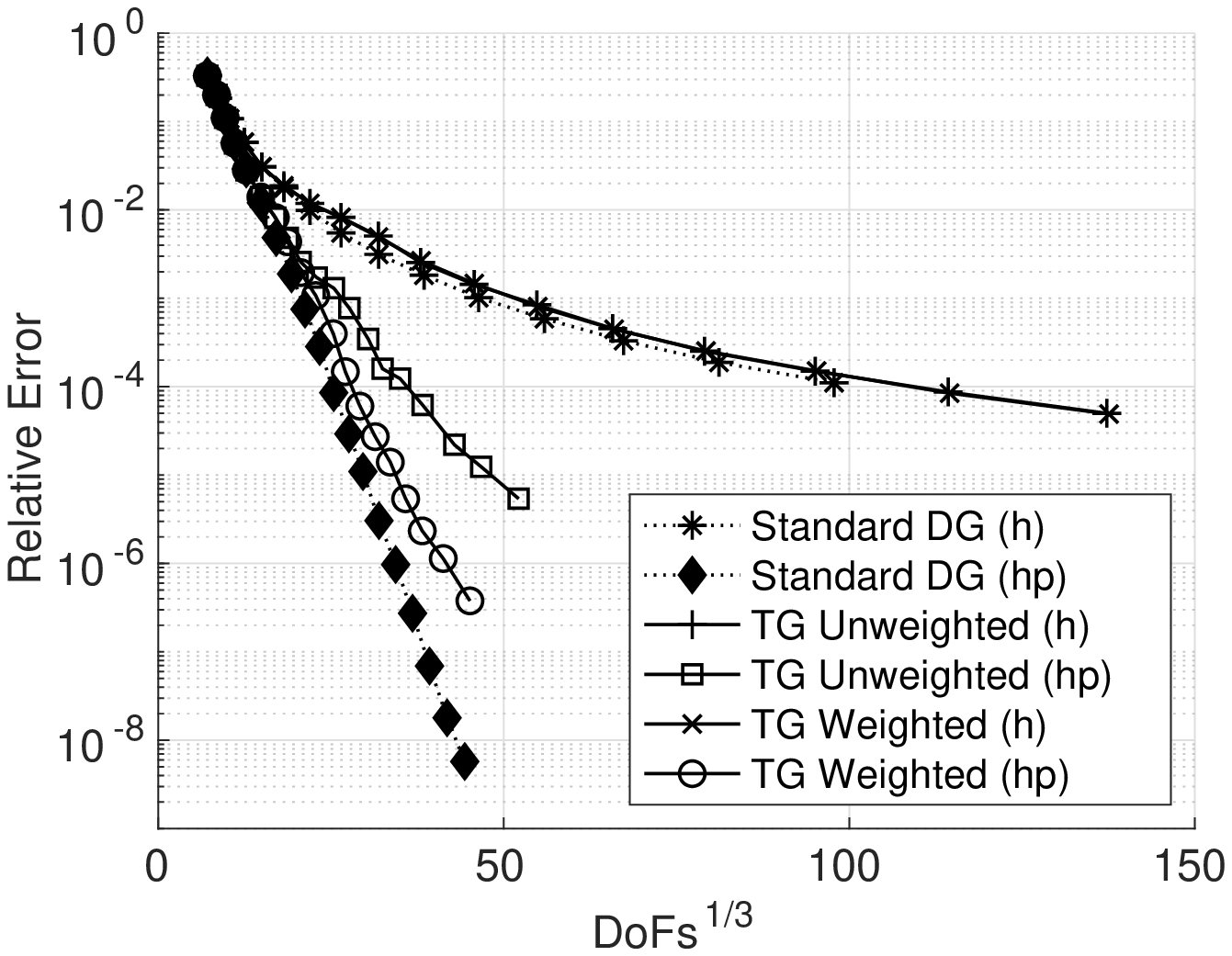}}
    \subfloat[]{\label{fig:square:eff}\includegraphics[width=0.45\textwidth]{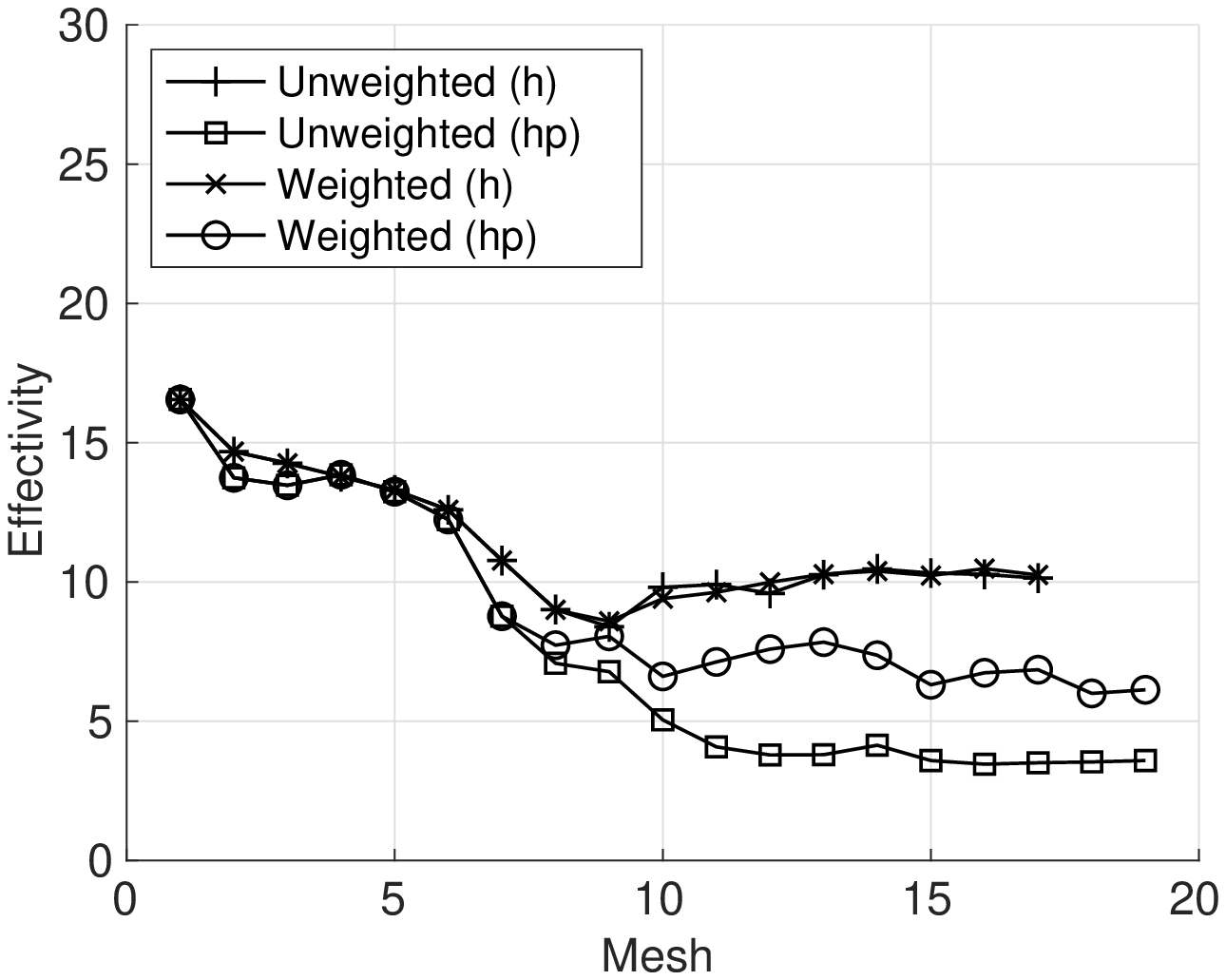}} \\
    \subfloat[]{\label{fig:square:dofs}\includegraphics[width=0.45\textwidth]{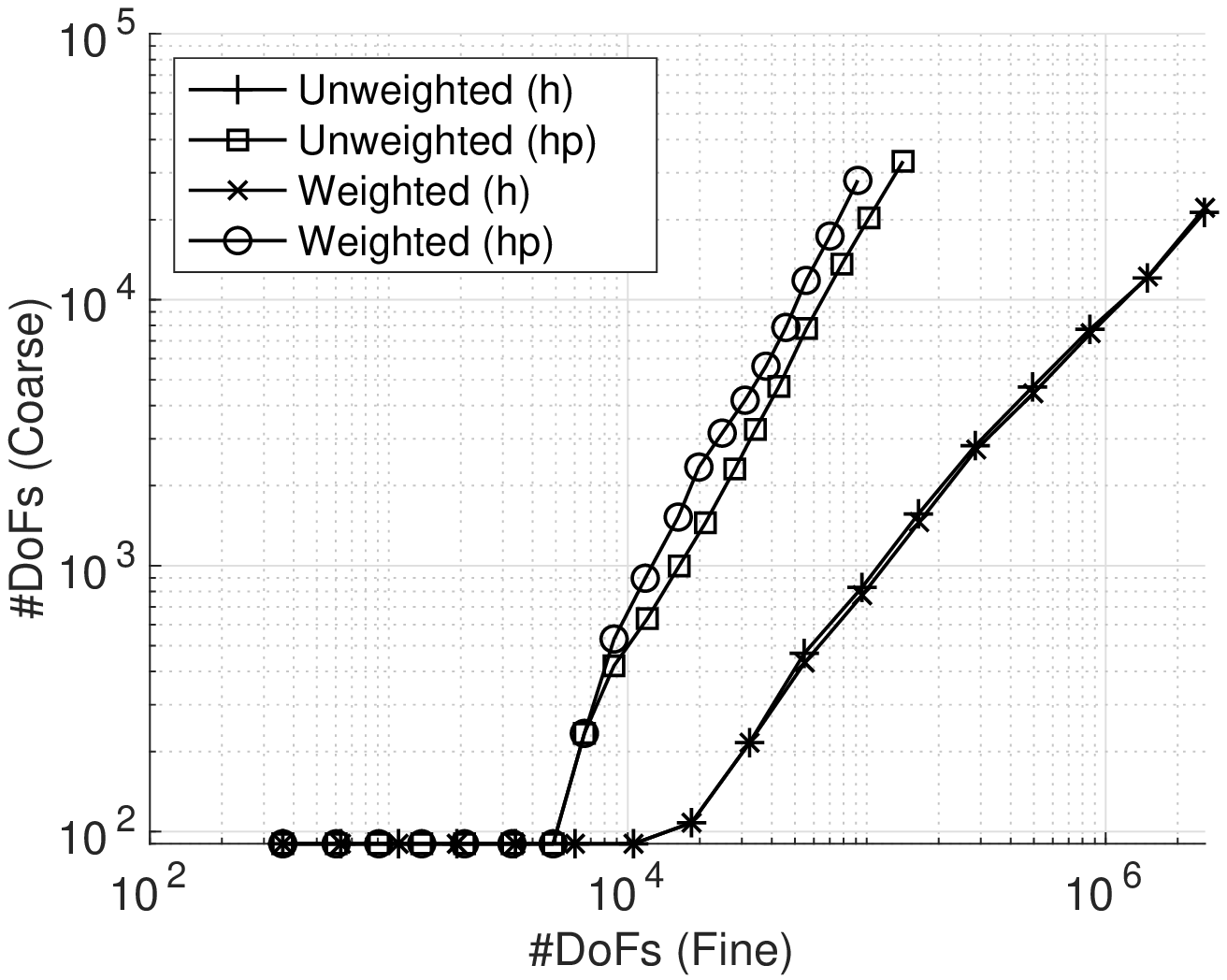}}
    \caption{Example 1. \protect\subref{fig:square:err}~Error in the DG norm with respect to the number of degrees of freedom for the standard method and the two-grid methods, using weighted and unweighted coarse mesh refinement, with $h$- and $hp$-refinement; \protect\subref{fig:square:eff}~Effectivity indices for the two-grid method with $h$- and $hp$-refinement; \protect\subref{fig:square:dofs}~Comparison of the number of degrees of freedom on the coarse and fine mesh for the two-grid methods}
    \label{fig:square}
\end{figure}

In \subfigref{fig:square}{err} we present the relative error measured in the energy norm versus the third root of the number of degrees of freedom (in the fine finite element space $\dgspace$) for the standard DGFEM formulation \eqref{eqn:standard_fem}, together with the corresponding quantities computed based on employing the two-grid DGFEM formulation \eqref{eqn:tg_fem:coarse}--\eqref{eqn:tg_fem:fine} using both \algoref{algo:naive} (TG Unweighted) and \algoref{algo:errors} (TG Weighted) for the coarse mesh refinement. Here, we perform both $h$- and $hp$-adaptive refinement for all methods (independently). We observe that, for the problem at hand, when $h$-refinement is employed the two two-grid methods lead to a slight increase in the error measured in the DGFEM norm, relative to the standard DGFEM formulation, in the sense that for a fixed number of degrees of freedom the latter is slightly superior. In the $hp$-refinement setting, we note that exponential convergence is observed for all three methods as the underlying finite element space is enriched, although we notice that when unweighted coarse mesh refinement procedure, cf. \algoref{algo:naive}, is employed within the two-grid method, then the norm of the error has a noticeably slower rate of convergence. In \subfigref{fig:square}{eff}, we display the effectivity indices calculated by dividing the error bound by the true error measured in the energy norm for each of DGFEMs and refinement strategies employed. We note that initially the effectivity indices drop before roughly stabilizing to a constant, thus indicating that the \emph{a posteriori} error bound overestimates the true error by a roughly consistent amount.

Although \subfigref{fig:square}{err} suggests that the two-grid methods perform worse than the standard DGFEM, when considering the magnitude of the error measured in the DGFEM norm relative to the number of degrees of freedom employed in the fine finite element space $\dgspace$, this degradation is expected since we are only solving a linearized version of the underlying numerical scheme on $\dgspace$. However, as the coarse space $\DGSPACE$ should contain considerably fewer degrees of freedom than $\dgspace$, we expect the two-grid method to be computationally cheaper as it only solves the nonlinear problem on $\DGSPACE$. With this mind, in \subfigref{fig:square}{dofs} we compare the number of degrees of freedom in $\DGSPACE$ and $\dgspace$ for both coarse mesh refinement strategies, Algorithms~\ref{algo:naive} \&~\ref{algo:errors}, when both $h$- and $hp$-refinement are employed. As expected, the number of degrees of freedom in the coarse mesh is considerable lower compared to the fine mesh; furthermore, we notice that both the unweighted, \algoref{algo:naive}, and weighted, \algoref{algo:errors}, coarse mesh refinement algorithms result in a similar number of coarse mesh degrees of freedom compared to the fine mesh.
\begin{figure}[t]
    \configfigure
    \subfloat[$h$-refinement]{\label{fig:square:timing:h}\includegraphics[width=0.45\textwidth]{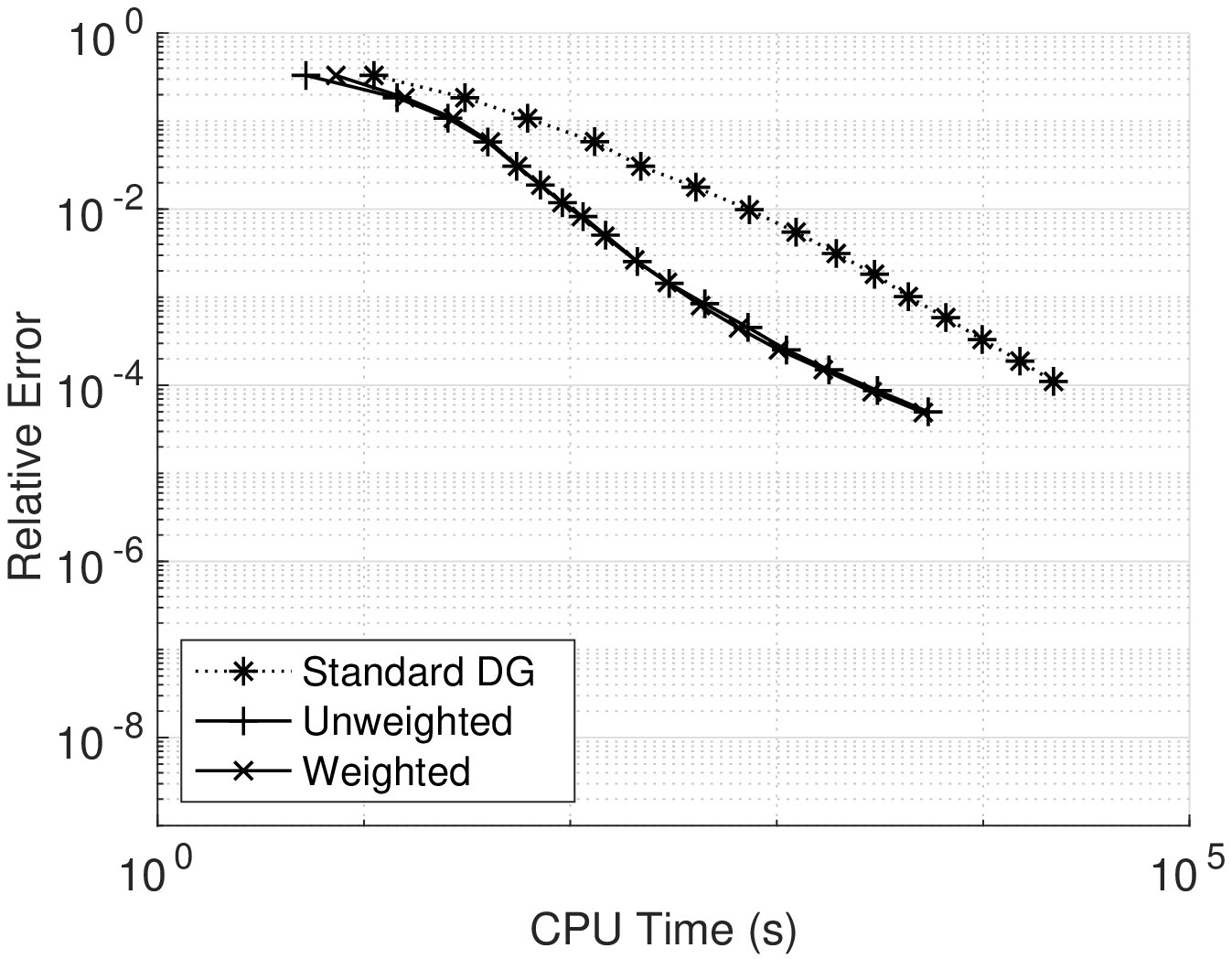}}
    \subfloat[$hp$-refinement]{\label{fig:square:timing:hp}\includegraphics[width=0.45\textwidth]{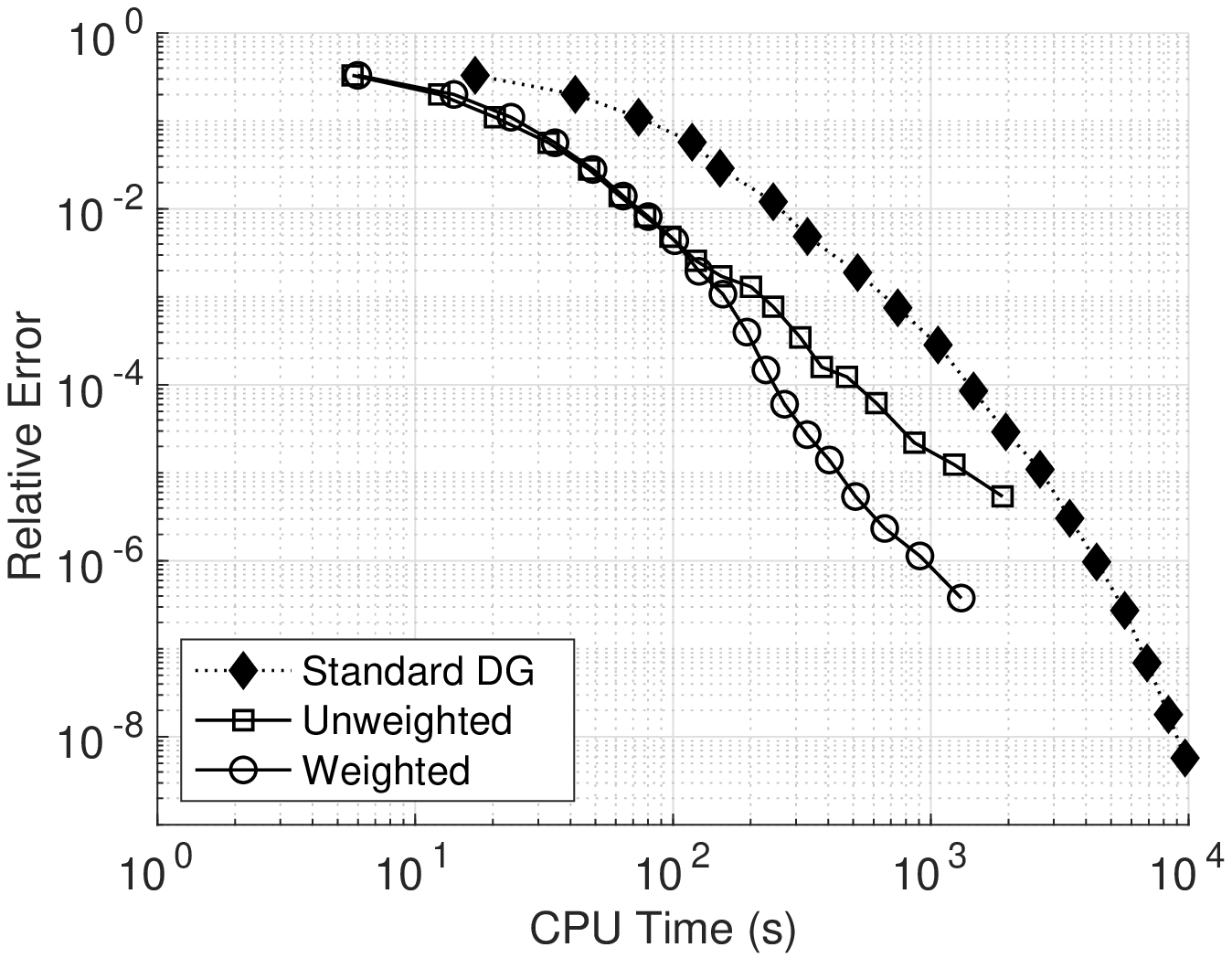}}
    \caption{Example 1. Error in the DG norm with respect to the cumulative CPU time for the standard method and the two-grid methods, using weighted and unweighted coarse mesh refinement, with \protect\subref{fig:square:timing:h}~$h$- and \protect\subref{fig:square:timing:hp}~$hp$-refinement}
    \label{fig:square:timing}
\end{figure}

To investigate this issue further in \figref{fig:square:timing}, we compare the relative error computed in the energy norm with the cumulative computation time for both the standard DGFEM and the two two-grid DGFEMs employing the different coarse mesh refinement strategies, when both $h$- and $hp$-adaptive mesh refinement is exploited. In the $h$-refinement setting the two two-grid DGFEMs lead to around an order of magnitude decrease in the error measured in the DGFEM norm, when compared to the standard DGFEM, for a given fixed computation time.
When $hp$-refinement is employed, the reduction in the error in the two-grid DGFEM compared to the standard DGFEM, for a given fixed amount of computation time, increases to roughly two orders of magnitude when the weighted coarse mesh refinement strategy, cf. \algoref{algo:errors}, is employed. However, when the unweighted coarse mesh refinement algorithm is employed within the two-grid DGFEM, cf. \algoref{algo:naive}, this improvement in the error computed in the DGFEM norm decreases as refinement progresses; this is caused by the noticeably slower rate of convergence observed in \subfigref{fig:square}{err}.
This result, along with the fact that both coarse mesh refinement algorithms result in a broadly similar number of degrees of freedom on the coarse mesh, suggest that the weighted \algoref{algo:errors} coarse mesh refinement is a superior refinement strategy in the $hp$-setting.

\begin{figure}[t]
    \configfigure
    \subfloat[Coarse ($h$-refinement)]{\label{fig:square:mesh:h:coarse}\includegraphics[width=0.45\textwidth]{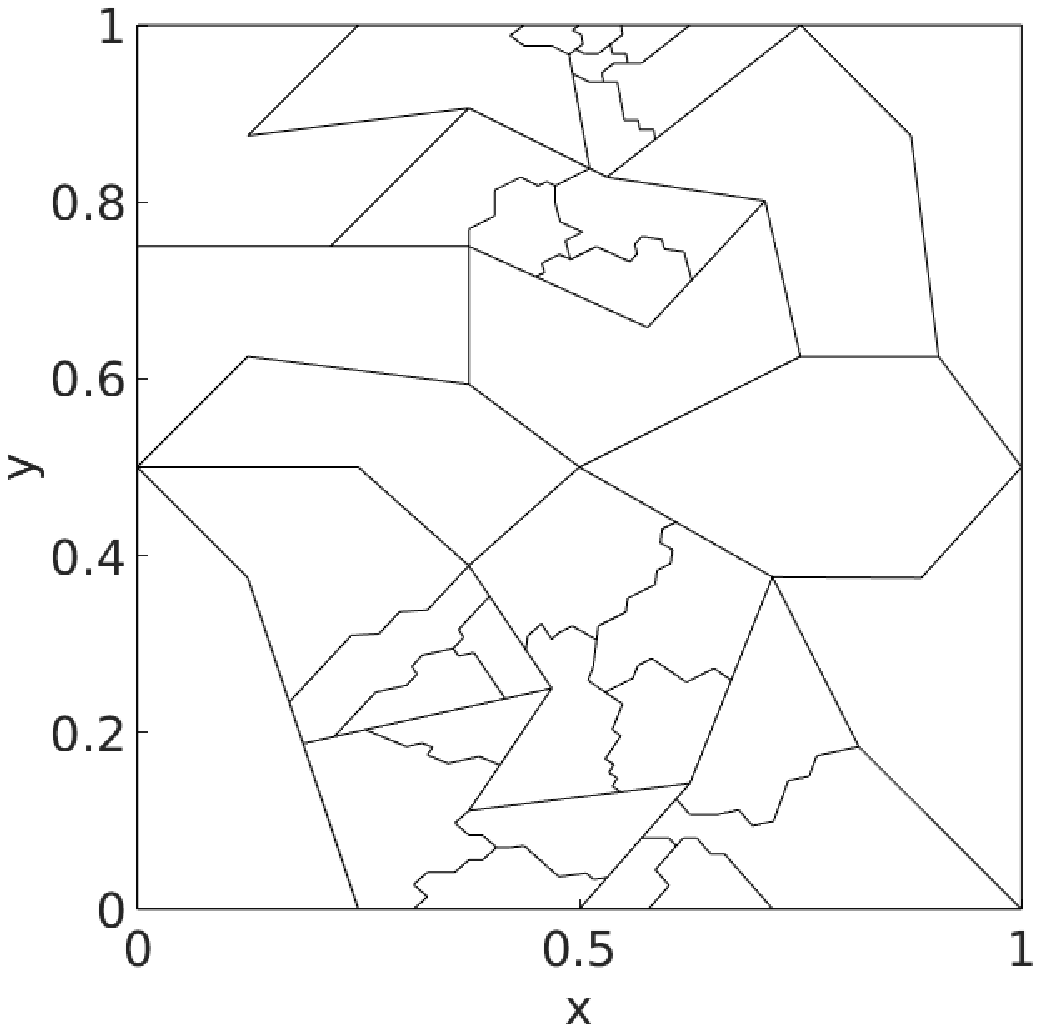}}
    \subfloat[Fine ($h$-refinement)]{\label{fig:square:mesh:h:fine}\includegraphics[width=0.45\textwidth]{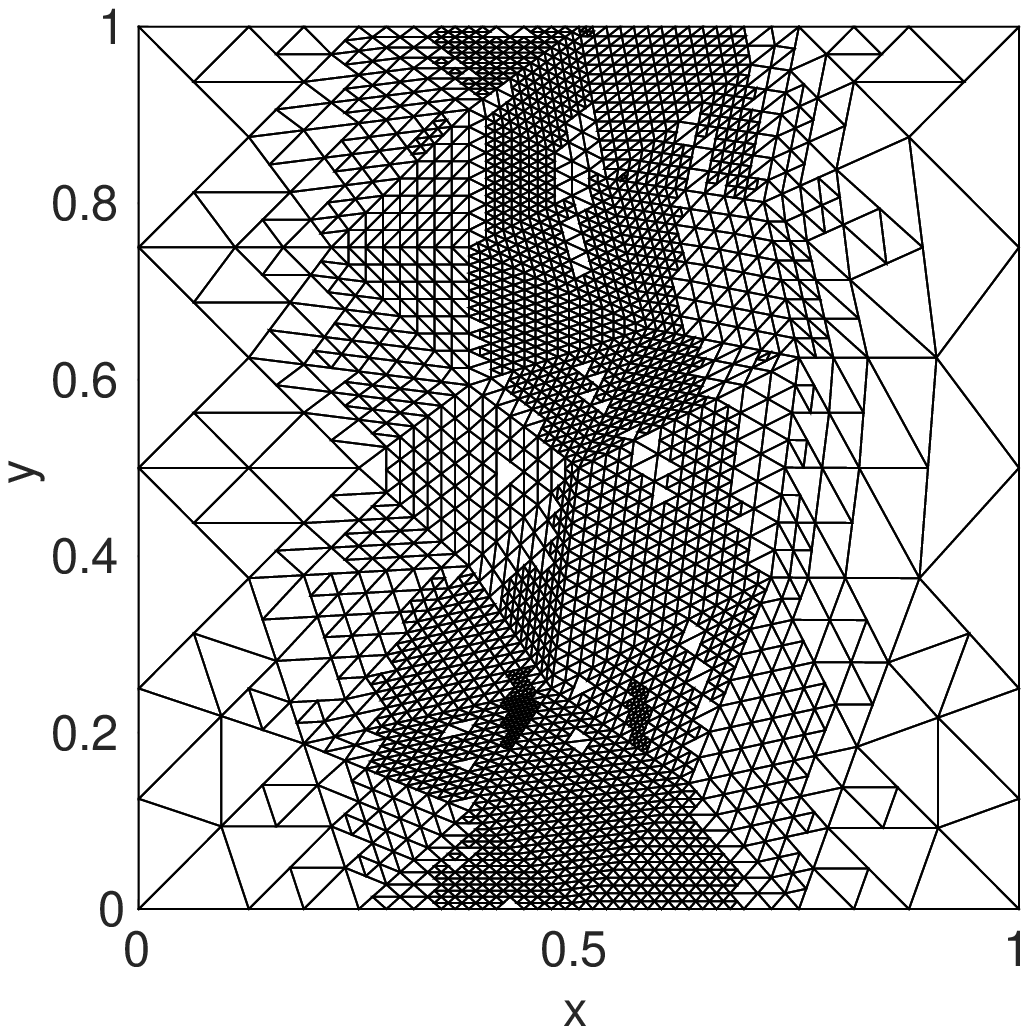}} \\
    \subfloat[Coarse ($hp$-refinement)]{\label{fig:square:mesh:hp:coarse}\includegraphics[width=0.45\textwidth]{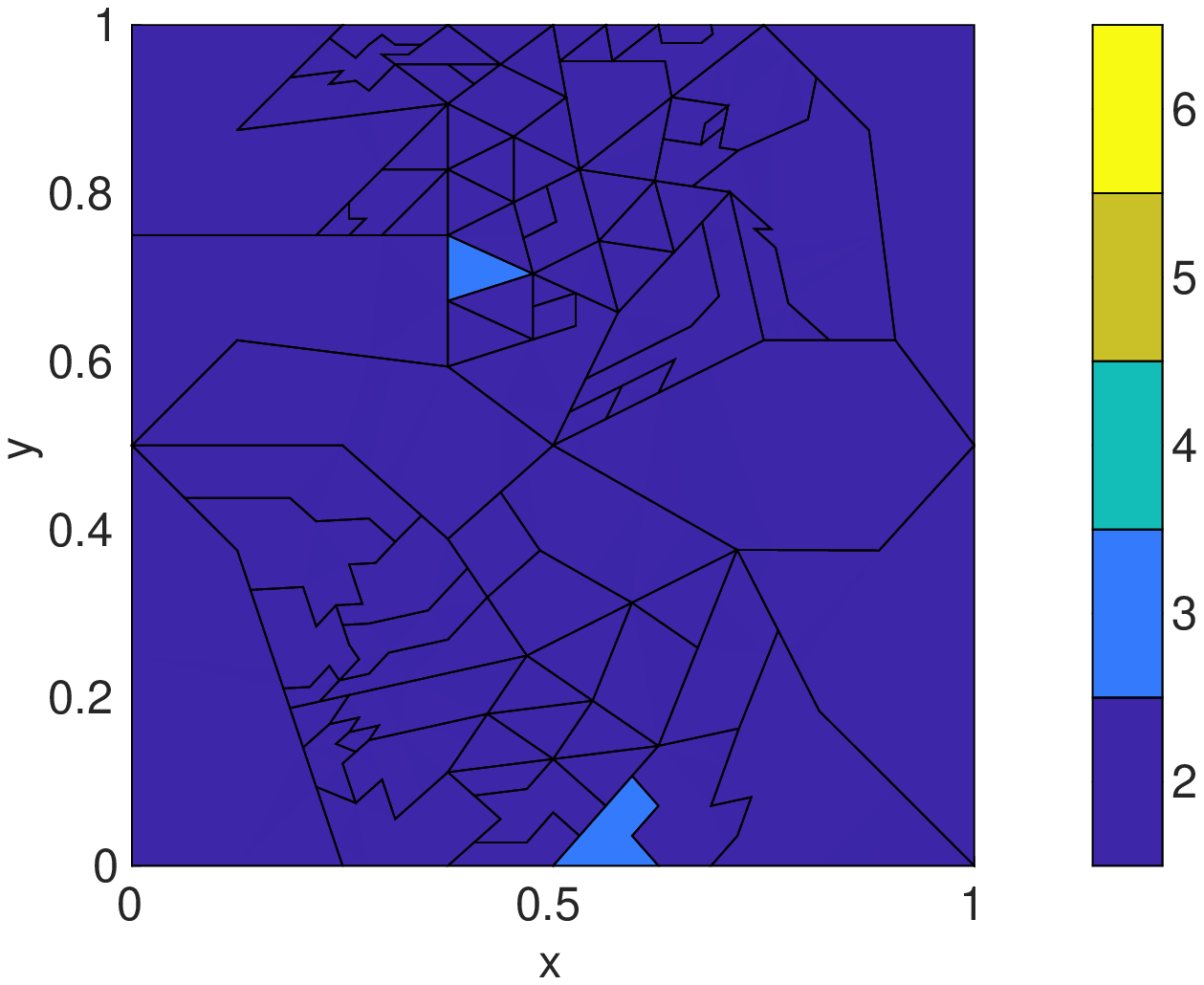}}
    \subfloat[Fine ($hp$-refinement)]{\label{fig:square:mesh:hp:fine}\includegraphics[width=0.45\textwidth]{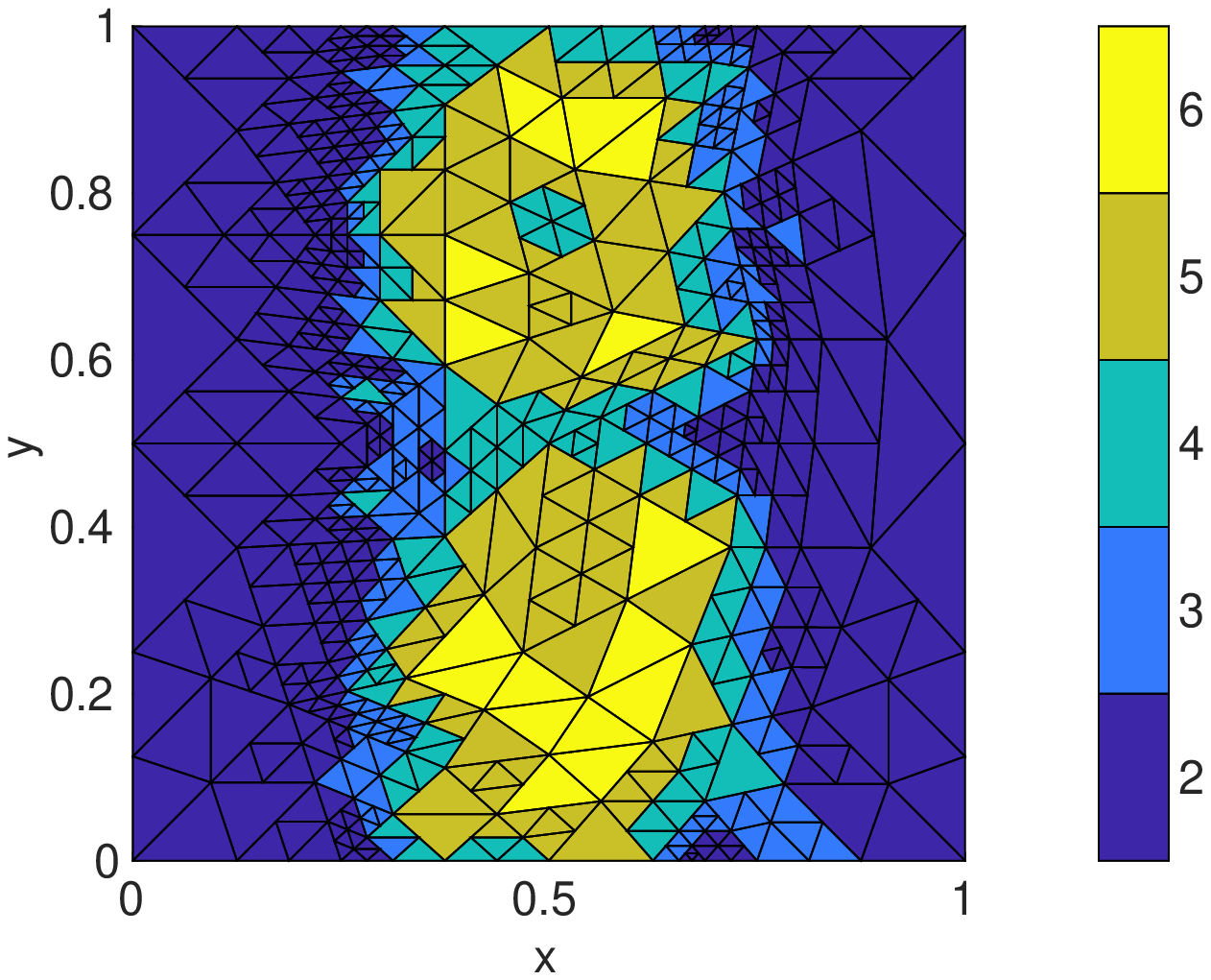}}
    \caption{Example 1. Coarse and fine meshes after $8$ \protect\subref{fig:square:mesh:h:coarse}--\protect\subref{fig:square:mesh:h:fine}~$h$- and \protect\subref{fig:square:mesh:hp:coarse}--\protect\subref{fig:square:mesh:hp:fine}~$hp$-adaptive mesh refinements, respectively}
    \label{fig:square:mesh}
\end{figure}

Finally, in \figref{fig:square:mesh} we show the fine and coarse meshes after 8 $h$- and $hp$-adaptive refinements for the two-grid method using the weighted, cf. \algoref{algo:errors}, coarse mesh refinement strategy, where the shading indicates the polynomial degree for the $hp$-refinement case. We notice that the refinement is concentrated around the `hills' in the analytical solution for both meshes, with mostly $p$-refinement in the interior, as would be expect when employing the standard DGFEM. We note considerably less refinement in the coarse $hp$-mesh compared to the fine one. 

\subsection{Example 2: Singular solution}
\begin{figure}[t]
    \configfigure
    \subfloat[]{\label{fig:lshape:err}\includegraphics[width=0.45\textwidth]{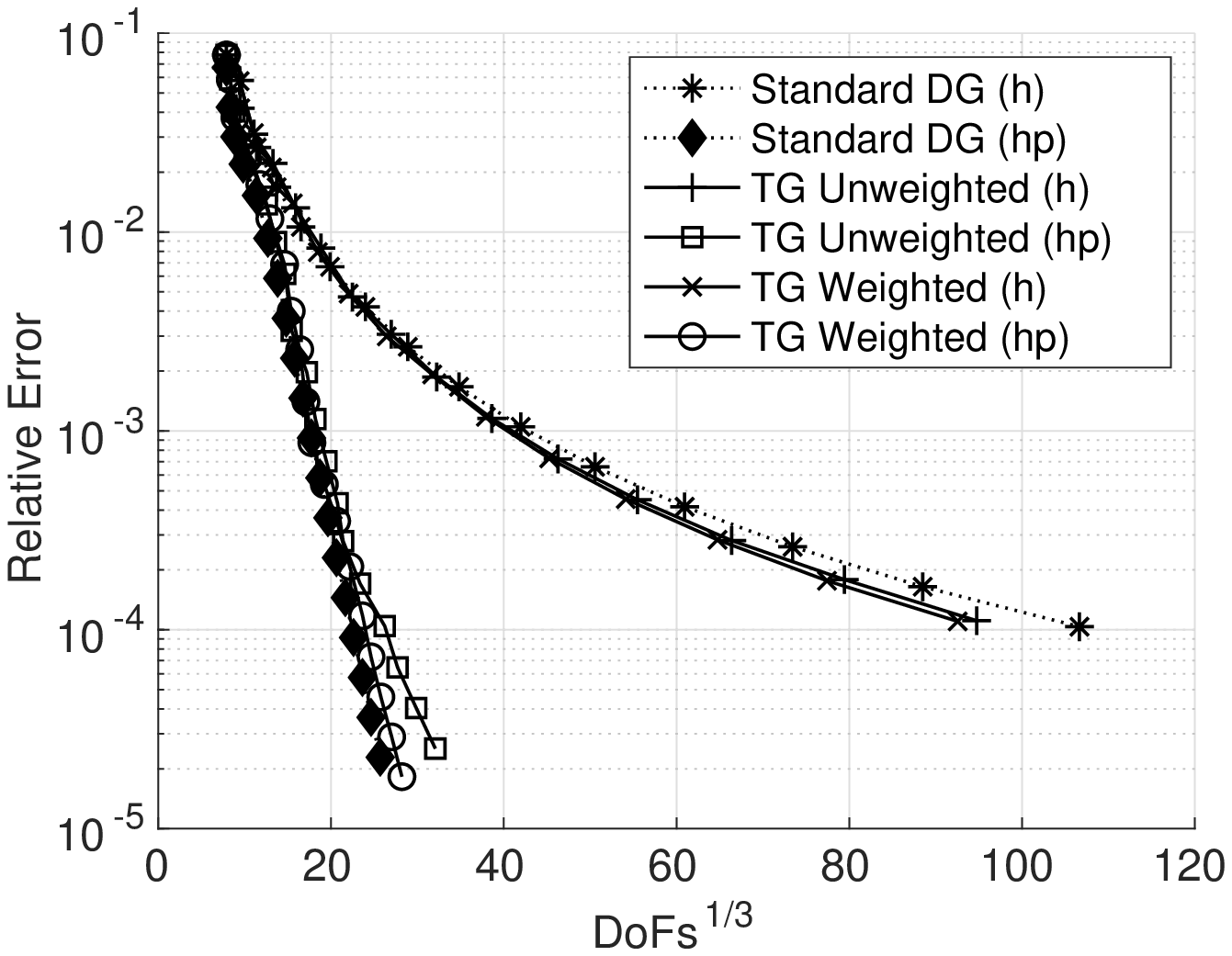}}
    \subfloat[]{\label{fig:lshape:eff}\includegraphics[width=0.45\textwidth]{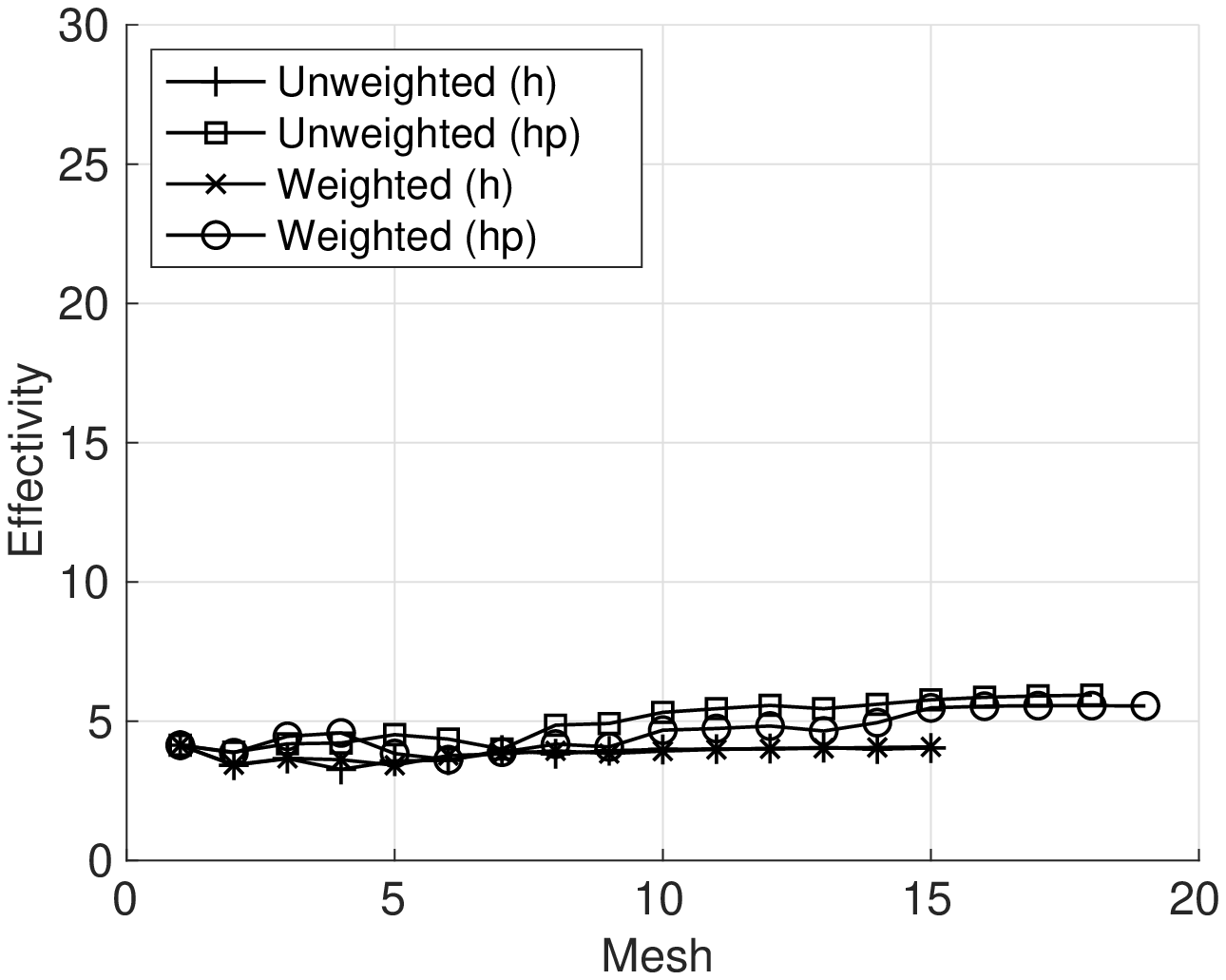}} \\
    \subfloat[]{\label{fig:lshape:dofs}\includegraphics[width=0.45\textwidth]{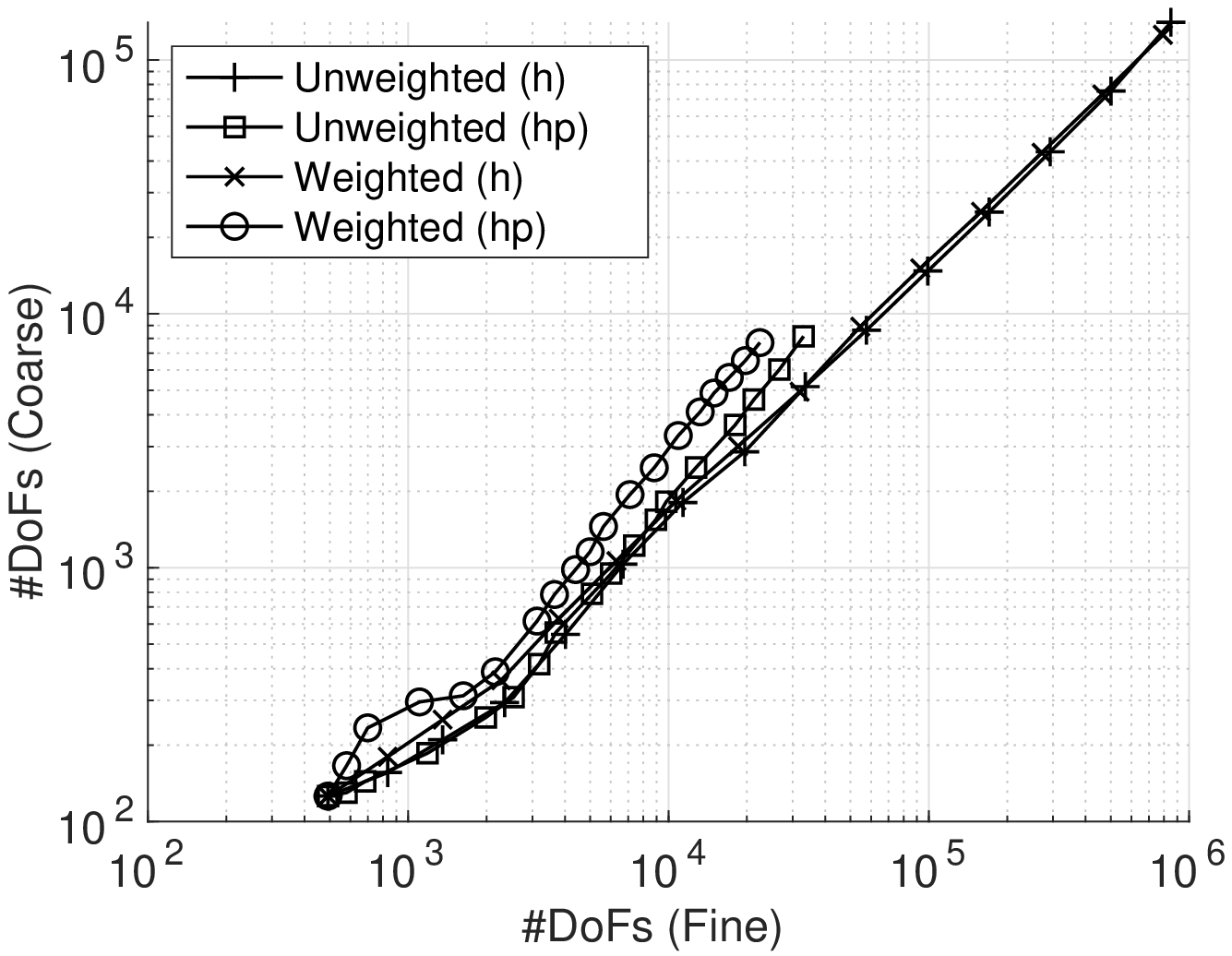}}
    \caption{Example 2. \protect\subref{fig:lshape:err}~Error in the DG norm with respect to the number of degrees of freedom for the standard method and the two-grid methods, using weighted and unweighted coarse mesh refinement, with $h$- and $hp$-refinement; \protect\subref{fig:lshape:eff}~Effectivity indices for the two-grid method with $h$- and $hp$-refinement; \protect\subref{fig:lshape:dofs}~Comparison of the number of degrees of freedom on the coarse and fine mesh for the two-grid methods}
    \label{fig:lshape}
\end{figure}

\begin{figure}[t]
    \configfigure
    \subfloat[$h$-refinement]{\label{fig:lshape:timing:h}\includegraphics[width=0.45\textwidth]{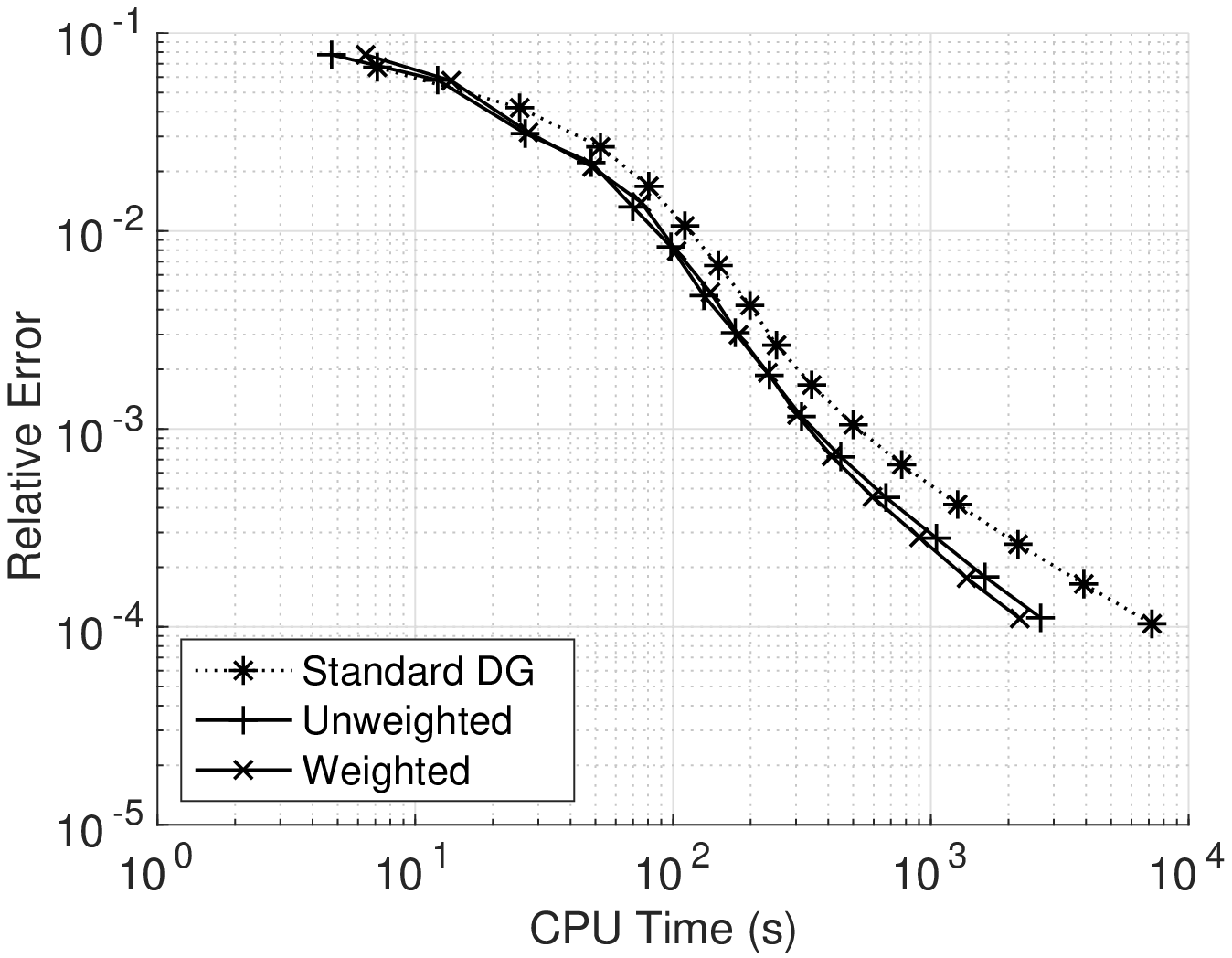}}
    \subfloat[$hp$-refinement]{\label{fig:lshape:timing:hp}\includegraphics[width=0.45\textwidth]{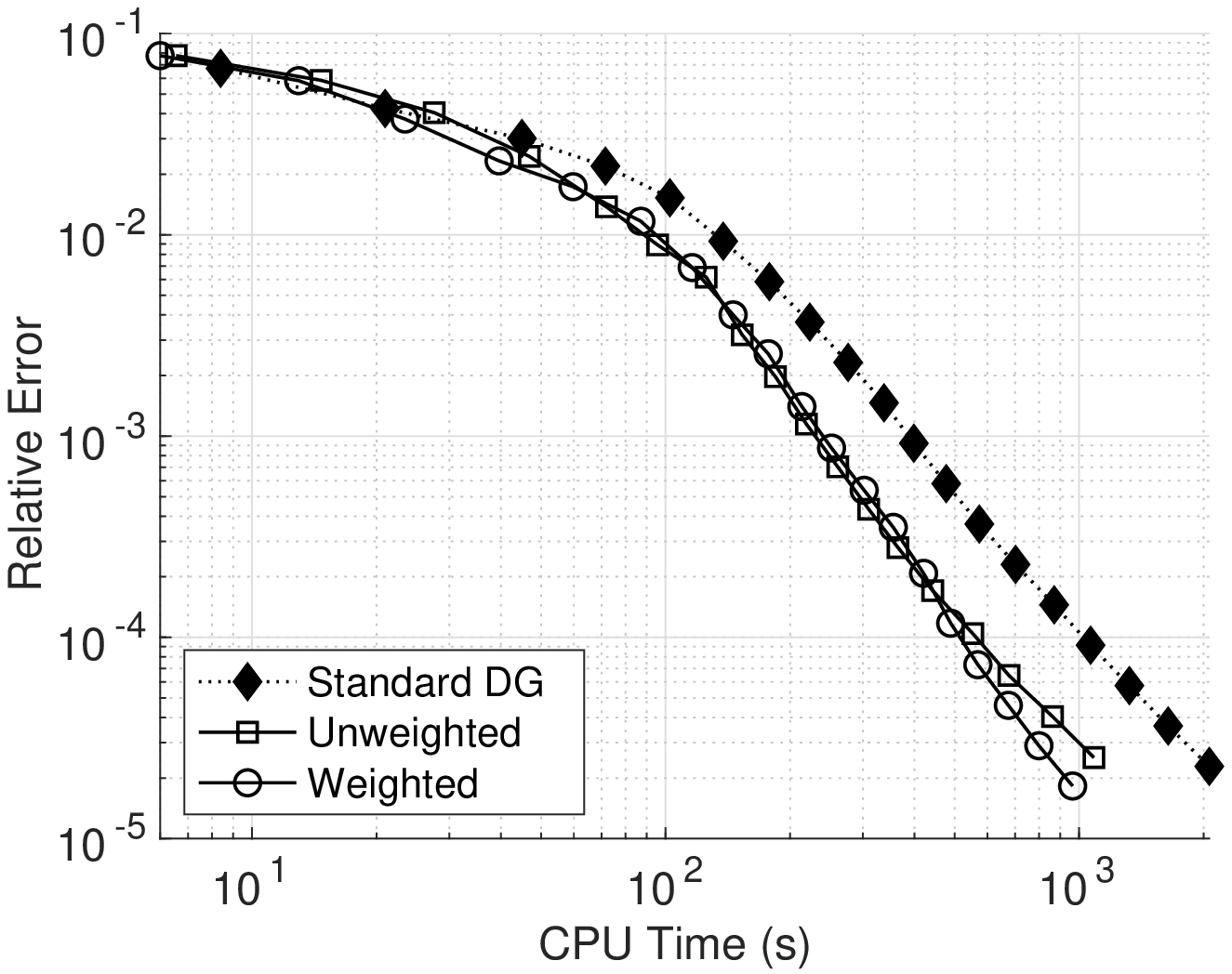}}
    \caption{Example 2. Error in the DG norm with respect to the cumulative CPU time for the standard method and the two-grid methods, using weighted and unweighted coarse mesh refinement, with \protect\subref{fig:lshape:timing:h}~$h$- and \protect\subref{fig:lshape:timing:hp}~$hp$-refinement}
    \label{fig:lshape:timing}
\end{figure}

In this example we consider the L-shaped domain $\Omega=(-1,1)^2\setminus[0,1)\times(-1,0]\subset\real^2$ and select the nonlinear coefficient to be
\[
\mu(\abs{\nabla u}) = 1 + \mathrm{e}^{-\abs{\nabla u}^2}.
\]
By writing $(r,\varphi)$ to denote the system of polar coordinates, we choose the forcing function $f$ and impose inhomogeneous boundary conditions such that the analytical solution to \eqref{eqn:quasilinear_eqn} is given by
\[
u(r,\varphi) = r^{\nicefrac23}\sin\left(\frac23\varphi\right).
\]
Note that $u$ is analytic in $\overline\Omega\setminus\{\vect{0}\}$, but $\nabla u$ is singular at the origin.

In \subfigref{fig:lshape}{err} we again present the comparison of the relative error measured in the DGFEM energy norm versus the third root of the number of degrees of freedom in the fine space $\dgspace$ for the standard formulation \eqref{eqn:standard_fem} and the two-grid formulation \eqref{eqn:tg_fem:coarse}--\eqref{eqn:tg_fem:fine} using both coarse mesh refinement strategies, \algoref{algo:naive} and \algoref{algo:errors}, when $h$- and $hp$-refinement is employed.  Here, we note that for $hp$-refinement the two two-grid methods again lead to a slight degradation in the error measured in the DGFEM norm, for a fixed number of degrees of freedom, when compared to the standard DGFEM. Additionally, we again observe that the two-grid DGFEM employing the weighted, cf. \algoref{algo:errors}, coarse mesh refinement strategy performs slightly better than the corresponding scheme exploiting the unweighted, cf. \algoref{algo:naive}, strategy. In the $h$-refinement setting, we actually observe the opposite behaviour: namely, that the two two-grid methods lead to a reduction in the error computed in the DGFEM norm, for a fixed number of degrees of freedom, when compared to the standard DGFEM, which is quite unexpected.
\subfigref{fig:lshape}{eff} again shows the effectivity indices for both two-grid refinement strategies using $h$- and $hp$-refinement; here, we observe that they are almost constant for all meshes indicating that the \emph{a posteriori} error bound overestimates the true error in a roughly consistent manner. \subfigref{fig:lshape}{dofs} again shows the coarse space degrees of freedom increasing at a slower rate compared to the corresponding quantity for the fine space for both two-grid DGFEMs employing either $h$- or $hp$-mesh refinement strategies; indeed, both methods result in a broadly similar number of coarse space degrees of freedom, although with slightly more coarse space degrees of freedom in the weighted $hp$-refinement case.

In \figref{fig:lshape:timing} we again compare the relative error computed in the DGFEM energy norm against the cumulative computation time for the standard DGFEM and both two-grid methods utilizing weighted and unweighted refinement of the coarse space. While we again notice a reduction in the DGFEM norm of error, for a given fixed computation time, when the two two-grid methods are employed compared to the standard DGFEM, this reduction is smaller than observed for the first example.

\begin{figure}[t!]
    \configfigure
    \subfloat[Coarse ($h$-refinement)]{\label{fig:lshape:mesh:h:coarse}\includegraphics[width=0.45\textwidth]{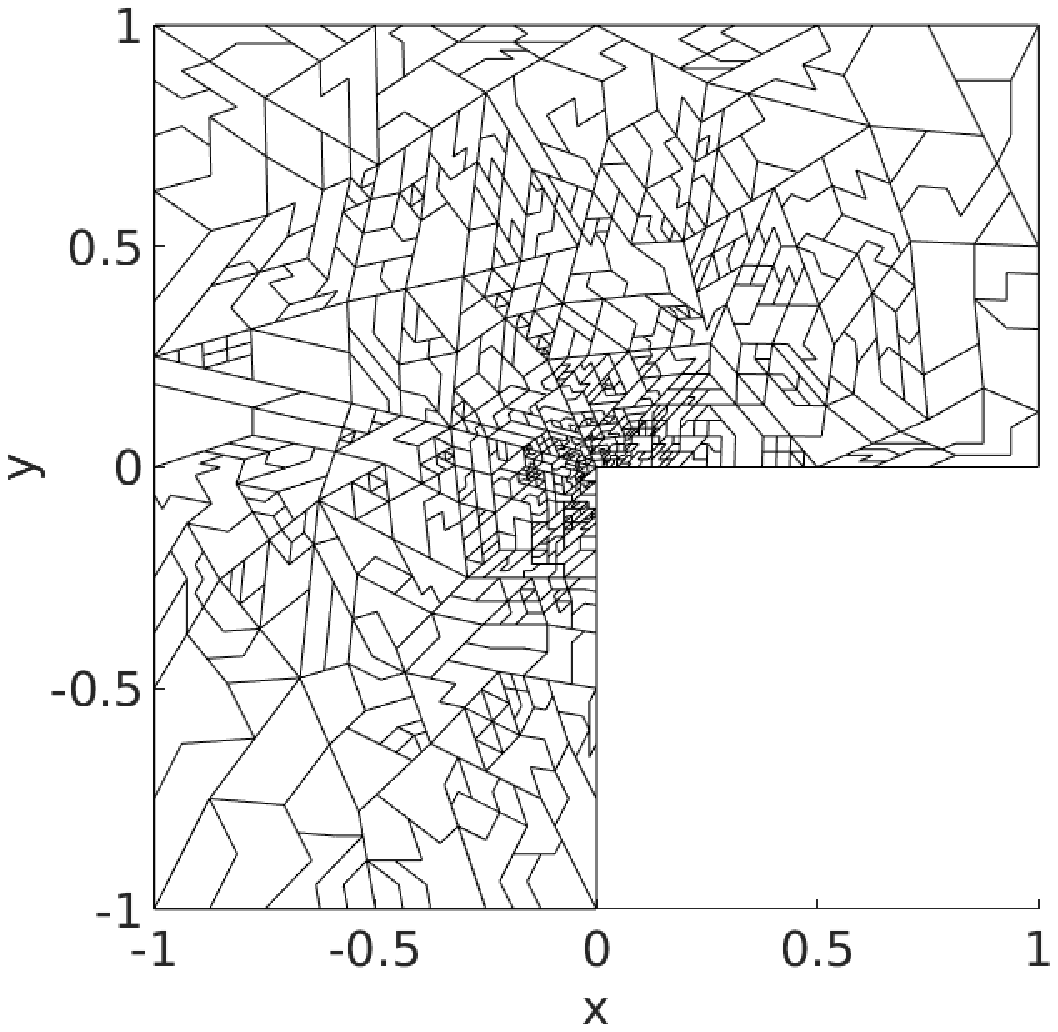}}
    \subfloat[Fine ($h$-refinement)]{\label{fig:lshape:mesh:h:fine}\includegraphics[width=0.45\textwidth]{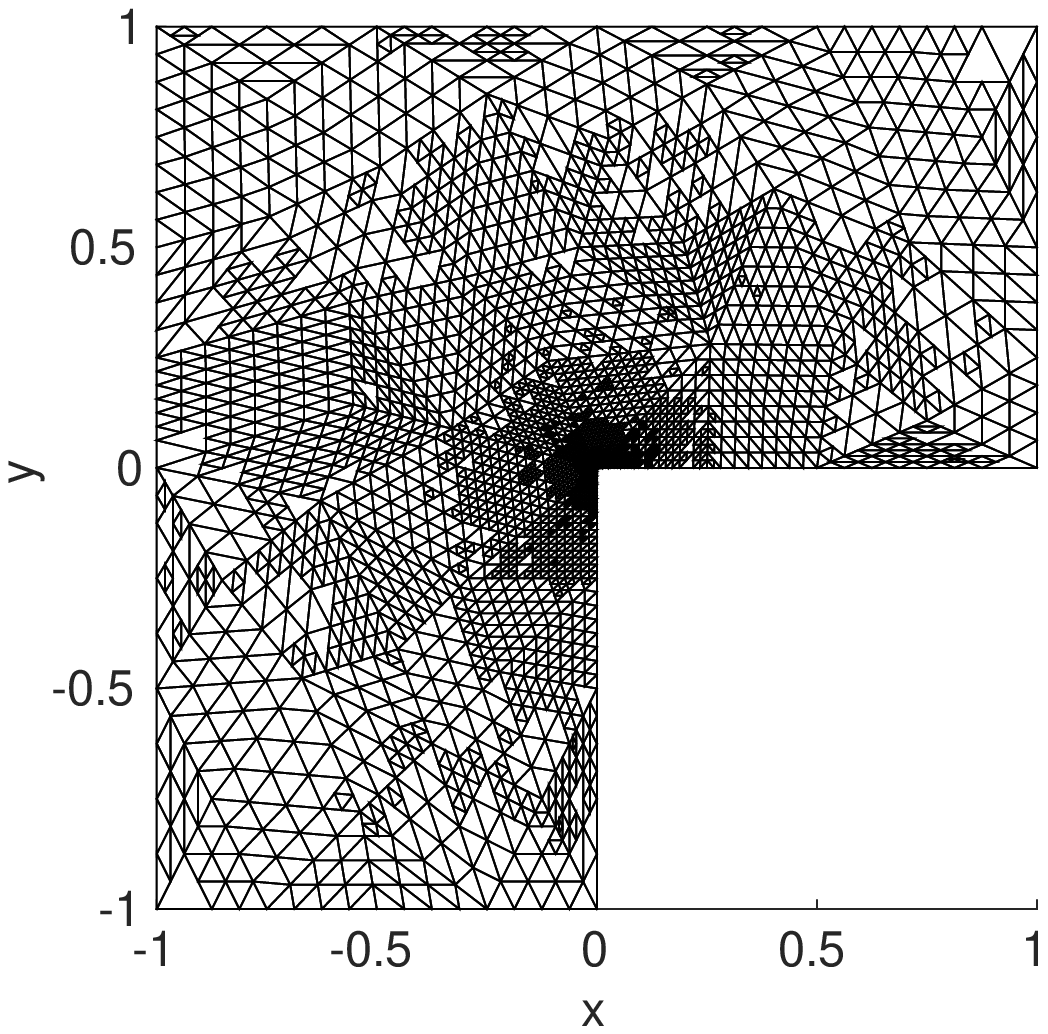}} \\
    \subfloat[Coarse ($hp$-refinement)]{\label{fig:lshape:mesh:hp:coarse}\includegraphics[width=0.45\textwidth]{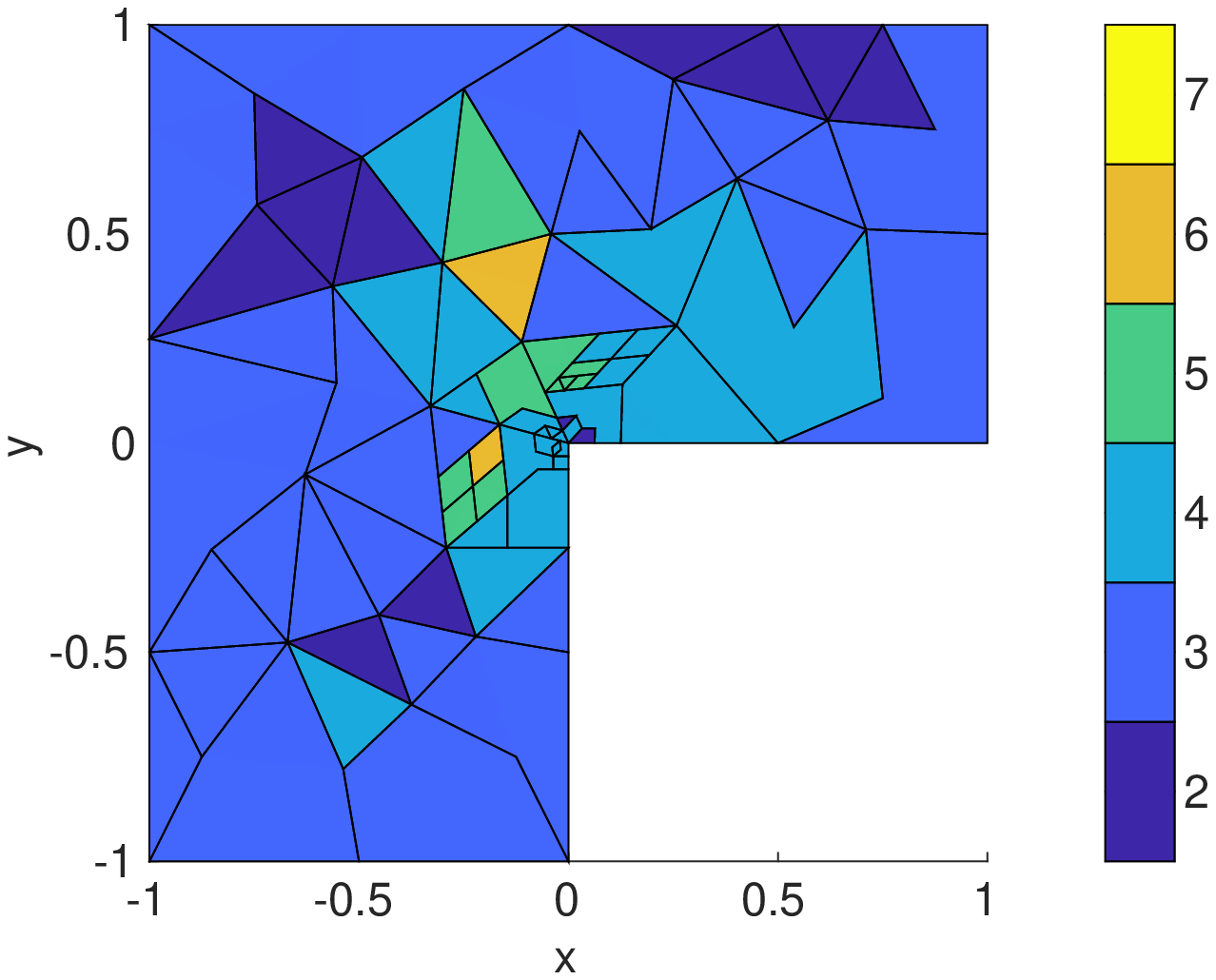}}
    \subfloat[Fine ($hp$-refinement)]{\label{fig:lshape:mesh:hp:fine}\includegraphics[width=0.45\textwidth]{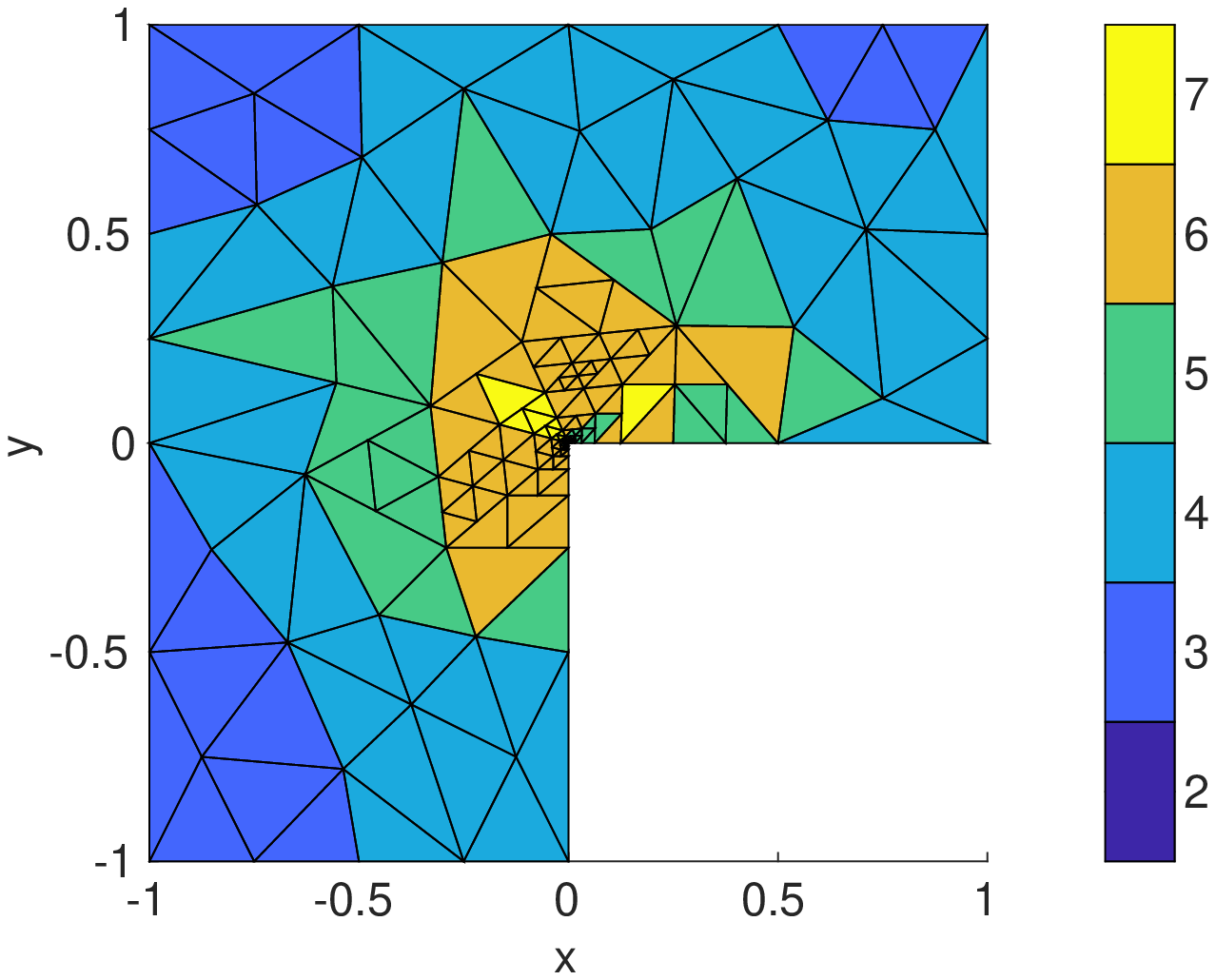}}
    \caption{Example 2. Coarse and fine meshes after $8$ \protect\subref{fig:lshape:mesh:h:coarse}--\protect\subref{fig:lshape:mesh:h:fine}~$h$- and \protect\subref{fig:lshape:mesh:hp:coarse}--\protect\subref{fig:lshape:mesh:hp:fine}~$hp$-adaptive mesh refinements, respectively}
    \label{fig:lshape:mesh}
\end{figure}

In \figref{fig:lshape:mesh} we show the coarse and fine meshes after 8 $h$- and $hp$-adaptive mesh refinements for the two-grid method using the weighted \algoref{algo:errors} coarse mesh refinement strategy. Here, we notice that for both coarse and fine meshes that the $h$-refinement is concentrated around the singularity at the re-entrant corner, with bands of $p$-refinement around this. We notice considerably more refinement on the coarse mesh compared with the previous example, caused by the method needing to resolve the singularity on both meshes.

\subsection{Example 3: 3D singular solution}
\begin{figure}[t!]
    \configfigure
    \subfloat[]{\label{fig:fichera:err}\includegraphics[width=0.45\textwidth]{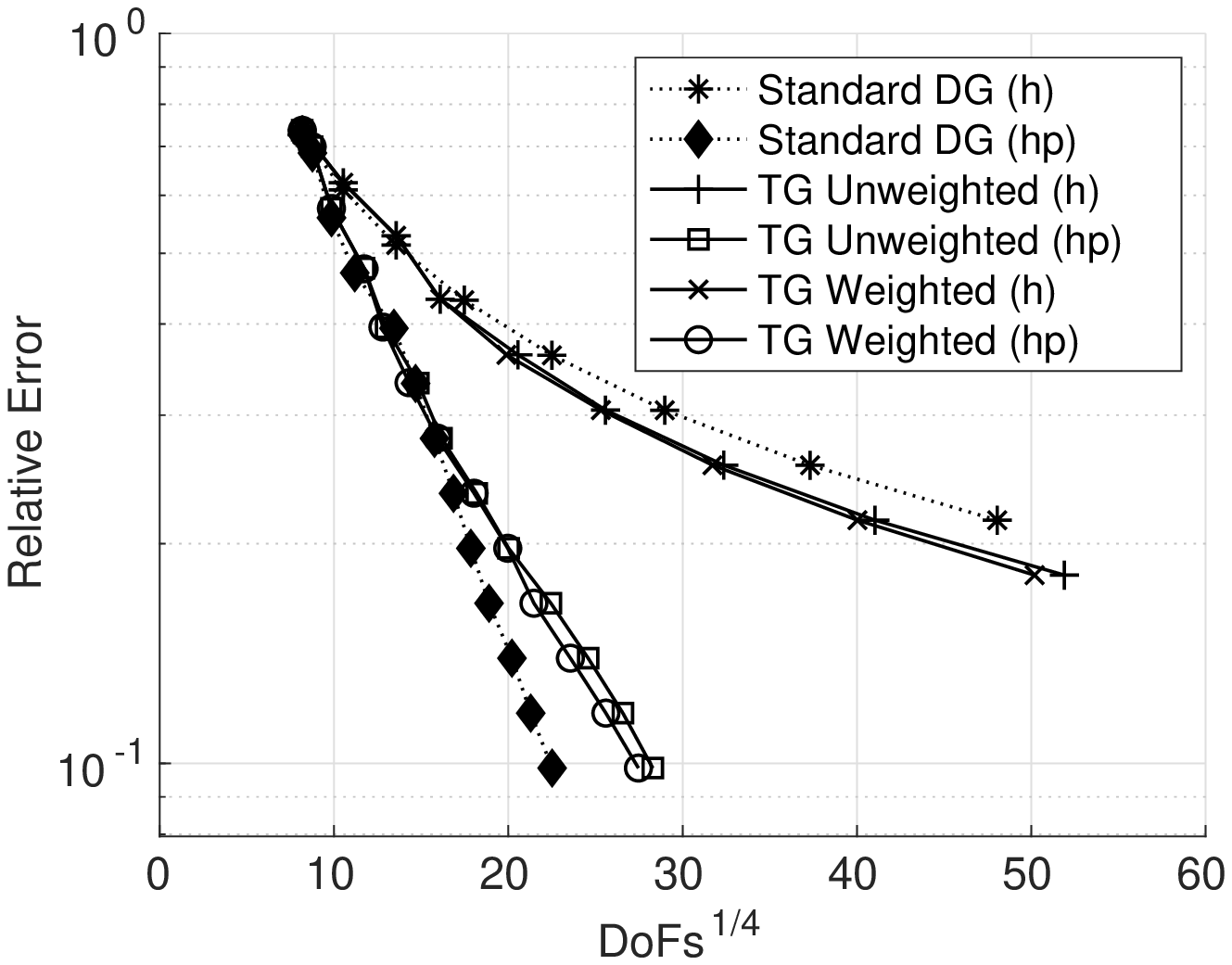}}
    \subfloat[]{\label{fig:fichera:eff}\includegraphics[width=0.49\textwidth]{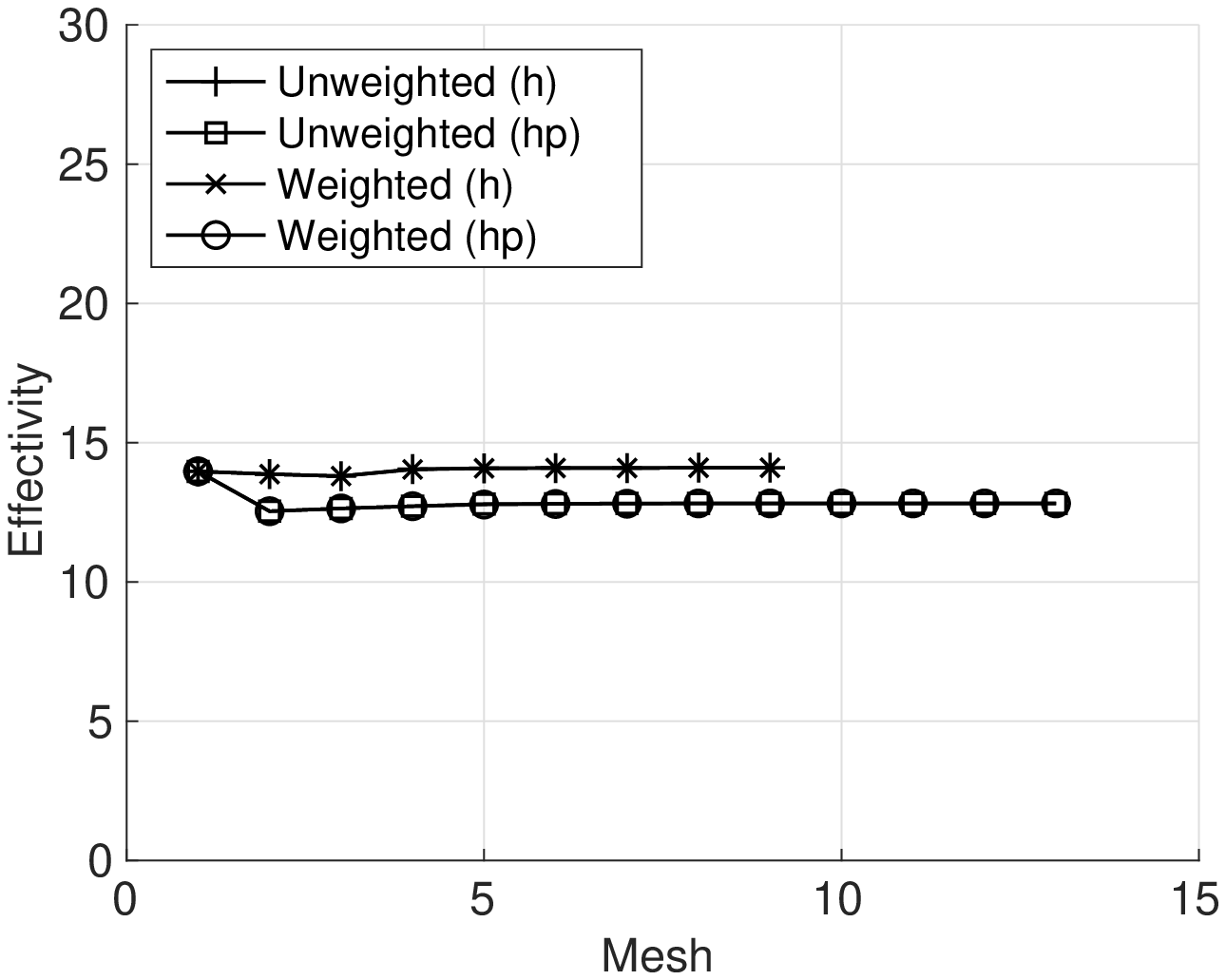}} \\
    \subfloat[]{\label{fig:fichera:dofs}\includegraphics[width=0.49\textwidth]{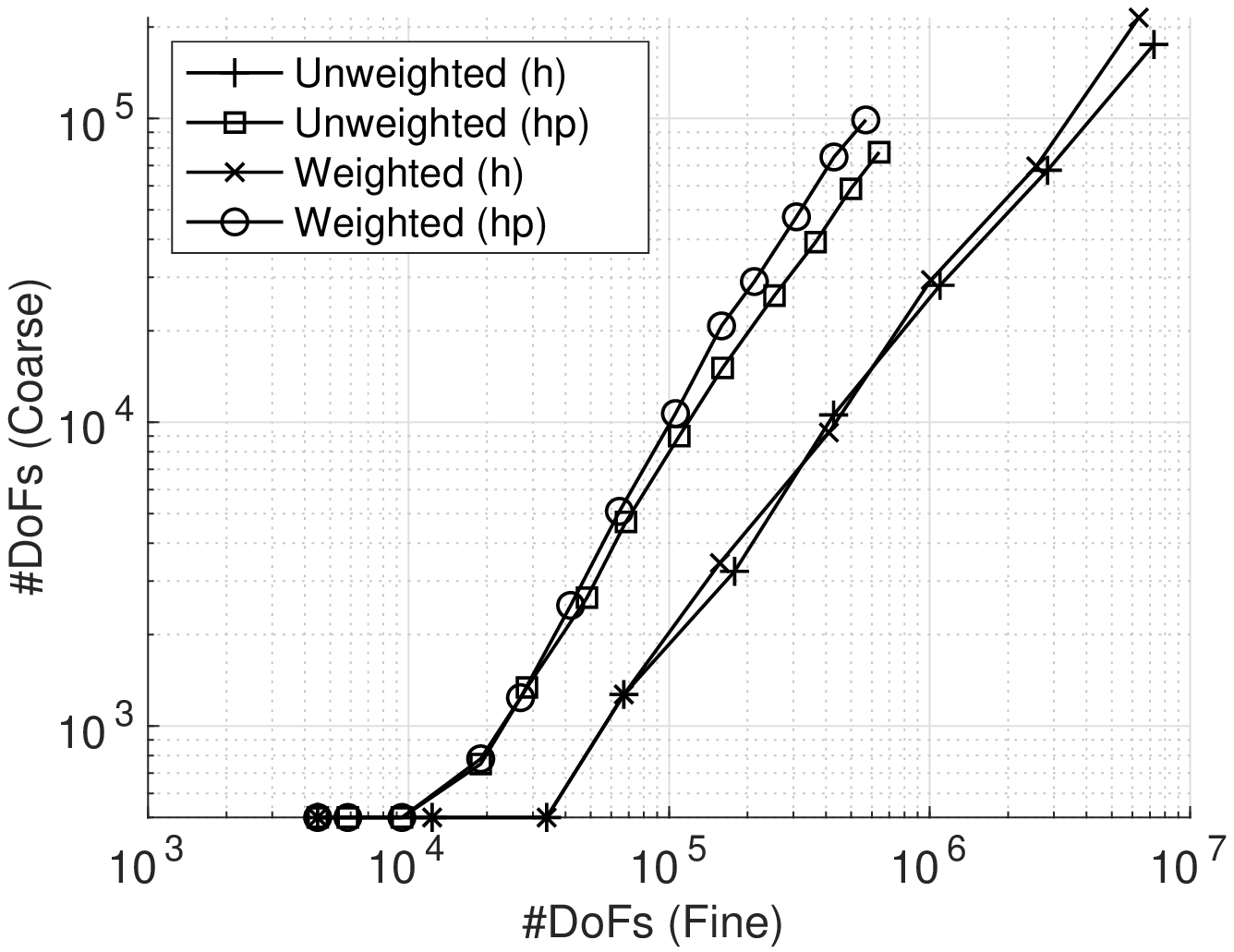}}
    \caption{Example 3. \protect\subref{fig:fichera:err}~Error in the DG norm with respect to the number of degrees of freedom for the standard method and the two-grid methods, using weighted and unweighted coarse mesh refinement, with $h$- and $hp$-refinement; \protect\subref{fig:fichera:eff}~Effectivity indices for the two-grid method with $h$- and $hp$-refinement; \protect\subref{fig:fichera:dofs}~Comparison of the number of degrees of freedom on the coarse and fine mesh for the two-grid methods}
    \label{fig:fichera}
\end{figure}

Finally, we consider a three-dimensional problem; to this end, we let $\Omega$ be the Fichera corner $(-1,1)^3\setminus[0,1)^3\subset\real^3$, use the nonlinearity \eqref{eqn:simple_mu} from the first example and select $f$ and a suitable inhomogeneous boundary condition such that the analytical solution to \eqref{eqn:quasilinear_eqn} is given by
\[
u(x,y,z) = (x^2+y^2+z^2)^{\nicefrac{q}2},
\]
where $q\in\real$. From \cite{Beilina} we note that for $q\geq-\nicefrac12$ the solution satisfies $u\in H^1(\Omega)$; in this case we select $q=-\nicefrac14$ as in \cite{Zhu}. We note that this gives a singularity at the re-entrant corner.

In \subfigref{fig:fichera}{err} we compare the relative error measured in the DGFEM norm with the fourth root, cf. \cite{Zhu}, of the number of degrees of freedom in $\dgspace$ for each of three methods considered in the previous examples, when $h$- or $hp$-refinement is employed.  We again notice that for $hp$-refinement we obtain exponential convergence, with a slightly slower rate when the two-grid methods are employed compared to the standard DGFEM; in the $h$-refinement setting the two two-grid methods lead to a reduction in the computed error, for a fixed number of degrees of freedom, when compared the standard DGFEM. \subfigref{fig:fichera}{eff} confirms that the \emph{a posteriori} error estimate again overestimates the error by a consistent amount in the sense that the effectivity indices for the two-grid methods employing both coarse mesh refinement strategies are roughly constant. Again, we observe that the coarse number of degrees of freedom grows slower than the number of degrees of freedom present in fine space, with a broadly similar number of degrees of freedom for both coarse mesh refinement strategies, cf. \subfigref{fig:fichera}{dofs}.

\begin{figure}[t!]
    \configfigure
    \subfloat[$h$-refinement]{\label{fig:fichera:timing:h}\includegraphics[width=0.45\textwidth]{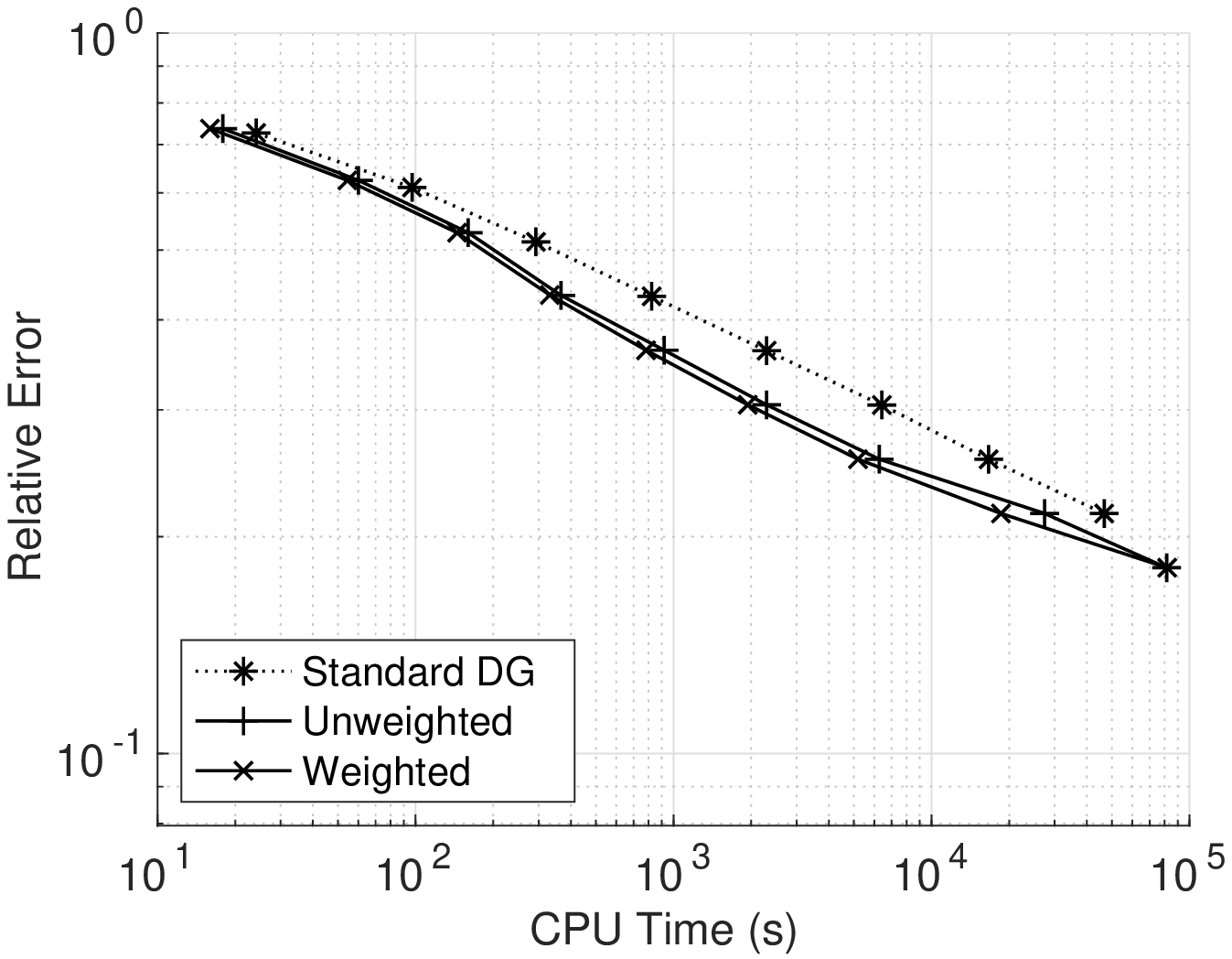}}
    \subfloat[$hp$-refinement]{\label{fig:fichera:timing:hp}\includegraphics[width=0.45\textwidth]{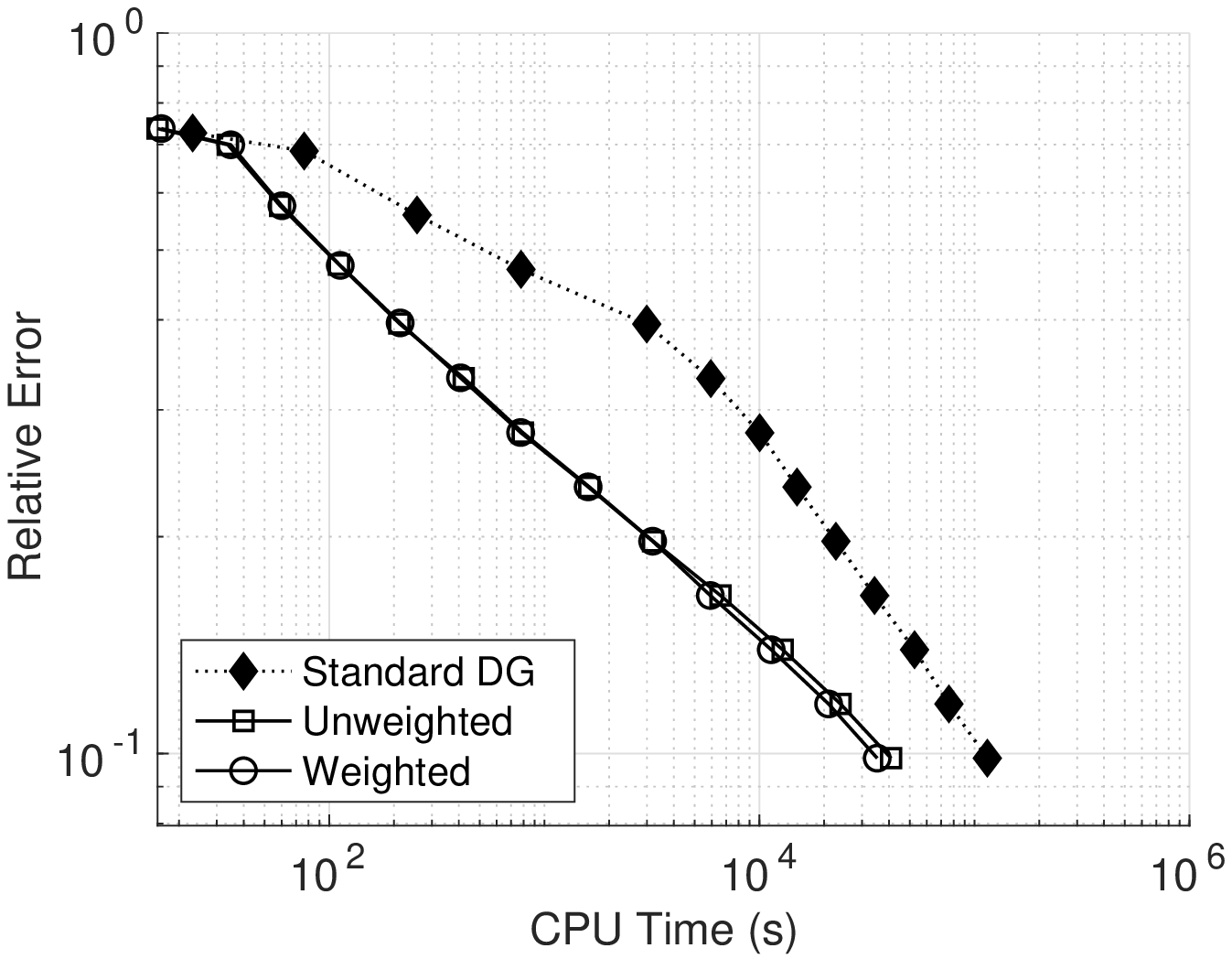}}
    \caption{Example 3. Error in the DG norm with respect to the cumulative CPU time for the standard method and the two-grid methods, using weighted and unweighted coarse mesh refinement, with \protect\subref{fig:fichera:timing:h}~$h$- and \protect\subref{fig:fichera:timing:hp}~$hp$-refinement}
    \label{fig:fichera:timing}
\end{figure}

We finally compare the relative error measured in the energy norm against the cumulative computation time taken for the standard DGFEM and the two two-grid methods employing both coarse mesh refinement strategies, for $h$- and $hp$-refinement; cf. \figref{fig:fichera:timing}. As for the previous example we notice a small reduction in the error for a fixed computation time when the two grid methods are employed compared with the standard DGFEM.

\section{Concluding remarks} \label{sec:concluding_remarks}

In this article, we have extended previous work on two-grid $hp$-version DGFEMs for the numerical approximation of second-order quasilinear boundary value problems of monotone type to the situation when general coarse meshes containing polytopic elements constructed by the agglomeration of fine mesh elements are employed. In particular, we have developed the \emph{a priori} error analyis for the polytopic coarse mesh approximation and developed algorithms for $hp$-adaptive refinement of the coarse mesh elements based on a computable \emph{a posteriori} error bound. This leads to fully adaptive black-box solver which can be used for the numerical approximation of nonlinear PDEs in an efficient manner. Indeed, our numerical experiments have highlighted that the computed error in the proposed two-grid method is generally similar in magnitude to the corresponding quantity computed based on employing a standard DGFEM formulation; however, the need to only solve a nonlinear system of equations on the coarse finite element space, with only a linear problem computed on the fine space, leads to significant reductions in the overall computation time when the former approach is employed. We have also shown that by weighting the refinement of the coarse mesh elements by the localized \emph{a posteriori} error indicators defined on the submesh partition that forms the coarse element, we are able to reduce the error compared to both the number of  degrees of freedom in the fine finite element space and the overall computation time. Further extensions of this work include the application to PDE problems with coefficients containing more general nonlinearities.

\section*{Acknowledgments}

SC has been supported by Charles University Research program No. UNCE/SCI/023 and the Czech Science Foundation (GA\v{C}R) project No. 20-01074S. PH acknowledges the financial support of the EPSRC under the grant EP/R030707/1.

\bibliographystyle{abbrv}
\bibliography{references}

\end{document}